\documentclass[10pt]{amsart}
\usepackage{amsmath}
\usepackage{amssymb}
\usepackage{amsfonts}
\usepackage{amsthm}
\usepackage{amsxtra}
\usepackage{fontenc}
\usepackage{fullpage}
\usepackage{hyperref}

\usepackage{xcolor} 
\usepackage{cancel} 
\usepackage{bbm} 
\theoremstyle{plain}
\newtheorem{theorem}{Theorem}
\newtheorem{corollary}{Corollary}

\newtheorem{conjecture}{Conjecture}
\theoremstyle{definition}

\theoremstyle{remark}
\newtheorem{remark}{Remark}[section]

\newcommand{\half}{
        {\lower0.00ex\hbox{\raise.6ex\hbox{\the\scriptfont0 1}
                           \kern-.5em\slash\kern-.1em\lower.45ex
                                     \hbox{\the\scriptfont0 2}}}}
\newcommand{\quarter}{
        {\lower0.00ex\hbox{\raise.6ex\hbox{\the\scriptfont0 1}
                           \kern-.5em\slash\kern-.1em\lower.45ex
                                     \hbox{\the\scriptfont0 4}}}}
\newcommand{\tquarter}{
        {\lower0.00ex\hbox{\raise.6ex\hbox{\the\scriptfont0 3}
                           \kern-.5em\slash\kern-.1em\lower.45ex
                                     \hbox{\the\scriptfont0 4}}}}
\newcommand{\eighth}{
        {\lower0.00ex\hbox{\raise.6ex\hbox{\the\scriptfont0 1}
                           \kern-.5em\slash\kern-.1em\lower.45ex
                                     \hbox{\the\scriptfont0 8}}}}
\newcommand{\othird}{
        {\lower0.00ex\hbox{\raise.6ex\hbox{\the\scriptfont0 1}
                           \kern-.5em\slash\kern-.1em\lower.45ex
                                     \hbox{\the\scriptfont0 3}}}}

\DeclareMathOperator{\Tr}{Tr}

\begin{document}

\title{Moments of the Gaussian $\beta$ Ensembles and the large-$N$ expansion of the densities}


\author{N.S.~Witte and P.J.~Forrester}
\address{Department of Mathematics and Statistics,
University of Melbourne, Parkville, Victoria 3010, Australia}
\email{\tt N.Witte@ms.unimelb.edu.au; P.Forrester@ms.unimelb.edu.au}

\begin{abstract}
The loop equation formalism is used to compute the $1/N$ expansion of the resolvent for
the Gaussian $\beta$ ensemble up to and including the term at $O(N^{-6})$. This allows
the moments of the eigenvalue density to be computed up to and including the
12-th power and the smoothed density to be expanded up to and including the term at $O(N^{-6})$.
The latter contain non-integrable singularities at the endpoints of the support --- we show
how to nonetheless make sense of the average of a sufficiently smooth linear statistic.
At the special couplings $\beta = 1$, $2$ and $4$ there are characterisations
of both the resolvent and the moments which allows for the corresponding expansions to be extended,
in some recursive form at least, to arbitrary order. In this regard we give fifth order linear differential
equations for the density and resolvent at $\beta = 1$ and $4$, which complements the known
third order linear differential equations for these quantities at $\beta = 2$.
\end{abstract}

\subjclass[2010]{15B52; 60K35; 62E13; 33C45}
\maketitle


\section{Introduction}\label{Introduction}
\setcounter{equation}{0}

One of the most active topics in random matrix theory at present is the study of the
$\beta$-ensembles. Motivations come from varying viewpoints, including universality,
integrability, asymptotics and applications to matrix models and field theories.
In applications to  matrix models and field theories the average
\begin{equation}
  \left \langle \Tr G^{2k} \right \rangle_{G \in {\rm GUE}^{*}} ,\quad k\in \mathbb{Z}_{\geq 0} ,
\label{muA_GUEstar}
\end{equation}
features prominently. Here GUE$^{*}$ ($ \beta=2 $) denotes the set of $ N \times N $ GUE matrices each
multiplied by $1/\sqrt{2N}$, where this scaling is chosen so that the leading order support of the eigenvalues is $(-1,1)$. The large $N$ asymptotics
of \eqref{muA_GUEstar} is one of the earliest examples of a topological expansion in the theory of
matrix integrals \cite{BIPZ_1978, Zv_1997, DiF_2001, LZ_2004}. This expansion
counts maps on surfaces of definite genus, or equivalently the number $c(g;k)$ of pairings of vertices of
a regular $2k$-gon which corresponds to a surface of genus $g$. Thus purely combinatorial reasoning gives
\begin{equation}
\frac{1}{N} \left \langle \Tr G^{2k} \right \rangle_{{\rm GUE}^{*}}
 = \sum_{g=0}^{[k/2]} \frac{ c(g;k)}{N^{2g}} .
\label{TRG_combinatoric}
\end{equation}
It is furthermore the case that
the average $ \langle \Tr G^{2k} \rangle $ with respect to  GOE$^{*}$ ($ \beta=1 $) or
GSE$^{*}$ ($ \beta=4 $) (see (\ref{GbetaE}) below for the precise definitions) also admits a combinatorial interpretation,
with the corresponding coefficients again being related to certain maps \cite{MW_2003}.

A primary concern of the present work is the analysis and generation of the large $N$ expansion
of the average $\langle {\rm Tr} \, G^{2k} \rangle$, where the matrices $G$ belong to the
Gaussian $\beta$-ensemble G$\beta$E$^{*}$. For general $\beta > 0$ such matrices can be defined
as real symmetric matrices with independent entries (see e.g.~\cite{rmt_Fo}), but since
Tr$\, G^{2k} = \sum_{m=1}^N \lambda_m^{2k}$, for present purposes it suffices to specify
the eigenvalue probability density function. This is proportional to
\begin{equation}\label{GbetaE}
   \prod^{N}_{l=1} e^{-\beta N\lambda_l^2} \prod_{1\leq j<k \leq N}|\lambda_k-\lambda_j|^{\beta} ,
\end{equation}
As in the notation GUE$^{*}$, which corresponds to the case $\beta = 2$, the asterisk
in G$\beta$E$^{*}$ denotes the particular scaling of the eigenvalues that gives the leading order eigenvalue density
supported on $(-1,1)$. There has been recent interest in this $\beta$ generalized moment
from the viewpoint of topological expansions of matrix models \cite{BH_2009, BEMS_2011, BEMP-F_2012, Mo_2012}.

More general than the calculation of the moments is the problem of computing the
asymptotic smoothed signed densities.
Consider a linear statistic of the eigenvalue, which refers to a function of the form
$A = \sum_{j=1}^N a(\lambda_j)$.
Let $ \rho^{N}_{(1)}(\lambda;\beta) $ denote the eigenvalue
density, which has the defining property that
$ \int^{b}_{a} \rho^{N}_{(1)}(\lambda;\beta) d\lambda $ is equal to the expected
number of eigenvalues in the interval $ [a,b] $. The mean $ \mu_{N}[A] $ of the linear statistic
$ A $ is given in terms of the eigenvalue density according to
\begin{equation}\label{Aa}
\mu_{N,\beta}[A] = \int_{-\infty}^\infty a(\lambda)  \rho^{N}_{(1)}(\lambda;\beta) \, d \lambda.
\end{equation}
Performing a large $N$ expansion analogous to (\ref{TRG_combinatoric}) allows the
asymptotic smoothed signed densities\\ \hfil
$\{ \tilde{\rho}_{(1),g}(\lambda;\beta)\}_{g=0,1,\dots} $ to be defined according to
\begin{equation}
   \frac{1}{N}\mu_{N,\beta}[A] = \sum^{\infty}_{g=0} \frac{1}{N^g} \int^{\infty}_{-\infty}a(\lambda)
   \tilde{\rho}_{(1),g}(\lambda;\beta) d\lambda .
\label{largeNmu}
\end{equation}
\noindent
In the case of $ \beta=2 $ \eqref{largeNmu} is known rigorously from the work of Ercolani and
McLaughlin \cite{EM_2003} and from Haagerup and Thorbj{\o}rnsen \cite{HT_2012},
and in fact all the terms with $ g $ odd vanish. For general $ \beta $
 a rigorous demonstration of \eqref{largeNmu} can be found
in the recent work \cite{BG_2013}. Our determination of $ \{ \tilde{\rho}_{(1),g}(\lambda;\beta) \} $
assumes the existence of the large $ N $ expansion \eqref{largeNmu}.

The best known result relating to \eqref{largeNmu}, which requires certain technical assumptions
on $ a(\lambda) $, is the limit theorem (see e.g. \cite{PS_2011})
\begin{equation}
    \lim_{N\to \infty} \frac{1}{N} \mu_{N}[A] = \int^{\infty}_{-\infty} a(\lambda)
    \tilde{ \rho}_{(1),0}(\lambda;\beta) \, d\lambda ,
\end{equation}
where, with $ \chi_{\lambda \in J}=1 $ for $ \lambda\in J $ and $ \chi_{ \lambda \in J}=0 $ otherwise,
\begin{equation}
     \tilde{ \rho}_{(1),0}(\lambda;\beta)   = \frac{2}{\pi} \sqrt{1-\lambda^2} \chi_{\lambda \in (-1,1)},
\label{wignerSC}
\end{equation}
this being the celebrated Wigner semi-circle law.
Note that $ \tilde{ \rho}_{(1),0}(\lambda;\beta) $ is  not dependent on $\beta$.

Let us illustrate our method of determination of
$ \tilde{\rho}_{(1),g}(\lambda;\beta) $ by deriving \eqref{wignerSC}. In fact we will make use of
one of the standard approaches (see e.g. \cite{PS_2011}) which proceeds by considering the
particular linear statistic
\begin{equation} \label{1.7oa}
a(\lambda)=(z-\lambda)^{-1}.
\end{equation}
For the Gaussian $\beta$-ensemble, with
\begin{equation}
   R(z) \equiv \lim_{N\to \infty} \frac{1}{N} \mu_{N}\left[ \sum^{N}_{j=1} \frac{1}{z-\lambda_j} \right] ,
\end{equation}
it is possible to deduce that \cite{Jo_1998}
\begin{equation}
  R(z)^2-4z R(z)+4 = 0 ,
\end{equation}
which gives
\begin{equation}
   R(z) = 2\left( z-\sqrt{z^2-1} \right) .
\end{equation}
But from the definition of $ R(z) $ and $\tilde{ \rho}_{(1),0}(x;\beta) $
\begin{equation}
  \tilde{ \rho}_{(1),0}(x;\beta) = \frac{1}{2\pi i} \lim_{\epsilon \to 0^+}\left( R(x-i\epsilon)-R(x+i\epsilon) \right) ,
\label{Sinverse}
\end{equation}
and \eqref{wignerSC} follows.

In addition to knowledge of the explicit form of $ \tilde{ \rho}_{(1),0}(x;\beta) $ as given by  \eqref{wignerSC},
it is furthermore well known that \cite{Jo_1998, FFG_2006}
\begin{equation}
  \tilde{\rho}_{(1),1}(\lambda;\beta) = \left( \frac{1}{\beta}-\frac{1}{2} \right)\left\{ \tfrac{1}{2}\left[ \delta(\lambda-1)+\delta(\lambda+1) \right] - \frac{1}{\pi\sqrt{1-\lambda^2}} \right\} .
\label{1stcorrection}
\end{equation}
Note that the dependence on $ \beta $ is a linear polynomial in $ 1/\beta $ which vanishes at
$ \beta=2 $, and that this latter feature is consistent with the expansion \eqref{largeNmu} only
involving even inverse powers of $ N $ for $ \beta=2 $. The result \eqref{1stcorrection} follows from
the $ 1/N $ term in the expansion
\begin{equation}
   \frac{1}{N} \mu_{N}\left[ \sum^{N}_{j=1} \frac{1}{z-\lambda_j} \right] = R(z)
  + \frac{1}{N} \left( \frac{1}{\beta}-\frac{1}{2} \right)\left[ \frac{1}{\sqrt{z^2-1}}-\frac{z}{z^2-1} \right] + {\rm O}(N^{-2}) ,
\label{leadingNmu}
\end{equation}
and application of \eqref{Sinverse}. It might then seem that our task is to extend the expansion
\eqref{leadingNmu} to higher order. While this is essentially the case, there are some complications.
For example, the next term in the expansion \eqref{leadingNmu} is \cite{BMS_2011,MMPS_2012}
\begin{equation}
  \frac{1}{N^2}\left\{ \tfrac{1}{4}\left( 1-\frac{2}{\beta} \right)^2 \left[ \frac{z^2+\frac{1}{4}}{(z^2-1)^{5/2}}-\frac{z}{(z^2-1)^2} \right] + \tfrac{1}{16}\frac{1}{(z^2-1)^{5/2}} \right\} .
\label{2ndcorrection}
\end{equation}
Application of \eqref{Sinverse} then gives that $ \rho_{(1),2}(\lambda;\beta) $ contains a term
proportional to $ (1-\lambda^2)^{-5/2} $ and thus is not integrable at $ \lambda=\pm 1$. One of
our tasks then is to give meaning to the terms in \eqref{largeNmu} in the light of such singularities.

In the existing literature one can find the expansion \eqref{leadingNmu} extended up to and including
the term $ {\rm O}(N^{-4}) $ in \cite{BMS_2011}, and up to and including the term $ {\rm O}(N^{-6}) $ in \cite{MMPS_2012}.
However, examination of the two sets of results show that they disagree in the term $ {\rm O}(N^{-4}) $.
Thus we have no option but to go back to scratch and to derive the expansion \eqref{leadingNmu} and
its extension to higher orders for ourselves. As in \cite{BMS_2011,MMPS_2012} we use the loop equation method, the
details of which are given in \S \ref{LoopEquations}.

A corollary of knowledge of the expansion \eqref{leadingNmu} (extended to higher orders) is
knowledge of the expansion of \eqref{largeNmu} in the case of $ a(\lambda)=\lambda^{2p} $, $ p \in \mathbb{Z}_{\ge 0} $,
corresponding to the moments of the density. Thus
\begin{equation}\label{1.14}
 \mu_{N,\beta}\left[ \sum^{N}_{j=1} \frac{1}{z-\lambda_j} \right] =
   \sum^{\infty}_{p=0} \frac{1}{z^{1+2p}} \mu_{N,\beta}\left[ \sum^{N}_{j=1}\lambda_j^{2p} \right]  =
   \sum_{p=0}^\infty \frac{1}{z^{1+2p}} \Big\langle {\rm Tr} \, G^{2p} \Big\rangle_{{\rm G}\beta{\rm E}^*}.
\end{equation}
This should be interpreted as an asymptotic expansion as the sum need not converge.
Moreover, the moments have the crucial property that their large $ N $ expansion terminates,
\begin{equation}\label{1.15}
  \frac{1}{N} \Big\langle  {\rm Tr}\, G^{2p} \Big\rangle_{{\rm G}\beta{\rm E}^*} = \sum^{p}_{g=0} \frac{1}{N^g} \int^{\infty}_{-\infty} \lambda^{2p} \tilde{\rho}_{(1),g}(\lambda;\beta) d\lambda .
\end{equation}
This will play a crucial role in us giving meaning to the expansion \eqref{largeNmu} for general
$ a(\lambda) $, in the light of the non-integrable singularities of the $ \tilde{\rho}_{(1),g}(\lambda;\beta) $
identified below \eqref{2ndcorrection}.

One possible approach to study the expansion \eqref{largeNmu} is to compute the large $ N $
expansion of the $ \rho_{(1)}(\lambda;\beta) $ itself, rather than (\ref{1.15}).
Actually this is rather complicated as there
are three distinct scaling regimes: $ -1< \lambda <1 $, $ \lambda \approx \pm 1+X/2N^{2/3} $, and
$ |\lambda|>1 $ which correspond to the bulk, soft edge and exponentially small portion of
$ \rho_{(1)}(\lambda;\beta) $ respectively. Moreover in the bulk regime there are both oscillatory and
non-oscillatory terms. In fact the explicit carrying out of this expansion  \cite{GFF_2005} up to including the
term at order $ N^{-2} $ shows that $ \tilde{\rho}_{(1),g}(\lambda;\beta) $ for
$g=0,1,2$ is the same as that obtained by expanding
$ \rho_{(1)}(\lambda;\beta) $ in the region $ -1< \lambda <1 $ and ignoring the oscillatory terms.
For $\beta = 2$ this expansion can be generated to higher order using the linear differential
equation satisfied by $ \rho_{(1)}$, see (\ref{GUE_densityODE}) below, and the same property is observed.

We commented that the original  motivation for considering the expansion (\ref{1.15}) came
from its relevance to matrix models and their topological interpretation. Further interest
was then identified with respect to the underlying asymptotic smoothed signed densities,
and in particular to the analytic challenge issued by their in general non-integrable
singularities at $x = \pm 1$. As suggested by the final sentence of the above paragraph,
integrability provides yet another motivation. It turns out that orthogonal polynomial expressions
for the density $\rho_{(1)}^N(\lambda;\beta)$ in the cases $\beta = 1,2$ and 4 (see
e.g.~\cite[Ch.~5\&6]{rmt_Fo}), can be used to determine linear differential equations for the
density and its Stieltjes transform or equivalently the resolvent. In the case $\beta = 2$ these differential equations, which
are both  third order with the same homogeneous part, are known from earlier work \cite{GT_2005, HT_2012}.

We begin in \S 2 by detailing the loop equation formalism as it applies to the
Gaussian $\beta$ ensemble, and we give the explicit form of the first three terms in the large
$N$ expansion. This is supplemented in \S 3 by the specification of the next
three terms in this expansion. This knowledge is used to compute the asymptotic smoothed
signed densities and moments up the corresponding order, and we furthermore address
the problem imposed by the singularities of the former in the computation of the integrals
in (\ref{largeNmu}). In \S 4 we first make note of known characterisations of the moments
for $\beta = 1,2$ and 4 to general order, as these provide checks on our results for general $\beta$.
We consider the problem of determining the resolvent at these couplings for general order,
and this leads us to the consideration of linear differential equations. In addition to giving a
self contained derivation of the known third order linear differential equations for the
density and resolvent at $\beta = 2$, we derive fifth order linear differential equations for
these quantities in the cases $\beta = 1$ and 4.

\section{Loop equations and the large $N$ expansion of the resolvent for the Gaussian $\beta$-ensemble}\label{LoopEquations}
\setcounter{equation}{0}

The Gaussian $\beta$-ensemble eigenvalue probability density function PDF \eqref{GbetaE} is the special case 
$\kappa = \beta/2$, $V(\lambda) = \frac{1}{2}\lambda^{2}$ of the eigenvalue PDF  proportional to
\begin{equation}
\prod_{l=1}^{N} e^{-N\kappa V(\lambda_{l})/g } \prod_{1 \le j < k \le N}|\lambda_{k} - \lambda_{j}|^{2\kappa} .
\label{pdfSTAR}
\end{equation}
We will assume henceforth that $ g, N, \kappa > 0 $.
Note that this is a slight variant on the average given in \eqref{GbetaE} where we have
introduced an extra coupling constant factor $ 4g $. This new average will be denoted as G$\beta$E$^{*}(g)$.
Much of the theory that follows is applicable to general weights which are parameterised in the form
\begin{equation}
  V = g_{0} + \sum_{k=1}^K  \frac{g_{k}} {k}\lambda^{k} .
\label{Vpotential}
\end{equation}
By definition the smoothed eigenvalue density is computed from knowledge of the mean (\ref{Aa}) of a sufficiently general
linear statistic $A$, with a particularly convenient choice being that
corresponding to (\ref{1.7oa}). Thus one wants to compute the so-called resolvent
\begin{equation}
W_{1}(x) := \left \langle \sum_{j=1}^{N} \frac{1}{x - \lambda_{j} } \right \rangle_{{\rm G{\beta}E^{*}(g)}}
         \mathop{\sim}_{x \rightarrow \infty} \sum_{k=0}^{\infty} \frac{m^{*}_{k}}{x^{k+1}},
  \quad m^{*}_{k} = \int_{-\infty}^{\infty} \lambda^{k} \rho_{(1)}^{N} (\lambda) d\lambda .
\label{resolvent}
\end{equation}
As already noted, interest in this quantity also stems from its relationship (\ref{1.14}), (\ref{1.15}) to the
moments of the eigenvalue density.

The large $N$ expansion of the resolvent is in fact a classical problem in random matrix theory
and the theory of matrix models. Its solution involves a recursive set of equations, known as either the
Pastur \cite{Pa_1972} or loop equations \cite{AM_1990} (we will use the latter terminology) in the mathematical literature, or as
Virasoro constraints, Schwinger-Dyson equations or Ward identities in the physics literature.

Several auxiliary quantities are required. Thus we introduce the correlators
\begin{gather}
   W_{n}(x_{1},\ldots,x_{n}) := \left\langle \sum_{i_{1}=1}^{N}\frac{1}{x_{1}-\lambda_{i_{1}} } \cdots \sum_{i_{n}=1}^{N}  \frac{1}{x_{n}-\lambda_{i_{n}} } \right\rangle_{c} ,
\notag \\
   P_{1}(x) := \left\langle \sum_{j=1}^{N}\frac{V^{\prime}(x)-V^{\prime}(\lambda_{j})}{x-\lambda_{j}} \right\rangle ,
\notag \\
\begin{aligned}
   P_{n+1}(x;x_{1},\ldots,x_{n}) &:=
   \left \langle \sum_{i=1}^{N}  \frac{V^{\prime}(x) - V^{\prime}(\lambda_{i})} {x- \lambda_{i} }
   \times \sum_{i_{1}=1}^{N} \frac{1}{x_{1} - \lambda_{i_{1}} }
   \cdots \sum_{i_{n}=1}^{N}  \frac{1}{x_{n} - \lambda_{i_{n}} }  \right \rangle_{c} , \quad n \geq 1,
\end{aligned}
\label{resolventb}
\end{gather}
where the notation $\langle \cdot \rangle_{c}$ denotes the fully truncated (connected) average
(for this latter notation see e.g. \cite[Equation~(5.3)]{rmt_Fo}). With this notation, and furthermore
$I = \{ x_{1},\ldots,x_{n-1} \}$, the general loop equation reads (\cite[Equation~(2.19)]{BEMN_2011}
with $a \rightarrow \infty$ and $d/dx_{i}$ corrected to read  $-d/dx_{i}$, \cite[Equation~(2.25)]{BMS_2011} and references therein)
\begin{multline}
  \kappa \sum_{J \subseteq I}W_{|J|+1} (x,J) W_{n- |J|}(x, I \setminus J) + \kappa W_{n+1}(x,x,I)
  + ( \kappa -1) \frac{\partial W_{n}(x,I)} {\partial x  }
\\
  = \frac{\kappa N}{g} \left[ V^{\prime}(x) W_{n}(x,I) - P_{n}(x;I) \right]
  - \sum_{x_{i} \in I} \frac{\partial }{\partial x_{i}}
    \left[ \frac{W_{n-1}(x,I\setminus \{x_{i} \} ) - W_{n-1}(I) } { x - x_{i} } \right] .
\label{LoopEqn}
\end{multline}

Our quantity of interest, $W_{1}(x)$, thus couples with the auxiliary quantities \eqref{resolventb}
for general $n$, and thus in this sense the loop equations are not closed. The utility of the loop
equations reveals itself by hypothesizing that for large $N$, $W_{n}$ has leading term proportional
to $N^{2-n}$, with higher order terms a power series in inverse powers of $N$. Thus one writes (Equation (2.26) \cite{BEMN_2011})
\begin{equation}
\kappa^{n-1} \left( \frac{g}{N} \right)^{2-n} W_{n} =
 \sum_{l=0}^{\infty} \left( \frac{N \sqrt{ \kappa} } {g}  \right)^{-l} W_{n}^{l},
\label{WlargeN}
\end{equation}
where each $W_{n}^{l}$ is independent of $N$, and also (see \S 2.6.2 \cite{BEMN_2011})
\begin{equation}
\kappa^{n-1} \left( \frac{g}{N} \right)^{2-n} P_{n} =
 \sum_{l=0}^{\infty} \left( \frac{N \sqrt{\kappa } }  {g } \right)^{-l} P_{n}^{l},
\label{PlargeN}
\end{equation}
where each $P_{n}^{l}$ is similarly independent of $N$. The partition function has the expansion (see
Equation (2.25) of \cite{BEMN_2011})
\begin{equation}
F = \sum_{ l \ge 0} \left( \frac{\sqrt{\kappa} N} {g} \right)^{2-l} F^{l} .
\label{FlargeN}
\end{equation}
The large $ N $ expansion differs from a genus or semi-classical expansion, wherein the latter would
employ a development in powers of both the expansion parameters
\begin{equation}
\nu = \frac{ N \sqrt{\kappa} } {g}, \quad \hbar = \frac{g}{N} (1- \kappa^{-1}) .
\label{genus_parameters}
\end{equation}

Substituting \eqref{WlargeN} and \eqref{PlargeN} and equating like powers of $N$ one sees that
a quasi-triangular system of equations result, which can be solved recursively. Moreover, one sees too that
the dependence on $\kappa$ is each $W_{n}^{l}$ is a polynomial in $(\sqrt{\kappa} - 1/\sqrt{\kappa})$
of degree $l$,
\begin{multline}
W_{n}^{l} = \left( \sqrt{\kappa} - \frac{1} {\sqrt{\kappa} } \right)^{l} W_{n}^{0,l}
            + \left( \sqrt{\kappa} - \frac{1} {\sqrt{\kappa} }  \right)^{l-2} W_{n}^{1,l-2} +
\\ \cdots
              + \left( \sqrt{\kappa} - \frac{1} {\sqrt{\kappa} }  \right)^{l-2[l/2]} W_{n}^{[l/2],l-2[l/2]},
\label{WdoubleExp}
\end{multline}
where $\{ W_{n}^{g,l - 2g} \}_{g=0,\ldots,[l/2]}$ are independent of $N$ and $\kappa$.

We now specialize to the Gaussian potential $V(x)= \frac{1}{2}x^{2}$ as corresponds to \eqref{GbetaE}.
Since then
\begin{equation}
   \frac{V^{\prime}(x)-V^{\prime}(\lambda)}{x-\lambda} = 1,
\label{GaussianV}
\end{equation}
the definition \eqref{resolventb} gives
\begin{equation}
  P_{1} = \left \langle \sum_{j=1}^{N} 1 \right \rangle = N \langle 1 \rangle =N =: \frac{N}{g} P_{1}^{0} ,
\end{equation}
and thus
\begin{equation}
P_{1}^{l} =
\begin{cases}
g, & l=0, \\
0, & \mbox{otherwise}.
\end{cases}
\label{GaussianP1}
\end{equation}
Also, using \eqref{GaussianV} in the definition \eqref{resolventb} we have
\begin{align*}
P_{n+1}(x;x_{1},\ldots,x_{n}) = \left \langle \sum_{i=1}^{N} 1
\sum_{i_{1}=1}^{N} \frac{1}{x_{1}- \lambda_{i_{1}} } \cdots
\sum_{i_{n}=1}^{N} \frac{1}{x_{n}- \lambda_{i_{n}} }  \right \rangle_{c} .
\end{align*}
In fact this vanishes identically for $ n>1 $.

\begin{theorem}	\label{Thm_GaussianP}
For each $n=1,2,\ldots$ we have
\begin{equation}
P_{n+1}(x;x_{1},\ldots,x_{n}) = 0 .
\end{equation}
\end{theorem}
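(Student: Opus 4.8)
The plan is to reduce the statement to the elementary fact that the fully truncated (connected) average $\langle X_{0} X_{1} \cdots X_{m} \rangle_{c}$ of $m+1 \ge 2$ random variables vanishes as soon as one of its entries is deterministic. Recall from the discussion immediately preceding the theorem that for the Gaussian weight $V(x) = \tfrac{1}{2} x^{2}$ one has $\frac{V'(x) - V'(\lambda)}{x - \lambda} = 1$, so that the first ``slot'' in the definition \eqref{resolventb} of $P_{n+1}$ collapses to a constant,
\[
  \sum_{i=1}^{N} \frac{V'(x) - V'(\lambda_{i})}{x - \lambda_{i}} = \sum_{i=1}^{N} 1 = N ,
\]
independently of $x$ and of the eigenvalue configuration $\lambda_{1}, \ldots, \lambda_{N}$. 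Writing $Y_{j} = \sum_{i_{j}=1}^{N} (x_{j} - \lambda_{i_{j}})^{-1}$, the quantity $P_{n+1}(x; x_{1}, \ldots, x_{n})$ is therefore exactly the $(n+1)$-point connected average $\langle N \, Y_{1} \cdots Y_{n} \rangle_{c}$, one entry of which is the constant $N$, and since $n \ge 1$ there are $n+1 \ge 2$ entries in total.

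Next I would record the vanishing-on-constants property of $\langle \cdot \rangle_{c}$ that does the work. The quickest route is through the generating function: if $X_{0} \equiv a$ almost surely, then
\[
  \log \Big\langle \exp\Big( t_{0} X_{0} + \sum_{j \ge 1} t_{j} Y_{j} \Big) \Big\rangle
  = t_{0} a + \log \Big\langle \exp\Big( \sum_{j \ge 1} t_{j} Y_{j} \Big) \Big\rangle ,
\]
and the right-hand side contains no monomial in which $t_{0}$ occurs together with any $t_{j}$, $j \ge 1$; since the connected average $\langle X_{0} Y_{1} \cdots Y_{n} \rangle_{c}$ is read off from the mixed term $t_{0} t_{1} \cdots t_{n}$ in this expansion (equivalently, it is the mixed partial derivative $\partial_{t_{0}} \partial_{t_{1}} \cdots \partial_{t_{n}}$ evaluated at the origin), it vanishes. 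Equivalently, one may argue from the set-partition (M\"obius inversion) formula expressing $\langle \cdot \rangle_{c}$ in terms of ordinary expectations: the constant factors out of whichever block contains the index $0$, and the resulting alternating sum over partitions of $\{0, 1, \ldots, n\}$ telescopes to zero. The $n=1$ instance of this is simply the statement that the covariance of any random variable with a constant is zero, and the general case is its multilinear extension; all moments here are finite, so there is no convergence issue.

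Combining the two observations, $P_{n+1}(x; x_{1}, \ldots, x_{n}) = \langle N \, Y_{1} \cdots Y_{n} \rangle_{c} = 0$ for all $n \ge 1$, which is the assertion of the theorem; the fact that the answer does not depend on $x$ is automatic once the first slot has been replaced by the constant $N$. I do not expect a genuine obstacle in this argument: the only point that warrants care is stating precisely, and correctly attributing, the property that connected correlators annihilate deterministic entries, together with a brief check that the hypothesis $n \ge 1$ (equivalently, at least two slots) is exactly what is needed for it to apply --- with a single slot one instead has $P_{1} = \langle N \rangle = N \ne 0$, in accordance with \eqref{GaussianP1}.
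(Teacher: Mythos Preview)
Your proposal is correct. The paper proves the same underlying lemma --- that a connected correlator with a deterministic entry vanishes --- but does so by induction on the number of random factors, using the set-partition expansion of $\langle 1 \cdot A_{1} \cdots A_{m+1}\rangle$ and the inductive hypothesis to kill every block that properly contains the index of the constant entry; this is essentially your second sketched argument made explicit. Your generating-function argument is a genuinely different and more economical route: once one accepts that the joint cumulants are the Taylor coefficients of $\log\langle \exp(\sum t_j X_j)\rangle$, the factorisation $\langle e^{t_0 a + \sum_{j\ge 1} t_j Y_j}\rangle = e^{t_0 a}\langle e^{\sum_{j\ge 1} t_j Y_j}\rangle$ immediately shows that no mixed $t_0 t_{j_1}\cdots t_{j_k}$ term can occur, with no induction needed. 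The trade-off is that the paper's argument works purely from the recursive moment--cumulant relation and does not require invoking (or justifying, in the formal-power-series sense) the exponential generating function, whereas yours is shorter but presupposes that characterisation of cumulants.
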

\begin{proof}
This is a special case of the more general result
\begin{equation}
\langle 1 \cdot A_{1} \cdots A_{n} \rangle_{c} = 0 ,
\label{cumulant_Gaussian}
\end{equation}
valid for $n \ge 1$. We can establish \eqref{cumulant_Gaussian} by induction. The base case is $n=1$
when we have
\begin{align*}
\langle 1 \cdot A_{1} \rangle_{c} := \langle 1 \cdot A_{1} \rangle -
\langle 1 \rangle  \langle  A_{1} \rangle = 0 ,
\end{align*}
as required. We now assume \eqref{cumulant_Gaussian} is valid for $n=1,\ldots,m$, with our remaining task
being to show that it is true for $n=m+1$. For this purpose, let $A_{0}:=1$ and
$A_{I}:=A_{i_{1}} \cdots A_{i_{n}}$, $I = \{ i_{1}, \cdots, i_{k} \}$. Then from the definition of a connected
correlator we have
\begin{equation}
\langle 1 \cdot A_{1} \cdots A_{m+1} \rangle
= \langle 1 \cdot A_{1} \cdots A_{m+1} \rangle_{c}
+ \sum_{k=2}^{m+2} \sum_{I_{1} \cup \cdots \cup I_{k} = \{0,\ldots,m+1 \} }
\prod_{j=1}^{k}  \langle A_{I_{j}} \rangle_{c} .
\label{AUXcumulant}
\end{equation}
By the induction hypothesis, if $0 \in I_{j}$ and $|I_{j}|>1$ we have $ \langle A_{I_{j}} \rangle_{c} =0 $
(i.e. the 0 index must be in a subset of its own). Thus
\begin{align*}
 \sum_{k=2}^{m+2} \sum_{I_{1} \cup \cdots \cup I_{k} = \{0,\ldots,m+1 \} }
\prod_{j=1}^{k}  \langle A_{I_{j}} \rangle_{c}
& =
 \sum_{k=1}^{m+1} \sum_{I_{1} \cup \cdots \cup I_{k} = \{1,\ldots,m+1 \} }
\prod_{j=1}^{k}  \langle A_{I_{j}} \rangle_{c} \notag \\
& = \langle A_{1} \cdots A_{m+1} \rangle.
\end{align*}
Substituting this back in \eqref{AUXcumulant} implies \eqref{cumulant_Gaussian}.
\end{proof}

The loop equations, for the Gaussian potential and resolved into the large $N$ expansion, are then
solved in the following hierarchy, whose initial parts we give in three steps. -\\
{\it Step 1} -
Order $N^2$ terms of the $n=1$ loop equations:\\
\begin{equation*}
(W_{1}^{0}(x))^{2} - x W_{1}^{0}(x) + g = 0 ,
\end{equation*}
with the solution
\begin{equation*}
W_{1}^{0}(x) = \tfrac{1}{2} \left( x - \sqrt{x^{2} - 4g} \right) ,
\label{W1,0_soln}
\end{equation*}
where the negative sign is chosen so that  $W_{1}^{0} \mathop{\sim}_{x \rightarrow \infty} O(x^{-1})$. Thus
\begin{equation*}
W_{1}^{0}  =  \tfrac{1}{4}  \sum_{k \ge 0}  \frac{(4g)^{k+1}}{x^{2k+1}}\frac{\Gamma(k +  \frac{1}{2}) } {(k+1)! \Gamma( \frac{1}{2}) } ,
\label{W1,0_expansion}
\end{equation*}
which yield the moments of the Wigner semi-circle law
\begin{equation*}
\tilde{\rho}_{(1),0}(x;\beta) = \frac{1}{2\pi} \sqrt{4g - x^{2}} \quad \mbox{on } (-2\sqrt{g}, 2\sqrt{g}) .
\label{rho0}
\end{equation*}
{\it Step 2} -
Order $N^{1}$ terms of the $ n=1 $ loop equations:\\
\begin{equation*}
 (2W_{1}^{0}(x)-x) W_{1}^{1}(x)  + \left( \sqrt{\kappa}-\frac{1}{\sqrt{\kappa}} \right) \frac{\partial}{\partial x}W_{1}^{0}(x) = 0 ,
\end{equation*}
with the solution
\begin{equation*}
W_{1}^{1}(x) = \tfrac{1}{2} \left( \sqrt{\kappa}-\frac{1}{\sqrt{\kappa}}  \right)
	\left[ \frac{1}{\sqrt{x^{2} -4g } } - \frac{x}{x^{2} -4g} \right] .
\label{W1,1_soln}
\end{equation*}
Its large $x$ expansion is
\begin{equation*}
W_{1}^{1} = \tfrac{1} {2}  \left( \sqrt{\kappa}-\frac{1}{\sqrt{\kappa}}  \right)
	\sum_{k \ge 0} \frac{(4g)^{k} } {x^{2k+1} } \left[ \frac{(\frac{1}{2})_{k}}{k!}  -1  \right]   \notag \\
\sim  - \left( \sqrt{\kappa}-\frac{1}{\sqrt{\kappa}}  \right) g x^{-3} + \cdots .
\label{W1,1_expansion}
\end{equation*}
{\it Step 3} -
Order $N^{1}$ terms of $n=2$ loop equation and order $N^{0}$ terms of the $ n=1 $ loop equations:\\
\begin{equation*}
  \left(2W_{1}^{0}(x)-x \right) W_{2}^{0}(x,x_{1}) + \frac{\partial}{\partial x_{1}} \frac{W_{1}^{0}(x)-W_{1}^{0}(x_{1})}{x-x_{1}} = 0 ,
\end{equation*}
with its solution
\begin{equation*}
W_{2}^{0}(x,x_{1})
   =  \tfrac{1}{2} (x-x_{1})^{-2} \left[ \frac{\sqrt{x_{1}^{2}-4g}}{\sqrt{x^{2}-4g}}-1 \right]
     +\tfrac{1}{2} (x-x_{1})^{-1} \frac{x_{1}}{\sqrt{x_{1}^{2}-4g}\sqrt{x^{2}-4g}} ,
\label{W2,0_soln}
\end{equation*}
followed by
\begin{equation*}
 (2W_{1}^{0}(x)-x)W_{1}^{2}(x) + W_{2}^{0}(x,x) + (W_{1}^{1}(x))^{2} + \left( \sqrt{\kappa}-\frac{1}{\sqrt{\kappa}} \right)\frac{\partial}{\partial x}W_{1}^{1}(x) = 0 ,
\end{equation*}
and its solution
\begin{equation*}
W_{1}^{2}(x) =  \left( \sqrt{\kappa}-\frac{1}{\sqrt{\kappa}}  \right)^{2}
	\left[ -\frac{x}{(x^{2}-4g)^{2}} + \frac{x^{2} + g}{(x^{2}-4g)^{5/2}} \right] + \frac{g}{(x^{2}-4g)^{5/2}} .
\label{W1,2_soln}
\end{equation*}
This has the large $x$ expansion
\begin{equation*}
W_{1}^{2}(x) \mathop{\sim}_{x \rightarrow \infty} \left( \sqrt{\kappa}-\frac{1}{\sqrt{\kappa}} \right)^{2}  \left( \frac{3g}{x^{5}} + \cdots \right)
	+ \frac{g}{x^{5}} + \cdots .
\label{W1,2_expansion}
\end{equation*}
In \S \ref{DataLargeN} we supplement our computation of $W_1^0$, $W_1^1$, $W_1^2$ by
specifying $W_1^{(j)}$ for $j$ up to 6, and we furthermore use this to compute the asymptotic
smoothed densities and the moments.

\section{Expansions of the resolvent, moments and smoothed densities for general $ \beta $}\label{DataLargeN}
\setcounter{equation}{0}

\subsection{Resolvent expansion}
We record here the results of the large $ N $ expansion of the resolvent as computed using the loop
equations given in \S \ref{LoopEquations} and make a number of observations on this data.
We recall from (\ref{WlargeN})
that this expansion has the form
\begin{equation}
\frac{g}{N}\tilde{W}_{1} = W_{1}^{0} + \left(\frac{g} {\sqrt{\kappa}N}\right)W_{1}^{1} + \left(\frac{g} {\sqrt{\kappa}N}\right)^{2} W_{1}^{2}
           + \left(\frac{g}{\sqrt{\kappa}N}\right)^{3} W_{1}^{3} +\ldots \quad,
\label{resolventN}
\end{equation}
which as noted in the Introduction is known to be rigorously valid for the
G$\beta$E${}^*(g)$ ensembles.
Relative to (\ref{WlargeN}), on the LHS we have written $\tilde{W}_1$ in place of $W_1$,
so we can distinguish the expanded form from the definition (\ref{resolvent}).
We will utilise the following abbreviations for the variable characterising the $ \beta $-deformation
or deviation from the hermitian case, and the single-cut spectral curve
\begin{equation}\label{hy}
   h:= \sqrt{\kappa} - \frac {1} {\sqrt{\kappa}}, \quad
   y(x):= \sqrt{x^{2}-4g} .
\end{equation}
In the compact two-term form the first six coefficients are
\begin{equation}
W_{1}^{0} = \frac{1}{2} \left[ x - y \right] ,
\label{resolvent_0}
\end{equation}
\begin{equation}
W_{1}^{1} = h \frac{1}{2} \left[ \frac{1}{y} - \frac{x}{y^2} \right] ,
\label{resolvent_1}
\end{equation}
\begin{equation}
W_{1}^{2} = h^{2} \left[ -\frac{x}{y^{4}} + \frac{x^{2} + g } {y^{5}} \right] + \frac{g } {y^{5}} ,
\label{resolvent_2}
\end{equation}
\begin{equation}
W_{1}^{3} = h^{3} 5 \left[ \frac{x^{2} + g } {y^{7}} - \frac{x^{3} + 2gx} {y^{8}} \right]
            +h \frac{1}{2} \left[  \frac{x^{2} + 6g} {y^{7} } - \frac {x^{3} + 30gx} { y^{8}} \right] ,
\label{resolvent_3}
\end{equation}
\begin{multline}
W_{1}^{4} = h^4\left[ -\frac{37 x^3+92 g x}{y^{10}} + \frac{37 x^4+123 g x^2+21 g^2}{y^{11}} \right]
\\
            +h^2\left[ -\frac{23 x^3+180 g x}{2 y^{10}} + \frac{23 x^4+454 g x^2+176 g^2}{2 y^{11}} \right]
            +\frac{21 g \left( x^2+g\right)}{y^{11}} ,
\label{resolvent_4}
\end{multline}
\begin{multline}
W_{1}^{5} =h^5\left[ \frac{353 x^4+1527 g x^2+399 g^2}{y^{13}}-\frac{353 x^5+1766 g x^3+848 g^2 x}{y^{14}} \right]
\\
            +h^3\left[ \frac{445 x^4+4332 g x^2+1512 g^2}{2y^{13}}-\frac{445 x^5+7714 g x^3+7440 g^2 x}{2y^{14}} \right]
\\
            +h\left[ \frac{21 \left(x^4+20 g x^2+14 g^2\right)}{2y^{13}}-\frac{3 \left(7 x^5+628 g x^3+1200 g^2 x\right)}{2y^{14}} \right] ,
\label{resolvent_5}
\end{multline}
\begin{multline}
 W_{1}^{6} = h^6\left[ -\frac{4081 x^5+26392 g x^3+18976 g^2 x}{y^{16}}+\frac{4081 x^6+28625 g x^4+26832 g^2 x^2+1738g^3 }{y^{17}} \right]
\\
            +h^4\left[ -\frac{8567 x^5+101288 g x^3+93600 g^2x}{2y^{16}}+\frac{8567 x^6+147556 g x^4+243180 g^2 x^2+31236 g^3}{2y^{17}} \right]
\\
            +h^2\left[ -\frac{618 x^5+13104 g x^3+18000 g^2 x}{y^{16}}+\frac{618 x^6+32043 g x^4+91299 g^2 x^2+16834 g^3}{y^{17}} \right]
\\
            +\frac{11 g (135 x^4+558 g x^2+158 g^2)}{y^{17}} .
\label{resolvent_6}
\end{multline}


\begin{remark}
Essentially the same number of coefficients were reported in Eq. (33) of \cite{MMPS_2012}, which agree with
our results after correcting for the typographical errors in $ \rho_{1,5} $. Partial results have also been
given in Eq. (2.60) of \cite{BMS_2011} up to $ W_{1}^{4} $ but their result for $ W_{1,2}(p) $ differs from
the coefficient of $ h^2 $ in \eqref{resolvent_4}. Our results, specialised to $ \kappa = 1 $, are consistent
with the recurrence system \eqref{GUEeta}, \eqref{GUErecurC} given in \cite{HT_2012} and the expansion, Eq. (3.60),
of \cite{MS_2009}.
\end{remark}

Inspection of (\ref{resolvent_0}) to (\ref{resolvent_6}) suggest the following analytic form for the $W_1^l$.
\begin{conjecture}\label{C1}
Let $y = y(x)$ be given by (\ref{hy}).
For $ l\geq 2 $ even we have
\begin{equation}
   W_{1}^{l}(x) = h^l \left[ \frac{P^{l}_{1}(x)}{y^{3l-2}}+\frac{P^{l}_{2}(x)}{y^{3l-1}} \right]
                + h^{l-2} \left[ \frac{P^{l}_{3}(x)}{y^{3l-2}}+\frac{P^{l}_{4}(x)}{y^{3l-1}} \right] +
\\
          \ldots + h^2 \left[ \frac{P^{l}_{l-1}(x)}{y^{3l-2}}+\frac{P^{l}_{l}(x)}{y^{3l-1}} \right]
                   + \frac{P^{l}_{l+1}(x)}{y^{3l-1}} ,
\label{Wl_even}
\end{equation}
where $ {\rm deg}_{x}P^{l}_{j}=l-1 $ for $ j=1,3,\ldots, l-1 $, $ {\rm deg}_{x}P^{l}_{j}=l $ for $ j=2,4,\ldots, l $
and $ {\rm deg}_{x}P^{l}_{l+1}=l-2 $.
For $ l\geq 1 $ odd we have
\begin{equation}
   W_{1}^{l}(x) = h^l \left[ \frac{P^{l}_{1}(x)}{y^{3l-2}}+\frac{P^{l}_{2}(x)}{y^{3l-1}} \right]
                + h^{l-2} \left[ \frac{P^{l}_{3}(x)}{y^{3l-2}}+\frac{P^{l}_{4}(x)}{y^{3l-1}} \right] +
          \ldots + h \left[ \frac{P^{l}_{l}(x)}{y^{3l-2}}+\frac{P^{l}_{l+1}(x)}{y^{3l-1}} \right] ,
\label{Wl_odd}
\end{equation}
where the polynomial numerators have $ {\rm deg}_{x}P^{l}_{j}=l-1 $ for $ j=1,3,\ldots, l $ and $ {\rm deg}_{x}P^{l}_{j}=l $
for $ j=2,4,\ldots, l+1 $. The polynomial numerators $ P^{l}_{j} $ are either even or odd with respect to $ x\mapsto -x $
according as the degree is even or odd respectively.
Furthermore, the leading term in the $ x \to \infty $ expansion of $ W_{1}^{l}(x) $ is of order $ x^{-2l-1} $ for all $ l\geq 0 $.
\end{conjecture}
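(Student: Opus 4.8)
\emph{Strategy.} The plan is to prove Conjecture~\ref{C1} by an induction carried out simultaneously for the whole coupled hierarchy $\{W_n^k\}$, rather than for $W_1^l$ in isolation. First I would specialise the general loop equation \eqref{LoopEqn} to the Gaussian potential, where $V'(x)=x$, $P_1^k=g\,\delta_{k,0}$ by \eqref{GaussianP1}, and $P_n^k=0$ for $n\ge2$ by Theorem~\ref{Thm_GaussianP}. Inserting \eqref{WlargeN} and \eqref{PlargeN} and collecting a fixed power of $N$ yields, for each $n\ge1$ and $l\ge0$, a relation in which the unknown $W_n^l$ enters linearly only through the combination $\bigl(2W_1^0(x)-x\bigr)W_n^l(x,I)$; since $2W_1^0(x)-x=-y(x)$ by \eqref{resolvent_0} and \eqref{hy}, this relation reads
\begin{equation*}
  y(x)\,W_n^l(x,I)=\mathcal R_n^l(x,I),
\end{equation*}
where $\mathcal R_n^l$ is an explicit finite combination (with definite signs) of: products $W_{|J|+1}^{l_1}(x,J)\,W_{n-|J|}^{l_2}(x,I\setminus J)$ with $l_1+l_2\le l$, the index sum being strictly smaller for the trivial splits $J=\varnothing$ and $J=I$; the coincidence term $W_{n+1}^{l-2}(x,x,I)$; the term $h\,\partial_x W_n^{l-1}(x,I)$; and the descendants $\partial_{x_i}\!\bigl[\bigl(W_{n-1}^{l}(x,I\setminus\{x_i\})-W_{n-1}^{l}(I)\bigr)/(x-x_i)\bigr]$. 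Every correlator appearing on the right is some $W_m^k$ with $2k+m<2l+n$, so the system is triangular with respect to the weight $2k+m$ and $W_n^l$ is obtained by dividing a known quantity by $y(x)$; the only genuinely initial datum is $W_1^0$, which satisfies the quadratic $(W_1^0)^2-xW_1^0+g=0$ derived in \S\ref{LoopEquations}, and from which $W_1^1$, $W_2^0$ and all the rest are generated.

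\emph{The inductive ansatz.} Through this induction I would propagate the following statement about each $W_n^k$: it is a rational function of $x_1,\dots,x_n$ and of $y(x_1),\dots,y(x_n)$ --- here $y$ is understood as the odd branch, $y(x)\sim x$ as $x\to+\infty$ --- regular away from the branch points $y(x_i)=0$ except for a double pole at each coincidence $x_i=x_j$ when $n\ge2$; its pole order at $y(x_i)=0$ is bounded by an explicit affine function of $k$ and $n$, equal to $3k-1$ when $n=1$; its numerator polynomials obey the degree bounds of \eqref{Wl_even}--\eqref{Wl_odd} when $n=1$; and it decays as $W_n^k=O(x_i^{-2})$ in each variable when $n\ge2$, sharpened on the diagonal to $W_n^k(x,\dots,x)=O(x^{-2k-2n})$, while $W_1^l=O(x^{-2l-1})$. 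Two identities make the propagation of this ansatz through $\mathcal R_n^l$ routine: $y(x)^2=x^2-4g$, which rewrites any $x^a/y^b$ as a sum of $x^{a'}/y^{b'}$ with $b'$ of the target parity at the cost of raising the $x$-degree, and $\partial_x\bigl(x^a y^{-b}\bigr)=a\,x^{a-1}y^{-b}-b\,x^{a+1}y^{-b-2}$, using $y'=x/y$. A term-by-term inspection of $\mathcal R_1^l$ --- whose constituents are $\sum_{l_1+l_2=l,\,l_i\ge1}W_1^{l_1}W_1^{l_2}$, the diagonal restriction $W_2^{l-2}(x,x)$, and $h\,\partial_x W_1^{l-1}$ --- then shows that after these reductions it has exactly poles of orders $3l-2$ and $3l-3$ in $y$ with numerators of the prescribed degrees, so that dividing by $y(x)$ reproduces precisely \eqref{Wl_even} for $l$ even or \eqref{Wl_odd} for $l$ odd. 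The $h$-polynomial structure asserted in the conjecture --- each $W_1^l$ a polynomial of degree $l$ and parity $l$ in $h$ --- is already recorded in \eqref{WdoubleExp} and would simply be cited.

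\emph{Parity and decay.} Invariance of the weight \eqref{GbetaE} under the simultaneous reflection $\lambda_i\mapsto-\lambda_i$ gives $W_n^k(-x_1,\dots,-x_n)=(-1)^n\,W_n^k(x_1,\dots,x_n)$; in particular each $W_1^l$ is an odd function of $x$, and since $y$ is odd this forces every numerator in \eqref{Wl_even}--\eqref{Wl_odd} to have parity opposite to that of the accompanying power of $y$, which is precisely the even/odd dichotomy stated. For the decay, the induction gives $W_1^{l_1}W_1^{l_2}=O(x^{-2l-2})$, $h\,\partial_x W_1^{l-1}=O(x^{-2l})$, and $W_2^{l-2}(x,x)=O(x^{-2l})$ (the diagonal bound with $k=l-2$, $n=2$); dividing by $y(x)\sim x$ then gives $W_1^l=y(x)^{-1}\mathcal R_1^l=O(x^{-2l-1})$. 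The off-diagonal input $W_n^k=O(x_i^{-2})$ for $n\ge2$ is itself elementary: the coefficient of $x_i^{-1}$ in any such $W_n$ is a connected average one of whose factors is the non-random number $N$, hence vanishes (cf.\ \eqref{cumulant_Gaussian}). That the coefficient of $x^{-2l-1}$ in $W_1^l$ is in fact nonzero, so that this is the leading term, can be seen by tracking the top-order term of the recursion, or read off from the sharpness of the termination \eqref{1.15} of the moment expansion.

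\emph{Where the work is.} The conceptual skeleton is elementary; the substance --- and the only place the argument can fail --- is the quantitative closure of the second step. One must fix the pole-order and $x$-degree bounds in the ansatz for the multi-point $W_n^k$, \emph{and for their diagonal restrictions} $W_{n+1}^{l-2}(x,x,I)$, tightly enough that no constituent of $\mathcal R_n^l$ can overshoot, so that exactly the two consecutive $y$-powers $3l-1$ and $3l-2$, and nothing of order $3l-3$ or lower, survive in $W_1^l$. The diagonal restrictions are the delicate point, because the $(x_i-x_j)^{-2}$ singularity of $W_n^k$ off the diagonal interacts with the branch-point behaviour and raises the pole order there --- already $W_2^0(x,x)=g/y^4$ while $W_2^0(x,x_1)$ has only a simple pole in $y(x)$ --- so one must check both that the diagonal limits exist and that they respect the bounds, alongside verifying that the $y^2=x^2-4g$ reductions never push the $x$-degree past the permitted value.
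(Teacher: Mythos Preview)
The paper does \emph{not} prove this statement: it is recorded explicitly as a \emph{conjecture}, motivated solely by inspection of the explicitly computed coefficients $W_1^0,\dots,W_1^6$ in \eqref{resolvent_0}--\eqref{resolvent_6}. There is no argument in the paper beyond this empirical observation, so there is no ``paper's own proof'' against which to compare.

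Your proposal, by contrast, is a genuine attempt at a proof. The inductive scheme you outline --- running the recursion $y(x)\,W_n^l(x,I)=\mathcal R_n^l(x,I)$ over the full hierarchy, graded by the weight $2k+m$, and propagating simultaneous bounds on pole order in $y$, polynomial degree in $x$, $h$-parity, and decay at infinity --- is the natural approach and is structurally sound. The parity argument via $\lambda_i\mapsto-\lambda_i$, the $h$-grading via \eqref{WdoubleExp}, and the decay bookkeeping are all correct as stated. The triangularity in $2k+m$ is also correct.

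That said, what you have is a strategy, not a proof, and you are candid about this in your final paragraph. The substantive gap is exactly where you place it: you have not actually \emph{stated} the ansatz for $W_n^k$ when $n\ge2$ --- you say only that the pole order is ``bounded by an explicit affine function of $k$ and $n$'' --- and you have not carried out the closure check. To turn this into a proof you must (i) write down the precise pole-order and $x$-degree bounds for every $W_n^k$ and for every diagonal restriction $W_{n+1}^k(x,\dots,x,I)$ that feeds into $\mathcal R_n^l$, and (ii) verify term-by-term that the bounds reproduce themselves, with the exact constants $3l-1$ and $3l-2$ for $n=1$ and no slack. The diagonal jump you flag ($W_2^0(x,x)=g/y^4$ versus the simple pole of $W_2^0(x,x_1)$ in $y(x)$) is the first instance of a phenomenon that must be controlled at every level, and the $y^2=x^2-4g$ reductions can shift $x$-degrees in ways that require care. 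None of this looks obstructed, but until the ansatz is made explicit and the closure is checked, the conjecture remains a conjecture.
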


\subsection{Expansion of the smoothed density}
Having the resolvent at hand, in the form of a development in descending powers of $ N $, we come the extract meaning to the density via the inversion
Sokhotski-Plemelj formula
\begin{equation}
  \tilde{\rho}_{(1)}(x) = \frac{1}{2\pi i}\left[ \tilde{W}_1(x-i\epsilon)-\tilde{W}_1(x+i\epsilon) \right]_{x\in (-2\sqrt{g},2\sqrt{g})} .
\end{equation}
However by using the large $ N $ expansion for $ W_1(x) $ this formulae does not yield the true density,
as we have indicated by our notation, but rather the {\it smoothed density} $ \tilde{\rho}_{(1)}(x) $.
The smoothed density does not possess any of the oscillatory contributions of the true density,
the leading order contributions of which have been found in a number of studies
(see e.g.~\cite{GFF_2005, FFG_2006, DF_2006a}), but rather the remnant of these when integrated against classes of test
functions (usually continuously differentiable of all orders and bounded functions $ C^{\infty}_b $). The
smoothed density is in fact a distribution with respect to such a class of functions.

To facilitate the extraction of the smoothed density $ \tilde{\rho}_{(1)}(x) $ it is necessary to express the
coefficients of \eqref{resolventN} in a
partial-fraction form with terms containing factors of $ y^{-\sigma} $, $ x y^{-\sigma} $ where
$ \sigma \in \mathbb{Z}, \mathbb{Z}+\frac{1}{2} $. Noting the above formula
and \eqref{resolventN} we have a similar large-$N$ expansion (although defined slightly differently from \eqref{largeNmu}, in which case there is no variable $g$)
\begin{equation}
 \frac{g}{N}\tilde{\rho}_{(1)} = \tilde{\rho}_{(1),0} + \left(\frac{g} {\sqrt{\kappa}N}\right)\tilde{\rho}_{(1),1} + \left(\frac{g} {\sqrt{\kappa}N}\right)^{2} \tilde{\rho}_{(1),2}
           + \left(\frac{g}{\sqrt{\kappa}N}\right)^{3} \tilde{\rho}_{(1),3} +\ldots \quad.
\label{densityN}
\end{equation}
In addition to the indicator or step-function $ \chi_{x \in (-2\sqrt{g},2\sqrt{g})} $ let us define the Dirac
delta distributions
\begin{equation}
   \epsilon^{(l)}_{(-2\sqrt{g},2\sqrt{g})}(x) := \delta^{(l)}(x-2\sqrt{g})+(-1)^l\delta^{(l)}(x+2\sqrt{g}) ,\quad l = 1,2, \ldots ,
\end{equation}
where we note that the first of these
\begin{equation}
   \epsilon^{(1)}_{(-2\sqrt{g},2\sqrt{g})} = \frac{d}{dx}\chi_{x \in (-2\sqrt{g},2\sqrt{g})} .
\end{equation}
Using a partial fraction expansion of the coefficients in \eqref{densityN} along with
\eqref{resolvent_0}-\eqref{resolvent_6} we have
\begin{equation}
  \tilde{\rho}_{(1),0}(x) = \frac{1}{2\pi}\sqrt{4g-x^2}\;\chi_{x \in (-2\sqrt{g},2\sqrt{g})},
\label{density0}
\end{equation}
\begin{equation}
  \tilde{\rho}_{(1),1}(x) = h \left\{ \frac{1}{2\pi}(4g-x^2)^{-1/2}\;\chi_{x \in (-2\sqrt{g},2\sqrt{g})}
                     - \frac{1}{4}\epsilon^{(0)}_{(-2\sqrt{g},2\sqrt{g})} \right\} ,
\label{density1}
\end{equation}
\begin{multline}
  \tilde{\rho}_{(1),2}(x) =
  h^2 \left\{ \frac{1}{\pi}(x^2+g)(4g-x^2)^{-5/2}\;\chi_{x \in (-2\sqrt{g},2\sqrt{g})}
           +\frac{1}{8\sqrt{g}}\epsilon^{(1)}_{(-2\sqrt{g},2\sqrt{g})} \right\}
\\
                     +\frac{1}{\pi}g(4g-x^2)^{-5/2}\;\chi_{x \in (-2\sqrt{g},2\sqrt{g})} ,
\label{density2}
\end{multline}
\begin{multline}
  \tilde{\rho}_{(1),3}(x) =
   h^3 \left\{ -\frac{5}{\pi}(x^2+g)(4g-x^2)^{-7/2}\;\chi_{x \in (-2\sqrt{g},2\sqrt{g})}
       \right.
\\     \left.
                -\frac{5}{512g^{3/2}}\epsilon^{(1)}_{(-2\sqrt{g},2\sqrt{g})}
                -\frac{5}{256g}\epsilon^{(2)}_{(-2\sqrt{g},2\sqrt{g})}
                -\frac{5}{128g^{1/2}}\epsilon^{(3)}_{(-2\sqrt{g},2\sqrt{g})}
       \right\}
\\
   + h \left\{ -\frac{1}{2\pi}(x^2+6g)(4g-x^2)^{-7/2}\;\chi_{x \in (-2\sqrt{g},2\sqrt{g})}
       \right.
\\     \left.
                +\frac{13}{1024g^{3/2}}\epsilon^{(1)}_{(-2\sqrt{g},2\sqrt{g})}
                +\frac{13}{512g}\epsilon^{(2)}_{(-2\sqrt{g},2\sqrt{g})}
                 +\frac{17}{768g^{1/2}}\epsilon^{(3)}_{(-2\sqrt{g},2\sqrt{g})}
       \right\} .
\label{density3}
\end{multline}
We present $  \tilde{\rho}_{(1),4}(x)$, $  \tilde{\rho}_{(1),5}(x) $ and $  \tilde{\rho}_{(1),6}(x) $
in the Appendix.

Some comments are in order regarding the meaning of these results in relation to their use in
\eqref{largeNmu}. The correct meaning of the integral in \eqref{largeNmu} is the Hadamard
regularised form, or the {\it partie finie} \cite{Hadamard_1953}, which was shown by Riesz \cite{Riesz_1938}
to be the meromorphic continuation of a finite integral. Here we indicate this with the
relevant example for the power law singularities of $\tilde{\rho}_{(1),g}$ at $x = \pm 1$,
\begin{equation}\label{Ia}
   I(\alpha) := \int^{1}_{-1} dx f(x) (1-x^2)^{-\alpha} ,
\end{equation}
where $ f(x) $ is continuously differentiable up to order $ p+1 $ for $ x\in (0,1) $, and $ \alpha \in \mathbb{R}> 0 $.
Changing variables $ y=x^2 $ and defining $ F(y)=\frac{1}{2}[ f(\sqrt{y})+f(-\sqrt{y}) ] $ we subtract off the
first $ p+1 $ terms of the Taylor expansion of $ F $ in the integrand giving
\begin{multline}\label{ML}
  I(\alpha) = \int^{1}_{0} dy\; y^{-1/2}(1-y)^{-\alpha}
              \left[ F(y)-F(1)-(y-1)F'(1)- \cdots -\frac{1}{p!}(y-1)^pF^{(p)}(1) \right]
\\
            + F(1)\int^{1}_{0}dy\;y^{-1/2}(1-y)^{-\alpha} + \ldots
             + \frac{(-1)^p}{p!}F^{(p)}(1)\int^{1}_{0}dy\;y^{-1/2}(1-y)^{p-\alpha} ,
\end{multline}
where the first integral is clearly an ordinary integral if $ p-\alpha> -2 $ and the latter integrals are to be Hadamard regularised.
These latter integrals are examples of Euler $\beta$ integrals and can be evaluated according to
\begin{equation}\label{EB}
   \int^{1}_{0}dy\;y^{-1/2}(1-y)^{q-\alpha} = \frac{\Gamma(1/2)\Gamma(q-\alpha+1)}{\Gamma(q-\alpha+3/2)} ,
\end{equation}
where the meromorphic continuation is with respect to $ \alpha $ and through the explicit form of
the Gamma function.

It is  furthermore the case that  for the singularities of
$\tilde{\rho}_{(1),g}$,
$ \alpha $ in (\ref{Ia}) is a positive half-integer $ \alpha=n+\tfrac{1}{2} $. Then the denominator of the
right-hand side of (\ref{EB})
is $ \Gamma(p-n+1) $.  Also, taking
$p=n-1$ leaves us with a convergent integral in the first line of (\ref{ML}). But for
$ 0\leq p\leq n-1 $ the argument of $ \Gamma(p-n+1) $ is a negative integer
and thus the finite-part is actually zero (the numerator Gamma functions have half-integer arguments).
Thus in the sense of Hadamard regularisation we have ($ p=n-1 $)
\begin{equation}\label{EB1}
  I(n + \tfrac{1}{2}) = \int^{1}_{0} dy\; y^{-1/2}(1-y)^{-\alpha}
              \left[ F(y)-F(1)-(y-1)F'(1)- \cdots -\frac{1}{p!}(y-1)^pF^{(p)}(1) \right] .
\end{equation}

An alternative understanding of (\ref{EB1}) is possible. First we note that the final statement
in Conjecture \ref{C1} is equivalent to the moment identity
\begin{equation}
   \int^{\infty}_{-\infty} dx\; x^{2\sigma} \tilde{\rho}_{(1),l}(x) = 0, \quad 0 \leq \sigma \leq l-1, \quad l \geq 1 .
\label{ORTHOGdensity}
\end{equation}
It can be shown from the explicit forms of \eqref{density1}-\eqref{density3} and \eqref{density4}-\eqref{density6}
that \eqref{ORTHOGdensity}
is satisfied for $ 0 \leq \sigma \leq l-1 $ and $ 1 \leq l \leq 6 $. The mechanism of how this occurs is
that there is mutual cancellation amongst the terms with delta function derivatives for low moment orders
$ 0 \leq \sigma \leq l-2 $, whilst the cancellation at $ \sigma=l-1 $ is between the first non-zero integral
and the delta-function terms. Use of (\ref{ORTHOGdensity}) shows that subtracting the first
$l-1$ terms  of the power terms in the variable $1 - x^2$ of an averaged function $f(x)$ leaves
the average unchanged and moreover transforms the divergent integral to a convergent one.
%

\subsection{Moments}
For purposes of comparison with earlier works in this section we will specialise to the
ensemble G$\beta$E$(N)$ (recall text below (\ref{pdfSTAR})) with the PDF
\begin{equation}
    \prod_{l=1}^{N} e^{-\frac{1}{2}\kappa \lambda_{l}^2} \prod_{1 \le j < k \le N}|\lambda_{k}-\lambda_{j} |^{2\kappa}  .
\label{GbetaEpdf}
\end{equation}
Thus we have $ m_{2p}(N,\kappa) := \left\langle \Tr G^{2p} \right\rangle_{{\rm G{\beta}E}} $ in comparison to
our earlier definition \eqref{resolvent}
\begin{equation}
    m^{*}_{2l}(N,\kappa) = \left(\frac{g}{N}\right)^{l}m_{2l}(N,\kappa) ,\quad l\in \mathbb{Z}_{\geq 0}.
\label{Mscaling}
\end{equation}

Moments of the density can be readily computed from the resolvent coefficients \eqref{resolvent_1}-\eqref{resolvent_6}.
Thus we find
\begin{align}
m_{0} = & N ,
\label{moment0} \\
m_{2} = & N^2+N \left(-1+\kappa^{-1}\right) ,
\label{moment2} \\
m_{4} = & 2 N^3+5N^2 \left(-1+\kappa^{-1}\right)+N \left(3-5\kappa^{-1}+3\kappa^{-2}\right) ,
\label{moment4} \\
m_{6} = & 5 N^4+22 N^3 \left(-1+\kappa^{-1}\right)+N^2 \left(32-54\kappa^{-1}+32\kappa^{-2}\right)
\label{moment6} \\
      &  +N\left(-15+32\kappa^{-1}-32\kappa^{-2}+15\kappa^{-3}\right) ,
\notag\\
m_{8} = & 14 N^5+93 N^4 \left(-1+\kappa^{-1}\right)+N^3 \left(234-398\kappa^{-1}+234\kappa^{-2}\right)
\label{moment8} \\
      &  +N^2\left(-260+565\kappa^{-1}-565\kappa^{-2}+260\kappa^{-3}\right)
\notag\\
      &  +N\left(105-260\kappa^{-1}+331\kappa^{-2}-260\kappa^{-3}+105\kappa^{-4}\right) ,
\notag\\
m_{10} = & 42 N^6+386 N^5 \left(-1+\kappa^{-1}\right)+10 N^4\left(145-248\kappa^{-1}+145\kappa^{-2}\right)
\label{moment10} \\
      &  +550 N^3\left(-5+11\kappa^{-1}-11\kappa^{-2}+5\kappa^{-3}\right)
\notag\\
      &  +N^2\left(2589-6545\kappa^{-1}+8395\kappa^{-2}-6545\kappa^{-3}+2589\kappa^{-4}\right)
\notag\\
      &  +N\left(-945+2589\kappa^{-1}-3795\kappa^{-2}+3795\kappa^{-3}-2589\kappa^{-4}+945\kappa^{-5}\right) ,
\notag\\
m_{12} = & 132 N^7+1586 N^6\left(-1+\kappa^{-1}\right)+N^5 \left(8178-14046\kappa^{-1}+8178\kappa^{-2}\right)
\label{moment12} \\
         &  +N^4\left(-22950+50945\kappa^{-1}-50945\kappa^{-2}+22950\kappa^{-3}\right)
\notag\\
         &  +4 N^3\left(9125-23403\kappa^{-1}+30173\kappa^{-2}-23403\kappa^{-3}+9125\kappa^{-4}\right)
\notag\\
         &  +N^2\left(-30669+85796\kappa^{-1}-127221\kappa^{-2}+127221\kappa^{-3}-85796\kappa^{-4}+30669\kappa^{-5}\right)
\notag\\
         &  +3 N\left(3465-10223\kappa^{-1}+16432\kappa^{-2}-18853\kappa^{-3}+16432\kappa^{-4}-10223\kappa^{-5}+3465\kappa^{-6}\right) .
\notag
\end{align}

\begin{remark}
Results for moments up to $ m_{6} $ were given in \cite{DE_2006}, see pg. 9 of that work, and up to $ m_{8} $
were also given by Eq.(24) in \cite{MMPS_2012}, both sets of which coincide with our calculations. We have
also used the MOPS package \cite{DES_2007} to compute the moments up to $ m_{20} $ and find that the first
six coincide with those given above. Another four moments are recorded in \eqref{moment14}-\eqref{moment20}.
\end{remark}

In the study of Dimitriu and Edelman \cite{DE_2006} structural properties for the moments were established using
Jack polynomial theory, and in particular we have the following result.
\begin{theorem}[Thm 2.8 of \cite{DE_2006}]
The general moment $ m_{2l}(N,\kappa) $, $ l\geq 0 $ is a polynomial of degree $ l+1 $ in $ N $ and
has a vanishing tail coefficient, i.e. is proportional to $ N $.
The coefficients with respect to $ N $ are polynomials in $ \kappa^{-1} $ with degree increasing linearly by
unity from the leading term whose degree is zero. These coefficients have a numerator which are a palindromic
polynomial in $ \kappa $ if of even degree or an anti-palindromic polynomial if of odd degree. In the latter case
the numerator has a factor of $ \kappa-1 $. This property can be expressed by the duality relation which
\begin{equation}
  m_{2l}(N,\kappa) = (-1)^{l+1}\kappa^{-l-1}m_{2l}(-\kappa N, \kappa^{-1}) , \quad \forall\; l \geq 0, \;\kappa > 0 .
\label{full_duality}
\end{equation}
\end{theorem}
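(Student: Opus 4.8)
The plan is to read the structural claims off the large-$N$ expansion of the resolvent constructed in \S\ref{LoopEquations}, taking as inputs the polynomial $\kappa$-dependence \eqref{WdoubleExp} of each coefficient $W_1^{l}$ and the termination \eqref{1.15} of the moment expansion. First I would record the scaling relation
\begin{equation*}
  W_1^{l}(x;g)=g^{\,1/2-l}\,W_1^{l}\!\left(x/\sqrt{g}\,;1\right),
\end{equation*}
which holds because the substitution $\lambda\mapsto\sqrt{g}\,\lambda$ removes $g$ from the weight \eqref{pdfSTAR}, giving $W_1(x;g)=g^{-1/2}W_1(x/\sqrt g;1)$, and then matching powers of $N$ in \eqref{resolventN}. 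Using this together with the moment expansion \eqref{resolvent} of the resolvent and the rescaling \eqref{Mscaling}, reading off the coefficient of $x^{-1-2p}$ in the $x\to\infty$ expansion and truncating the $l$-sum by \eqref{1.15}, one obtains
\begin{equation*}
  m_{2p}(N,\kappa)=\sum_{l=0}^{p} a^{l}_{p}(h)\,\kappa^{-l/2}\,N^{\,p+1-l},
  \qquad h=\sqrt{\kappa}-\frac{1}{\sqrt{\kappa}},
\end{equation*}
where $a^{l}_{p}(h)$ is the coefficient of $x^{-1-2p}$ in the $x\to\infty$ expansion of $W_1^{l}(x;1)$ and is, by \eqref{WdoubleExp}, a polynomial in $h$ of degree $\le l$ with the same parity as $l$ and with $\kappa$-independent coefficients. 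Since $a^{0}_{p}$ is the $2p$-th moment of the semicircle law \eqref{density0} at $g=1$, i.e.\ the Catalan number $\binom{2p}{p}/(p+1)\neq 0$, this already gives that $m_{2p}$ is a polynomial of degree exactly $p+1$ in $N$ with no term below $N^{1}$, i.e.\ the polynomiality and the vanishing tail coefficient.

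Next I would change to the variable $\kappa^{-1}$. Writing $a^{l}_{p}(h)=\sum_{r\ge 0} b^{l}_{p,r}\,h^{\,l-2r}$ with $b^{l}_{p,r}$ free of $\kappa$, one has $a^{l}_{p}(h)\kappa^{-l/2}=\sum_{r} b^{l}_{p,r}(1-\kappa^{-1})^{\,l-2r}\kappa^{-r}$, a polynomial in $\kappa^{-1}$ of degree $\le l$; clearing the denominator $\kappa^{l}$ yields the numerator $Q^{l}_{p}(\kappa):=\sum_{r} b^{l}_{p,r}\,\kappa^{r}(\kappa-1)^{\,l-2r}$, for which a one-line computation gives $\kappa^{l}Q^{l}_{p}(1/\kappa)=(-1)^{l}Q^{l}_{p}(\kappa)$, so $Q^{l}_{p}$ is palindromic for $l$ even and anti-palindromic for $l$ odd, the latter forcing the factor $\kappa-1$. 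The duality \eqref{full_duality} then drops out of the displayed moment formula: under $(N,\kappa)\mapsto(-\kappa N,\kappa^{-1})$ one has $h\mapsto-h$, hence $a^{l}_{p}(h)\mapsto(-1)^{l}a^{l}_{p}(h)$ by parity, while $\kappa^{-l/2}\mapsto\kappa^{l/2}$ and $N^{\,p+1-l}\mapsto(-1)^{\,p+1-l}\kappa^{\,p+1-l}N^{\,p+1-l}$, and the sign and power factors combine to $(-1)^{p+1}\kappa^{p+1}$ independently of $l$. Conversely, once polynomiality in $N$ is in hand, the palindromic/anti-palindromic structure and the prescribed degrees of the $N$-coefficients also follow purely formally from \eqref{full_duality} by comparing coefficients of $N^{j}$ on both sides.

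The delicate point is to show that the degree of the coefficient of $N^{\,p+1-l}$ in $\kappa^{-1}$ is \emph{exactly} $l$, that is $b^{l}_{p,0}\neq 0$ for every $p\ge l$; I expect this to follow from the explicit single-term shape of the $h^{l}$-part of $W_1^{l}$ predicted in Conjecture~\ref{C1} (for $l\le 6$ it can be read directly off \eqref{resolvent_0}--\eqref{resolvent_6}), so only a short extra argument is needed here. The more fundamental point is that the whole argument rests on the termination \eqref{1.15}; a proof that did not quote it would have to establish it, which is essentially the last assertion of Conjecture~\ref{C1} and is the true obstacle. It is precisely here that the original argument of Dumitriu and Edelman is self-contained: expanding the power sum $\sum_{j}\lambda_{j}^{2p}$ in Jack polynomials with parameter $\alpha=\kappa^{-1}$ and inserting the closed form for the Gaussian average of a Jack polynomial --- a product of Pochhammer-type factors linear in $N$ and in $\alpha$ --- makes the polynomiality and all the degree counts manifest, and \eqref{full_duality} appears as the Jack duality $P^{(\alpha)}_{\mu}\leftrightarrow P^{(1/\alpha)}_{\mu'}$ combined with the negative-dimension substitution $N\mapsto-\alpha N$.
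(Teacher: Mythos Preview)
The paper does not give its own proof of this theorem: it is quoted verbatim as a result of Dumitriu and Edelman \cite{DE_2006}, with the one-line remark that it was ``established using Jack polynomial theory.'' So there is nothing to compare your argument against except the Jack-polynomial route you yourself sketch in your final paragraph.

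Your attempt to derive the statement from the loop-equation expansion is a genuinely different idea, but as you correctly flag, it is circular as written. The termination \eqref{1.15} is not proved anywhere in the paper; in fact the only way the paper knows that the large-$N$ expansion of the $2p$-th moment stops at $N^{1}$ is \emph{because} of the cited theorem (polynomiality of $m_{2p}$ in $N$ of degree $p{+}1$ with vanishing constant term). So invoking \eqref{1.15} to deduce polynomiality is assuming what you want to prove. The same circularity infects the degree-exactness claim for the coefficients in $\kappa^{-1}$: you appeal to the $h^{l}$-part of $W_1^{l}$ being nonzero, which is the content of Conjecture~\ref{C1} and is only verified in the paper for $l\le 6$. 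In short, the loop-equation machinery of \S\ref{LoopEquations} produces the moments order by order and is consistent with the theorem, but it does not supply an independent proof; the self-contained argument remains the Jack-polynomial one of \cite{DE_2006} that you summarise at the end.
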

\begin{remark}
It is immediate that
\eqref{moment0}-\eqref{moment12} satisfy \eqref{full_duality}.
\end{remark}

In addition to the resolvent $W_1(x) = W_1(x,N,\kappa)$, let us introduce the exponential
generating function
\begin{equation}
 u(t,N,\kappa) \equiv \sum_{p=0}^{\infty} \frac{t^{2p}}{(2p)!} \left\langle \Tr G^{2p} \right\rangle_{{\rm G{\beta}E}}
       = \left \langle \sum_{j=1}^{N} e^{t \lambda_{j} } \right \rangle_{{\rm G{\beta}E}}
       = \left \langle \Tr e^{tG} \right \rangle_{{\rm G{\beta}E}} .
\label{EXPGF}
\end{equation}
The formal relation with the resolvent, as defined by \eqref{pdfSTAR} and \eqref{resolvent}, is
\begin{gather}
\int_{0}^{\infty} dt\; e^{-xt} u(t,N) =\sqrt{\frac{g}{N}}W_{1}(\sqrt{\frac{g}{N}}x,N) ,
\label{laplaceXfm}
\end{gather}
but in line with our earlier remarks the existence of the integral needs to be examined.
Other generating functions have been employed, including a ``sub''-exponential type defined by
\begin{equation}
   \phi(s,N) \equiv \sum_{p=0}^{\infty} \frac{s^{2p}}{(2p-1)!!} \left \langle \Tr G^{2p} \right \rangle_{{\rm G{\beta}E}} .
\label{subEXPGF}
\end{equation}
It will be observed that \eqref{subEXPGF} is a convergent sum, and both \eqref{EXPGF} and \eqref{subEXPGF}
can be seen as Borel resummations of the divergent expansion for the resolvent.

From (\ref{full_duality}) we can immediately deduce the consequences for the generating functions themselves.
\begin{corollary}\label{dual}
The generating functions satisfy the following duality relations for $ \kappa, N > 0 $
\begin{gather}
    W_{1}(x,N,\kappa) = -\kappa^{-1}W_{1}(x,-\kappa N,\kappa^{-1}) ,
\label{Dual_W}\\
    u(t,N,\kappa) = -\kappa^{-1}u(\kappa^{-1/2}it,-\kappa N,\kappa^{-1}) ,
\label{Dual_u}\\
    \phi(s,N,\kappa) = -\kappa^{-1}\phi(\kappa^{-1/2}is,-\kappa N,\kappa^{-1}) .
\label{Dual_phi}
\end{gather}
\end{corollary}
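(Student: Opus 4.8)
The plan is to derive Corollary \ref{dual} as a direct formal consequence of the moment duality \eqref{full_duality}, propagating that single identity through the definitions \eqref{resolvent}, \eqref{EXPGF} and \eqref{subEXPGF} of the three generating functions. Each of these is, by construction, a sum over $p$ (or equivalently over the powers $2p$) of the moments $m_{2p}(N,\kappa) = \langle \Tr G^{2p} \rangle_{{\rm G}\beta{\rm E}}$ against a $p$-dependent but $(N,\kappa)$-independent kernel, so the substitution $m_{2p}(N,\kappa) \mapsto (-1)^{p+1}\kappa^{-p-1} m_{2p}(-\kappa N, \kappa^{-1})$ can be carried inside the sum term by term. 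The only genuine content is bookkeeping the factors of $\kappa$ and the signs that result, and verifying they assemble into the clean prefactors claimed.

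First I would treat $W_1$. From \eqref{resolvent} (and the relation \eqref{1.14}), $W_1(x,N,\kappa) = \sum_{p\ge 0} x^{-1-2p} m_{2p}(N,\kappa)$ as an asymptotic series; note that only even powers appear because the density is even, so $m^*_{2p+1} = 0$. Applying \eqref{full_duality} to each term gives $W_1(x,N,\kappa) = \sum_{p\ge 0} x^{-1-2p}(-1)^{p+1}\kappa^{-p-1} m_{2p}(-\kappa N,\kappa^{-1})$. The factor $(-1)^{p+1}\kappa^{-p-1} = -\kappa^{-1}\,(-\kappa^{-1})^{p}$, and since $x^{-1-2p}(-\kappa^{-1})^p = x^{-1}(x^{2})^{-p}(-\kappa^{-1})^p$ — here one uses that the dependence on $x$ enters only through $x^{2p}$, so the substitution $\kappa \to \kappa^{-1}$ induced in the argument is absorbed without touching $x$ — the sum collapses to $-\kappa^{-1}\sum_{p\ge 0} x^{-1-2p} m_{2p}(-\kappa N,\kappa^{-1}) = -\kappa^{-1} W_1(x,-\kappa N,\kappa^{-1})$, which is \eqref{Dual_W}. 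For $u$, start from $u(t,N,\kappa) = \sum_{p\ge 0} \frac{t^{2p}}{(2p)!} m_{2p}(N,\kappa)$ and substitute \eqref{full_duality}; the term becomes $-\kappa^{-1}\frac{t^{2p}}{(2p)!}(-\kappa^{-1})^p m_{2p}(-\kappa N,\kappa^{-1})$, and since $(-\kappa^{-1})^p t^{2p} = (\kappa^{-1/2} i t)^{2p}$ (because $(i)^{2p} = (-1)^p$ and $(\kappa^{-1/2})^{2p} = \kappa^{-p}$), this is exactly $-\kappa^{-1}$ times the $p$-th term of $u(\kappa^{-1/2} i t, -\kappa N,\kappa^{-1})$, giving \eqref{Dual_u}. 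The argument for $\phi$ in \eqref{subEXPGF} is identical with $\frac{t^{2p}}{(2p)!}$ replaced by $\frac{s^{2p}}{(2p-1)!!}$, yielding \eqref{Dual_phi}.

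The main obstacle, such as it is, is not analytic but a matter of being careful that the duality \eqref{full_duality} is being applied legitimately term by term: for $u$ and $\phi$ the series converge (as noted in the text after \eqref{subEXPGF}), so there is no issue, while for $W_1$ the identity holds at the level of formal/asymptotic power series in $1/x$, which is the only sense in which \eqref{Dual_W} is asserted anyway. One should also record the elementary algebraic identity $(-1)^{p+1}\kappa^{-p-1} = -\kappa^{-1}(-\kappa^{-1})^p$ explicitly, since it is the pivot of all three computations, and remark that the absence of odd moments is what makes the half-integer power $\kappa^{-1/2}$ combine with $i$ to produce a real-analytic statement. With these observations in place the corollary follows immediately, and no further input beyond \eqref{full_duality} and the definitions is needed.
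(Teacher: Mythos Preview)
Your approach---substituting the moment duality \eqref{full_duality} term by term into the three generating functions---is exactly what the paper has in mind (it offers no proof beyond the phrase ``we can immediately deduce''), and your arguments for $u$ and $\phi$ are correct.

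However, your treatment of $W_1$ has a genuine gap. You write $W_1(x,N,\kappa) = \sum_{p\ge 0} x^{-1-2p} m_{2p}(N,\kappa)$, but this is not the expansion \eqref{resolvent}: the resolvent is defined for the \emph{scaled} ensemble G$\beta$E$^{*}(g)$, so its coefficients are $m^{*}_{2p}(N,\kappa) = (g/N)^{p} m_{2p}(N,\kappa)$ as in \eqref{Mscaling}. With the unscaled $m_{2p}$ your computation produces an orphaned factor $(-\kappa^{-1})^{p}$, and your parenthetical claim that this is ``absorbed without touching $x$'' is simply not true---nothing in the unscaled expansion can swallow it, and indeed \eqref{Dual_W} keeps $x$ fixed while \eqref{Dual_u} and \eqref{Dual_phi} do not. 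The correct bookkeeping is that under $N \mapsto -\kappa N$ the scaling factor transforms as $(g/N)^{p} \mapsto (g/(-\kappa N))^{p} = (-\kappa^{-1})^{p}(g/N)^{p}$, which is precisely the missing factor; then
\[
-\kappa^{-1} W_1(x,-\kappa N,\kappa^{-1})
= -\kappa^{-1}\sum_{p\ge 0} x^{-1-2p}(-\kappa^{-1})^{p}(g/N)^{p} m_{2p}(-\kappa N,\kappa^{-1})
= \sum_{p\ge 0} x^{-1-2p}(g/N)^{p} m_{2p}(N,\kappa),
\]
using \eqref{full_duality} in the last step. This yields \eqref{Dual_W}. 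Once you insert the $(g/N)^{p}$ factor and this one line, your proof is complete.
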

\begin{remark}
One can verify that \eqref{resolventN} together with \eqref{Wl_even} and \eqref{Wl_odd} satisfies \eqref{Dual_W}
for general $ x, N, \kappa $.
\end{remark}

To the foregoing result on the low order moments we can add some explicit detail concerning the coefficients
of the general $ 2l$-th moment. It is a classical result that
the leading coefficient with respect to $ N $ is the $ l $-th Catalan number,
$ C_{l} \equiv \frac{(2l)!}{l!(l+1)!} $, which expresses the appearance
of the Wigner semi-circle law in the bulk scaling of $ \frac{1}{N}W_{1}(x,N) $ in the $ N \to \infty $ limit.
We can give simple explicit formulae for the sub-leading coefficients as well, for general values of $ l $.
\begin{theorem}\label{T19}
The moment $ m_{2l} $ has the further properties - \\
The coefficient of $ \kappa^{-1/2}N^{l} $ in $ m_{2l} $ is (which is the integer sequence {\tt A000346} \cite{OEIS_2010})
\begin{equation}
   2^{2l-1}\left[ -1+\frac{\Gamma(l+\frac{1}{2})}{\sqrt{\pi}\Gamma(l+1)} \right]h, \quad l\geq 1 .
\label{Mcoeff:1st}
\end{equation}
The coefficient of $ \kappa^{-1}N^{l-1} $ in $ m_{2l} $ is
\begin{equation}
  \frac{1}{3}4^{l-1} l\left[ -3+(5l+1)\frac{\Gamma(l+\frac{1}{2})}{\sqrt{\pi}\Gamma(l+1)} \right]h^2
     + \frac{1}{3}4^{l-1}l(l-1) \frac{\Gamma(l+\frac{1}{2})}{\sqrt{\pi}\Gamma(l+1)}, \quad l\geq 2 .
\label{Mcoeff:2nd}
\end{equation}
The coefficient of $ \kappa^{-3/2}N^{l-2} $ in $ m_{2l} $ is
\begin{equation}
  \frac{5}{3}4^{l-3} l^2(l-1) \left[ -3+\frac{8 \Gamma(l+\frac{1}{2})}{\sqrt{\pi}\Gamma(l+1)} \right]h^3
    + \frac{1}{3}2^{2l-7}l(l-1)\left[ 28-17l+\frac{16(l-1) \Gamma(l+\frac{1}{2})}{\sqrt{\pi}\Gamma(l+1)} \right]h , \quad l\geq 3 .
\label{Mcoeff:3rd}
\end{equation}
The coefficient of $ \kappa^{-2}N^{l-3} $ in $ m_{2l} $ is
\begin{multline}
  2^{2l-7} l(l-1)(l-2) \left[ \frac{1}{3}(8-15l)+\frac{4(1105l^2-193l-42) \Gamma(l+\frac{1}{2})}{945\sqrt{\pi}\Gamma(l+1)}\right]h^4
\\
    +  4^{l-4} l(l-1)(l-2)\left[ \frac{1}{3}(28-17l) +\frac{16(590l^2-1259l-84) \Gamma(l+\frac{1}{2})}{945\sqrt{\pi} \Gamma(l+1)} \right] h^2
\\
      + 2^{2l-5}l(l-1)(l-2)(l-3) \frac{(5l-2) \Gamma(l+\frac{1}{2})}{45\sqrt{\pi} \Gamma(l+1)}, \quad l\geq 4 .
\label{Mcoeff:4th}
\end{multline}
The coefficient of $ \kappa^{-5/2}N^{l-4} $ in $ m_{2l} $ is
\begin{multline}
  2^{2l-13} l^2(l-1)(l-2)(l-3) \left[ \frac{1}{3}(99-113l)+\frac{128(1105l-1243)\Gamma(l+\frac{1}{2})}{945\sqrt{\pi}\Gamma(l+1)} \right] h^5
\\
    +  4^{l-7} l(l-1)(l-2)(l-3)
\\ \times
    \left[ -\frac{1}{45}(5677l^2-17271l+4952)+\frac{(302080l^2-698368l+10752) \Gamma(l+\frac{1}{2})}{945\sqrt{\pi}\Gamma(l+1)} \right] h^3
\\
      + 2^{2l-13} l(l-1)(l-2)(l-3)
\\ \times
      \left[ -\frac{1}{15}(l-1)(239l-886)+\frac{128(l-3)(5l-2) \Gamma(l+\frac{1}{2})}{45\sqrt{\pi}\Gamma(l+1)} \right] h, \quad l\geq 5 .
\label{Mcoeff:5th}
\end{multline}
The coefficient of $ \kappa^{-3}N^{l-5} $ in $ m_{2l} $ is
\begin{multline}
   4^{l-7} l(l-1)(l-2)(l-3)(l-4)
\\ \times
   \left[ -\frac{1}{15}(565l^2-1295l+512)+\frac{128(82825 l^3-135690 l^2+8081 l+1716) \Gamma(l+\frac{1}{2})}{405405\sqrt{\pi} \Gamma(l+1)} \right] h^6
\\
    +  2^{2 l-15} l(l-1)(l-2)(l-3)(l-4)
\\ \times
       \left[ -\frac{1}{45}(5677l^2-19991l+9432)+\frac{256(5929 l^3-23320 l^2+12861 l+312) \Gamma(l+\frac{1}{2})}{12285\sqrt{\pi} \Gamma(l+1)} \right] h^4
\\
      +  4^{l-7} l(l-1)(l-2)(l-3)(l-4)
\\ \times
         \left[ -\frac{1}{15}(l-1)(239l-886)+\frac{128(93427 l^3-549765 l^2+623360 l+9438) \Gamma(l+\frac{1}{2})}{405405\sqrt{\pi} \Gamma(l+1)} \right] h^2
\\
        + 2^{2l-7} l(l-1)(l-2)(l-3)(l-4)(l-5) \frac{(35 l^2-77 l+12) \Gamma(l+\frac{1}{2})}{2835\sqrt{\pi} \Gamma(l+1)} , \quad l\geq 6 .
\label{Mcoeff:6th}
\end{multline}
\end{theorem}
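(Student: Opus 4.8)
The plan is to read each stated coefficient directly off the explicit resolvent coefficients $W_1^j$, $1\le j\le6$, of \eqref{resolvent_1}--\eqref{resolvent_6}, since these already determine the $2l$-th moment to all orders in $N$. First I would set up the moment--resolvent dictionary. Using \eqref{Mscaling}, the large-$x$ behaviour \eqref{resolvent} of $W_1$, the rigorously valid expansion \eqref{resolventN}, and the elementary scaling identity $W_1^j(x;g)=g^{1/2-j}W_1^j(x/\sqrt g;1)$ (obtained from $\lambda\mapsto\sqrt g\,\lambda$ in \eqref{pdfSTAR} with $V=\tfrac12\lambda^2$, together with the fact that each $W_1^j$ is independent of $N$), one matches powers of $N$ to get
\[
 m_{2l}(N,\kappa)=\sum_{j=0}^{l}\kappa^{-j/2}\,N^{l+1-j}\,\bigl[x^{-2l-1}\bigr]W_1^j(x)\big|_{g=1},
\]
the sum truncating at $j=l$ because $W_1^j$ begins at order $x^{-2j-1}$, as \eqref{resolvent_0}--\eqref{resolvent_6} show. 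By \eqref{WdoubleExp} each $W_1^j$ is a polynomial in $h$ of degree $j$ and of the parity of $j$, so its $x^{-2l-1}$ coefficient is such a polynomial; and since distinct $j$ contribute distinct powers of $N$, the coefficient of $\kappa^{-j/2}N^{l+1-j}$ in $m_{2l}$ is exactly $[x^{-2l-1}]W_1^j(x)|_{g=1}$. (For $j=0$ this recovers the Catalan number $C_l$, and evaluating it for $j=1,\dots,6$ is the content of the theorem.)

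Next I would perform the extraction. With $g=1$ (so $y=\sqrt{x^2-4}$), each $W_1^j$ in \eqref{resolvent_1}--\eqref{resolvent_6} is a finite sum of monomials $x^a/y^b$, and the binomial series $y^{-b}=x^{-b}\sum_{k\ge0}\frac{(b/2)_k}{k!}(4/x^2)^k$ gives $[x^{-2l-1}](x^a/y^b)=4^m(b/2)_m/m!$ with $m=(2l+1+a-b)/2$. Summing over the two-to-four monomials attached to each power of $h$ expresses the coefficient of $\kappa^{-j/2}N^{l+1-j}$ as a short sum $\sum(\text{rational constant})\cdot 4^{l-r}(c)_{l-r}/(l-r)!$, with $c\in\{\tfrac{3j-2}{2},\tfrac{3j-1}{2}\}$ and $r$ a small nonnegative integer. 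Finally I would rewrite $(c)_{l-r}/(l-r)!=\Gamma(l+c)/(\Gamma(c)\Gamma(l-r+1))$ as $\Gamma(l+\tfrac12)/(\sqrt\pi\,\Gamma(l+1))$ times a polynomial in $l$ when $c\in\Z+\tfrac12$ (using $\Gamma(l+c)=(l+c-1)\cdots(l+\tfrac12)\Gamma(l+\tfrac12)$ and $1/\Gamma(l-r+1)=l(l-1)\cdots(l-r+1)/\Gamma(l+1)$), or as a plain polynomial in $l$ when $c\in\Z$; the factor $l(l-1)\cdots(l-r+1)$ common to the terms is precisely what produces the products $l$, $l(l-1)$, $l(l-1)(l-2)$, \dots visible in \eqref{Mcoeff:1st}--\eqref{Mcoeff:6th}. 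Collecting by powers of $h$ and simplifying the rational prefactors then yields the asserted closed forms; for instance, for $j=1$ this gives $\tfrac12 4^l[(1/2)_l/l!-1]\,h=2^{2l-1}[-1+\Gamma(l+\tfrac12)/(\sqrt\pi\,\Gamma(l+1))]\,h$, reproducing \eqref{Mcoeff:1st}.

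The routine but heavy part, and the only real obstacle, is the final simplification for $j=4,5,6$: there $W_1^j$ carries several distinct numerator monomials at each power of $h$, the Pochhammer parameters reach $\tfrac{17}{2}$, and cubic-in-$l$ numerators such as $82825l^3-135690l^2+8081l+1716$ must be reassembled from half a dozen shifted $\Gamma$-ratios once the common polynomial factors in $l$ are pulled out; this bookkeeping is best certified with a computer algebra system. As independent checks, specialising the general formulae to $l=1,\dots,6$ must reproduce the $\kappa$-polynomials displayed in \eqref{moment2}--\eqref{moment12} once those are re-expressed through $h$ and $\kappa^{-1/2}$, and setting $\kappa=1$ (so $h=0$) must leave only the $h^0$ contributions, which for even $j$ should match the corresponding GUE moments.
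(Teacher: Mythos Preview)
Your approach is correct and is essentially the same as the paper's: the paper's proof simply states that all six relations \eqref{Mcoeff:1st}--\eqref{Mcoeff:6th} ``can be established by computing the general term in the large $x$ expansion \dots\ of $W_1^l(x)$ for $l=1,\ldots,6$'', which is exactly the extraction procedure you describe in detail. The paper additionally remarks that the first two formulae can alternatively be obtained from the GOE and GSE large-$N$ moment expansions (Theorems~\ref{GOEfirst} and~\ref{GSEfirst}) together with the duality relation, but this is a cross-check rather than a separate argument, and your direct computation from the $W_1^j$ covers all six cases uniformly.
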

\begin{proof}
The first two equalities follow from Theorems \ref{GOEfirst} and \ref{GSEfirst}, and the previous theorem -
see the formulae \eqref{GOE_expN} and \eqref{GSE_expN}.
All of the relations \eqref{Mcoeff:1st}-\eqref{Mcoeff:6th} can be established by computing the general term in the
large $ x $ expansion (which are convergent expansions) of $ W^{l}_{1}(x) $ for $ l=1,\ldots, 6 $ respectively, as
given by \eqref{resolvent_1} - \eqref{resolvent_6}.
\end{proof}
\

We conclude with an observation on the location of the zeros of the $ N $ coefficients that is satisfied by
all the cases that are accessible to us.
\begin{conjecture}
In addition to the palindromic/anti-palindromic property the numerator of the coefficients with respect to
$ N $ have simple zeros all lying on the unit circle, $ |\kappa|=1 $, and thus form complex conjugate pairs.
\end{conjecture}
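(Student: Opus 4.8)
The plan is to make the (anti-)palindromy explicit, pass to a reduced polynomial on the real line, and then detect the unit-circle location of the zeros by elementary means in each accessible case. Write $m_{2l}(N,\kappa)=\sum_{k=1}^{l+1}c_{l,k}(\kappa)N^{k}$. By Thm 2.8 of \cite{DE_2006} each $c_{l,k}$ has degree $j:=l+1-k$ in $\kappa^{-1}$, and reading \eqref{full_duality} off coefficient-by-coefficient in $N$ shows that the numerator $Q_{l,k}(\kappa):=\kappa^{j}c_{l,k}(\kappa)$ is palindromic of degree $j$ when $j$ is even, and anti-palindromic of degree $j$ --- hence divisible by $\kappa-1$, with $Q_{l,k}/(\kappa-1)$ palindromic of degree $j-1$ --- when $j$ is odd. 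Since a palindromic polynomial of even degree $2M$ in $\kappa$ equals $\kappa^{M}$ times a polynomial of degree $M$ in $v:=\kappa+\kappa^{-1}-2$ (which is exactly $h^{2}$), and since $\kappa\mapsto v$ maps $|\kappa|=1$ two-to-one onto the real segment $[-4,0]$ with $\kappa=\pm1\mapsto v\in\{0,-4\}$, there is a polynomial $S_{l,j}(v)$ of degree $M=\lfloor j/2\rfloor$ --- obtained from the formula of Theorem \ref{T19} for the coefficient of $N^{l+1-j}$ in $m_{2l}$ by dividing by $h$ if $j$ is odd and then putting $h^{2}=v$ --- for which the conjecture, for that coefficient, is \emph{equivalent} to: $S_{l,j}$ has $\lfloor j/2\rfloor$ simple zeros, all real and in the open interval $(-4,0)$. (When $j$ is odd the extra factor $\kappa-1$ supplies the simple unit-circle zero $\kappa=1$, and the interval condition already forces $S_{l,j}(0)\neq0$ and $S_{l,j}(-4)\neq0$.)

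Second, I would verify this reduced statement in the cases where Theorem \ref{T19} furnishes closed forms, i.e.\ $1\le j\le6$, where $S_{l,j}$ is constant ($j=1$), linear ($j=2,3$), quadratic ($j=4,5$) or cubic ($j=6$). For $j=1$ nothing is needed. For the linear cases one shows $S_{l,j}(-4)\,S_{l,j}(0)<0$. For the quadratic cases one shows the discriminant of $S_{l,j}$ is strictly positive (real, distinct, hence simple roots) and that $\operatorname{sgn}S_{l,j}(-4)=\operatorname{sgn}S_{l,j}(0)=\operatorname{sgn}a_{2}$ with the vertex $-a_{1}/(2a_{2})$ in $(-4,0)$; equivalently, by the classical result of Cohn that a self-inversive polynomial has all zeros on $|\kappa|=1$ precisely when its derivative has all zeros in $|\kappa|\le1$, one may run a Schur--Cohn test on $Q_{l,k}'$. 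For $j=6$ one needs the cubic discriminant of $S_{l,6}$ positive together with $\operatorname{sgn}S_{l,6}(-4)=\operatorname{sgn}S_{l,6}(0)=\operatorname{sgn}a_{3}$ and both critical points inside $(-4,0)$ (a Budan--Fourier or Sturm count on $[-4,0]$ is an equivalent route).

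Third, after clearing the common factors $4^{l}$, $l(l-1)\cdots$ displayed in \eqref{Mcoeff:1st}--\eqref{Mcoeff:6th}, each such condition is a strict polynomial inequality in $l$ and in $\gamma_{l}:=\binom{2l}{l}4^{-l}=\Gamma(l+\tfrac12)/(\sqrt{\pi}\,\Gamma(l+1))$ (the discriminants introducing $\gamma_{l}^{2},\gamma_{l}^{3},\gamma_{l}^{4}$). Using $1/\sqrt{\pi(l+\tfrac12)}\le\gamma_{l}\le1/\sqrt{\pi l}$ together with the monotonicity $\gamma_{l+1}=\tfrac{2l+1}{2l+2}\gamma_{l}$, each inequality reduces to a polynomial inequality in $l$ valid beyond an explicit threshold $l_{0}(j)$, and the finitely many remaining cases $j\le l<l_{0}(j)$ are confirmed directly against the tabulated moments \eqref{moment0}--\eqref{moment12} and the further MOPS-computed moments of \cite{MMPS_2012,DES_2007}.

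The main obstacle is twofold. Already for $j=6$ the cubic discriminant is degree four in $\gamma_{l}$ with sizeable polynomial coefficients in $l$, so the third step is heavy and must be organised by isolating the dominant $\gamma_{l}$-power in each range of $l$. More fundamentally, Theorem \ref{T19} supplies closed forms only through $j=6$, so a proof valid for \emph{all} $N$-coefficients would require either extending those formulas to general $j$ or --- preferably --- identifying $S_{l,j}$ with a shifted member of a classical orthogonal family (a Gegenbauer- or Jacobi-type polynomial in $v$), which would render the real, simple, interval-confined zeros automatic. The nonhypergeometric constants appearing in \eqref{Mcoeff:2nd}--\eqref{Mcoeff:6th} (the ``$-3$'', ``$28-17l$'', etc.) are precisely what block the obvious such identification, and this is why the statement is offered as a conjecture.
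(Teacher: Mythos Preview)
The paper offers no proof of this statement: it is explicitly labelled a \emph{conjecture}, supported only by inspection of the moments \eqref{moment0}--\eqref{moment12} and the further MOPS data \eqref{moment14}--\eqref{moment20}. There is therefore nothing to compare your attempt against.

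Your proposal is not a proof either, and you are candid about this in your final paragraph. What you outline is a sound reduction --- passing from the self-inversive numerator $Q_{l,k}(\kappa)$ to a real polynomial $S_{l,j}(v)$ in $v=h^{2}=\kappa+\kappa^{-1}-2$, and translating ``simple zeros on $|\kappa|=1$'' into ``$\lfloor j/2\rfloor$ simple real zeros in $(-4,0)$'' --- followed by a case-by-case verification for $j\le 6$ using the closed forms of Theorem~\ref{T19}. If carried out, this would establish the conjecture for the six top subleading $N$-coefficients of $m_{2l}$ for all $l$, which is strictly more than the paper claims (the paper merely observes the property in the tabulated moments). But it leaves every coefficient with $j\ge 7$ untouched, and your third-step estimates (bounding discriminants via $1/\sqrt{\pi(l+\tfrac12)}\le\gamma_l\le 1/\sqrt{\pi l}$) are only sketched, not executed; for $j=5,6$ the discriminant computations are genuinely laborious and you have not shown they go through.

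In short: there is no gap relative to the paper because the paper proves nothing here; your plan is a reasonable partial-verification programme, correctly identified by you as falling short of a proof of the full conjecture.
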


\section{The special cases $\beta = 1, \, 2$ and 4}
\setcounter{equation}{0}

For simplicity in the final results for the moments and the exponential-type generating
functions\footnote{In contrast to our choice of the eigenvalue PDF in \S \ref{LoopEquations} given by \eqref{pdfSTAR}
and the potential $ V(\lambda) $} we will employ averages with respect to the G$\beta$E ensemble with the PDF
\eqref{GbetaEpdf}.
Thus we have $ m_{2p}(N,\kappa) := \left\langle \Tr G^{2p} \right\rangle_{{\rm G{\beta}E}} $, in comparison to
our earlier definition \eqref{resolvent}.
There are special orthogonal structures for $\beta = 1$, 2 and 4 which enables
special characterisations of the moments and the resolvent not available for general $\beta$. This structure
rests on the {\it semi-classical} character
of the underlying orthogonal polynomial system. For such a system the reproducing kernel is defined as
\begin{equation}
   K_{N}(x,x) = \sqrt{N}e^{-\frac{1}{2}x^2}\left[ p^{\prime}_{N}(x)p_{N-1}(x)-p_{N}(x)p^{\prime}_{N-1}(x) \right] ,
\label{OPSkernel}
\end{equation}
and the orthogonal polynomials $ \{p_n(x)\}^{\infty}_{n=0} $ normalised with respect to $ e^{-\frac{1}{2}x^2} $
are given by
\begin{equation}
   p_n(x) = \frac{1}{\sqrt{\sqrt{2\pi}2^n n!}}H_n\left(\frac{x}{\sqrt{2}}\right) =  \frac{1}{\sqrt{\sqrt{2\pi}n!}}He_n(x) ,
\label{normOP}
\end{equation}
where $ H_n(x) $, $ He_n(x) $ are the standard Hermite polynomials, see \S 18.3 of \cite{DLMF}. The density is
normalised so that
\begin{equation}
  \int^{\infty}_{-\infty} dx\;\rho_{(1)}(x) = N .
\end{equation}
The key relations we require are the generic three-term recurrence relation, which in our context is
\begin{equation}
   xp_{n}(x) = \sqrt{n+1}p_{n+1}(x) + \sqrt{n}p_{n-1}(x) ,
\label{OPS_3trr}
\end{equation}
the semi-classical property of the derivative
\begin{equation}
   p_{n}^{\prime}(x) := \frac{d}{dx}p_{n}(x) = \sqrt{n}p_{n-1}(x) ,
\label{OPS_D}
\end{equation}
and as a consequence the eigenvalue or second-order differential equation
\begin{equation}
  p_{n}^{\prime\prime}-xp_{n}^{\prime}+np_{n} = 0 .
\label{OPS_2ndODE}
\end{equation}

\subsection{$\kappa=1$ GUE Moments}
The density, as the one-point correlation function, has the classical evaluation \cite{rmt_Fo} of a determinant of
the reproducing kernel
\begin{equation}
   \rho_{(1)}(x) = \sqrt{\frac{N}{g}}\left. K_{N}(x,x) \right|_{x\mapsto \sqrt{\frac{N}{g}}x} ,
\label{GUE_densitykernel}
\end{equation}
where the kernel is given in \eqref{OPSkernel}.

A third order ordinary differential equation was found for the density and resolvent directly,
in the works of G{\"o}tze and Tikhomirov \cite{GT_2005} and Haagerup and Thorbj{\o}rnsen \cite{HT_2012}, however we will give an independent proof of
this fact from first principles.
\begin{theorem}[Lemma 2.1 of \cite{GT_2005}, Prop. 2.2 and Lemma 4.1 of \cite{HT_2012}]\label{GUEWode}
The resolvent $ W_{1}(x) $ satisfies the third order, inhomogeneous ordinary differential equation
\begin{equation}
    \frac{g^2}{N^2} W_{1}^{'''}+(4g-x^2) W_{1}^{'}+x W_{1} = 2N, \quad x \notin \mathbb{R} ,
\label{GUEresolventODE}
\end{equation}
subject to the boundary conditions, for fixed $ g, N $
\begin{equation}
    W_{1}(x) \mathop{\sim}_{x \rightarrow \infty} \frac{m_0}{x}+\frac{gm_2}{Nx^3}+\frac{g^2m_4}{N^2x^5}+\frac{g^3m_6}{N^3x^7}+\ldots ,
\label{resolventBC}
\end{equation}
and the moments are given by \eqref{GUE_moments}.
Furthermore, the density satisfies the homogeneous part of (\ref{GUEresolventODE}).
\end{theorem}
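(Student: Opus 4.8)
The plan is to pass through the orthogonal-polynomial representation of the density and reduce everything to a single third-order equation for the reproducing kernel $K_N(x,x)$. Introduce the weighted Hermite functions $\phi_n(x):=e^{-x^2/4}p_n(x)$. From the eigenvalue equation \eqref{OPS_2ndODE} and the chain rule one gets the oscillator relation $\phi_n''=\big(\tfrac{x^2}{4}-n-\tfrac12\big)\phi_n$, and combining the derivative formula \eqref{OPS_D} with the three-term recurrence \eqref{OPS_3trr} one gets the two ladder relations $\phi_n'=\sqrt{n}\,\phi_{n-1}-\tfrac{x}{2}\phi_n$ and $\phi_n'=\tfrac{x}{2}\phi_n-\sqrt{n+1}\,\phi_{n+1}$. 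Rewriting the kernel \eqref{OPSkernel} in these variables gives $K_N(x,x)=\sqrt{N}\,W$ with $W:=\phi_N'\phi_{N-1}-\phi_N\phi_{N-1}'$, so it is enough to prove $W'''+(4N-x^2)W'+xW=0$ and then rescale through \eqref{GUE_densitykernel}.

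To derive this homogeneous equation I differentiate $W$ three times, each time using the $\phi_n''$ relation and the ladder relations to re-express the result in the three quadratic monomials $\phi_N^2,\phi_{N-1}^2,\phi_N\phi_{N-1}$. One finds $W'=\phi_N''\phi_{N-1}-\phi_N\phi_{N-1}''=-\phi_N\phi_{N-1}$ (the two spectral parameters differ by $1$); $W''=-(\phi_N'\phi_{N-1}+\phi_N\phi_{N-1}')=\sqrt{N}(\phi_N^2-\phi_{N-1}^2)$; and, crucially, a direct expansion of $W$ itself yields the algebraic identity $W=\sqrt{N}(\phi_N^2+\phi_{N-1}^2)-x\,\phi_N\phi_{N-1}$. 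This last identity closes the system, since it gives $\sqrt{N}(\phi_N^2+\phi_{N-1}^2)=W-xW'$, whence $W'''=2\sqrt{N}(\phi_N\phi_N'-\phi_{N-1}\phi_{N-1}')=4N\,\phi_N\phi_{N-1}-x\sqrt{N}(\phi_N^2+\phi_{N-1}^2)=-4NW'-xW+x^2W'$, i.e. $W'''+(4N-x^2)W'+xW=0$. Multiplying by $\sqrt{N}$ transfers this to $K_N(x,x)$.

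The two remaining claims are then routine. For the density, \eqref{GUE_densitykernel} reads $\rho_{(1)}(x)=\mu\,K_N(\mu x,\mu x)$ with $\mu=\sqrt{N/g}$; substituting $x\mapsto\mu x$ in the equation just proved, using $\mu^2=N/g$ and multiplying through by $(g/N)^2$, turns it into $\tfrac{g^2}{N^2}\rho_{(1)}'''+(4g-x^2)\rho_{(1)}'+x\rho_{(1)}=0$, the homogeneous part of \eqref{GUEresolventODE}. For the resolvent, write $W_1(x)=\int_{\mathbb{R}}\rho_{(1)}(\lambda)(x-\lambda)^{-1}\,d\lambda$ for $x\notin\mathbb{R}$ and apply $L:=\tfrac{g^2}{N^2}\partial_x^3+(4g-x^2)\partial_x+x$; differentiating under the integral, expanding $L_x[(x-\lambda)^{-1}]$, using $x^2-\lambda^2=(x-\lambda)(x+\lambda)$, and moving all $\lambda$-derivatives onto $\rho_{(1)}$ by three integrations by parts (no boundary terms, by the Gaussian decay of $\rho_{(1)}$) reassembles, under the integral sign, the operator $L$ now in the variable $\lambda$ applied to $\rho_{(1)}$ — which vanishes by the previous step — plus two elementary leftover terms, one from the rewriting $\lambda=x-(x-\lambda)$ in the zeroth-order part and one from the first-order part, each equal to $\int_{\mathbb{R}}\rho_{(1)}=N$. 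Hence $LW_1=2N$, and the boundary conditions \eqref{resolventBC} are nothing but the defining large-$x$ expansion of $W_1$, with the moments read off from the $\phi_n$-representation.

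The one genuinely delicate point is the closing identity $W=\sqrt{N}(\phi_N^2+\phi_{N-1}^2)-x\,\phi_N\phi_{N-1}$: without it, successive differentiations of $W$ generate an ever-lengthening list of quadratic monomials in $\phi_{N-1},\phi_N$ and no finite-order recursion emerges. Recognising that the Christoffel--Darboux kernel admits this second representation — equivalently, that $\phi_N^2+\phi_{N-1}^2$, $\phi_N^2-\phi_{N-1}^2$ and $\phi_N\phi_{N-1}$ are linearly expressible through $W,W',W''$ — is the crux; the rest is bookkeeping. A secondary care-point is the legitimacy of the three integrations by parts in the resolvent step, which is immediate because $\rho_{(1)}$ is a finite sum of the $\phi_n^2$, hence decays like $e^{-x^2/2}$, and because $x$ stays off the real axis so that $(x-\lambda)^{-1}$ is smooth in $\lambda$.
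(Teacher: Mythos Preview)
Your proof is correct and follows essentially the same strategy as the paper: express the successive derivatives of the kernel in a three-dimensional space of bilinear products of the degree-$N$ and degree-$(N{-}1)$ functions, then eliminate to obtain the third-order homogeneous equation, and finally integrate against $(z-x)^{-1}$ with polynomial subtractions to pick up the two copies of $\int\rho_{(1)}=N$ for the inhomogeneous term.

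The only difference is the choice of basis. The paper works with the unweighted polynomials and the triple $\{p_Np_{N-1},\,p_N'p_{N-1},\,p_Np_{N-1}'\}$, inverting an explicit $3\times3$ linear system; you pass to the weighted functions $\phi_n=e^{-x^2/4}p_n$ and the triple $\{\phi_N\phi_{N-1},\,\phi_N^2,\,\phi_{N-1}^2\}$, closing the recursion via the Christoffel--Darboux-type identity $W=\sqrt{N}(\phi_N^2+\phi_{N-1}^2)-x\,\phi_N\phi_{N-1}$. Your parametrisation is a little cleaner here because the oscillator relation $\phi_n''=(x^2/4-n-\tfrac12)\phi_n$ carries no first-derivative term, so each differentiation stays manifestly inside the quadratic span. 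The paper's parametrisation, on the other hand, is the one that extends directly to the GOE and GSE fifth-order equations later in \S4, where the enlarged basis contains products involving an antiderivative $q_N$ and no analogue of your closing identity is available; there one really does have to set up and invert the $5\times5$ system.
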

\begin{proof}
We will establish \eqref{GUEresolventODE} in a few steps, initially establishing that the density satisfies
the homogeneous form of \eqref{GUEresolventODE}. However we will work with the version where the independent
variable is not scaled for the bulk scaling regime purely for convenience and entailing no loss of generality,
which is just the version with $ g \mapsto N $
\begin{equation}
  \rho_{(1)}^{'''}+(4N-x^2)\rho_{(1)}^{'}+x\rho_{(1)} = 0 .
\label{GUE_densityODE}
\end{equation}
Thus $ \rho_{(1)} = K_{N}(x,x) $ and using \eqref{OPSkernel} we compute the first three derivatives with
respect to $ x $
\begin{align}
  \frac{1}{\sqrt{N}}e^{\frac{1}{2}x^2} K_{N} & = p_{N}^{\prime}p_{N-1}-p_{N}p_{N-1}^{\prime} ,
\\
  \frac{1}{\sqrt{N}}e^{\frac{1}{2}x^2} K_{N}^{\prime} & = -p_{N}p_{N-1} ,
\\
  \frac{1}{\sqrt{N}}e^{\frac{1}{2}x^2} K_{N}^{\prime\prime} & = xp_{N}p_{N-1}-p_{N}^{\prime}p_{N-1}-p_{N}p_{N-1}^{\prime} ,
\\
  \frac{1}{\sqrt{N}}e^{\frac{1}{2}x^2} K_{N}^{\prime\prime\prime} & = (4N-x^2)p_{N}p_{N-1}-xp_{N}^{\prime}p_{N-1}+xp_{N}p_{N-1}^{\prime} ,
\end{align}
where we have repeatedly used \eqref{OPS_2ndODE}. Furthermore, of the four bilinear products
$ p_{N}p_{N-1} $, $ p_{N}^{\prime}p_{N-1} $, $ p_{N}p_{N-1}^{\prime} $, $ p_{N}^{\prime}p_{N-1}^{\prime} $, only three are
independent as one can deduce
\begin{equation}
  p_{N}^{\prime}p_{N-1}^{\prime} = -N p_{N}p_{N-1}+x p_{N}^{\prime}p_{N-1} ,
\end{equation}
from \eqref{OPS_D} and \eqref{OPS_3trr}. Using the first three relations one can invert these for
$ p_{N}p_{N-1} $, $ p_{N}^{\prime}p_{N-1} $, $ p_{N}p_{N-1}^{\prime} $ as the determinant of the transformation is
non-vanishing. Thus we have
\begin{align}
  e^{-\frac{1}{2}x^2} p_{N}p_{N-1} & = -\frac{1}{\sqrt{N}} K_{N}^{\prime} ,
\\
  e^{-\frac{1}{2}x^2} p_{N}^{\prime}p_{N-1} & =  \frac{1}{2\sqrt{N}}\left[ K_{N}-xK_{N}^{\prime}-K_{N}^{\prime\prime} \right] ,
\\
  e^{-\frac{1}{2}x^2} p_{N}p_{N-1}^{\prime} & = -\frac{1}{2\sqrt{N}}\left[ K_{N}+xK_{N}^{\prime}+K_{N}^{\prime\prime} \right] .
\end{align}
Substituting these into the fourth relation gives \eqref{GUE_densityODE}. The inhomogeneous relation
now follows from the sequence of steps
\begin{multline}
  0 = \int^{\infty}_{-\infty}dx \frac{1}{z-x}\left[ \rho_{(1)}^{\prime\prime\prime}+(4N-x^2)\rho_{(1)}^{\prime}+x\rho_{(1)} \right]
    = \int^{\infty}_{-\infty}dx \frac{\rho_{(1)}^{'''}}{z-x}
\\
   +(4N-z^2)\int^{\infty}_{-\infty}dx \frac{\rho_{(1)}^{'}}{z-x} + \int^{\infty}_{-\infty}dx\, (z+x)\rho_{(1)}^{'}
     + z\int^{\infty}_{-\infty} dx \, \frac{\rho_{(1)}}{z-x} - \int^{\infty}_{-\infty} d x \, \rho_{(1)} .
\end{multline}
Now we integrate by parts the first three terms using $ \partial_x (z-x)^{-1} = -\partial_z (z-x)^{-1} $ and
assuming $ z \notin \mathbb{R} $, $ \rho_{(1)}(x) $, $ x\rho_{(1)}(x) $, $ \rho_{(1)}^{\prime}(x) $ and
$ \rho_{(1)}^{\prime\prime}(x) $ all vanish sufficiently rapidly as $ x \to \pm\infty $. This is justified
because our solution to \eqref{GUE_densityODE} is the single one, out of the three possible, that possesses exponential
decay at the boundaries. This also justifies our interchange of derivative and integral as the integrals
are uniformly and absolutely convergent. We find that the only boundary terms remaining on the right-hand side
are two copies of the normalisation integral, and thus \eqref{GUEresolventODE} follows once the bulk scaling
$ x\mapsto \sqrt{\frac{N}{g}}x $ is re-instated.
\end{proof}
\begin{remark}
As a consistency check we observe that the $ 1/N $ expansion of the resolvent \eqref{resolventN} along with
coefficients \eqref{resolvent_0}-\eqref{resolvent_6}, under the specialisation $ \kappa \to 1 $, identically satisfies
\eqref{GUEresolventODE} up to the error term of $ {\rm O}(N^{-8}) $.
\end{remark}

As one can see the third-derivative term in \eqref{GUEresolventODE} can be interpreted as a correction
term in the large $ N $ expansion, so this relation can serve to generate successive terms in such an
expansion.
An explicit large $ N $ expansion for the resolvent was found in Prop. 4.5 of \cite{HT_2012}, along with a
recurrence for the coefficients. Let
\begin{equation}
   \frac{g}{N}W_1(x) = \eta_0(x)+\frac{\eta_1(x)}{N^2}+ \ldots +\frac{\eta_{k}(x)}{N^{2k}}+{\rm O}(N^{-2k-2}),
\quad k \in \mathbb{N} ,
\label{GUEresolventEXP}
\end{equation}
where
\begin{equation}
  \eta_{0}(x) = \tfrac{1}{2}\left[ x-\sqrt{x^2-4g} \right] ,
\end{equation}
and
\begin{equation}
   \eta_{j}(x) = \sum^{3j-1}_{r=2j} C_{j,r}(x^2-4g)^{-r-1/2} , \quad j \in \mathbb{N} .
\label{GUEeta}
\end{equation}
Then for $ 2j+2\leq r \leq 3j+2 $ we have
\begin{equation}
   C_{j+1,r} = g^2\frac{(2r-3)(2r-1)}{r+1}\left[ (r-1)C_{j,r-2}+g(4r-10)C_{j,r-3} \right] ,
\label{GUErecurC}
\end{equation}
and $ C_{j,2j-1} = C_{j,3j} = 0 $.

In Haagerup and Thorbj{\o}rnsen \cite{HT_2003} and Ledoux's works \cite{Le_2009, Le_2004} a linear ordinary
differential equation was derived for the exponential generating function
(\ref{EXPGF}).
In Haagerup and Thorbj{\o}rnsen's \cite{HT_2003} notation we have $ c(p,N) = m_{2p}(N,1) $ whilst in Ledoux's
notation \cite{Le_2009} $ a_{p}^{N} \equiv m_{2p}(N,1) $.
We give an alternative proof of this result using the result of Theorem \ref{GUEWode}.
\begin{theorem}[Eqs. (0.2, 2.16) of \cite{HT_2003}, Eq. (18) of \cite{Le_2009}]\label{GUEu}
The GUE exponential generating function $ u(t,N) $ satisfies the differential equation
\begin{equation}
  t u^{\prime \prime} + 3u^{\prime} - t(t^{2} + 4N)u =0 ,
\label{GUE_uODE}
\end{equation}
subject to the boundary conditions
\begin{equation}
  u(t,N) \mathop{\sim}_{t \rightarrow 0 } N + \frac{N^{2}}{2!} t^{2} + \cdots .
\label{GUE_bcODE}
\end{equation}
The solution defined above is
\begin{equation}
  u(t,N) = N e^{-t^{2}/2}{}_{1}F_{1}(1+N,2;t^{2}) ,
\label{GUE_solnODE}
\end{equation}
where $ {}_{1}F_{1}(1+N,2;t^{2}) $ is the regular confluent hypergeometric function \cite{DLMF}.
\end{theorem}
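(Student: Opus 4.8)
The plan is to transfer the homogeneous third-order equation \eqref{GUE_densityODE} for the density, established in Theorem \ref{GUEWode}, onto the generating function $u$ via the integral representation
\[
   u(t,N) = \int_{-\infty}^{\infty} e^{tx}\, \rho_{(1)}(x)\, dx , \qquad \rho_{(1)}(x) = K_N(x,x) ,
\]
which follows from \eqref{EXPGF} by expanding $e^{tx}$ and integrating term by term; this is legitimate (and incidentally shows $u$ is entire in $t$) because $\rho_{(1)}(x)$ is a polynomial times $e^{-x^2/2}$, so $e^{|tx|}\rho_{(1)}(x)$ is integrable for every $t$. Note that one cannot instead invoke the Laplace-transform relation \eqref{laplaceXfm} directly, since that one-sided integral diverges; working with the two-sided integral against $\rho_{(1)}$ circumvents the issue.

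First I would multiply \eqref{GUE_densityODE} by $e^{tx}$, integrate over $x \in \mathbb{R}$, and integrate by parts, all boundary terms vanishing since $\rho_{(1)}$ and its first two derivatives are polynomials times $e^{-x^2/2}$. Using $\partial_x e^{tx} = t e^{tx}$ and $\partial_t^k e^{tx} = x^k e^{tx}$ one finds
\[
  \int_{-\infty}^{\infty} e^{tx}\rho_{(1)}'''\,dx = -t^3 u , \qquad
  \int_{-\infty}^{\infty} e^{tx}\,x\,\rho_{(1)}\,dx = u' ,
\]
and, after a single integration by parts of $\int x^2 e^{tx}\rho_{(1)}'\,dx = -2u' - t u''$,
\[
  \int_{-\infty}^{\infty} e^{tx}\,(4N-x^2)\rho_{(1)}'\,dx = -4Nt\,u + 2u' + t u'' .
\]
Adding these three contributions and equating the total to zero gives precisely $t u'' + 3u' - t(t^2+4N)u = 0$, which is \eqref{GUE_uODE}. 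The boundary conditions \eqref{GUE_bcODE} are then read off from the series \eqref{EXPGF} together with the moment values \eqref{moment0}, \eqref{moment2} at $\kappa = 1$: $u(0,N) = m_0 = N$, $u'(0,N) = 0$ (odd moments vanish), and $u''(0,N) = m_2 = N^2$.

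Finally, to obtain the closed form \eqref{GUE_solnODE} I would substitute $u(t,N) = e^{-t^2/2} w(t^2)$ into \eqref{GUE_uODE}. After cancelling an overall factor $4t\, e^{-t^2/2}$ and setting $s = t^2$, the equation collapses to Kummer's confluent hypergeometric equation
\[
   s\, w'' + (2-s)\, w' - (1+N)\, w = 0 ,
\]
whose solution analytic at $s = 0$ is, up to a multiplicative constant, ${}_1F_1(1+N,2;s)$; the constant is fixed by $u(0,N) = N$. I expect the only slightly delicate points to be the justification of the vanishing of the boundary terms in the integration by parts, and a uniqueness remark for the last step: $t = 0$ is a regular singular point of \eqref{GUE_uODE} with indicial exponents $0$ and $-2$ differing by an integer, so one must observe that the solution being selected is exactly the honest, log-free power series $u$ of \eqref{EXPGF}, hence the unique analytic solution taking the value $N$ at the origin.
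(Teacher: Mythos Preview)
Your argument is correct but follows a genuinely different route from the paper. The paper starts from the \emph{inhomogeneous} resolvent equation \eqref{GUEresolventODE} and the one-sided Laplace relation \eqref{laplaceXfm} between $u$ and $W_1$; it substitutes $W_1(x)=\int_0^\infty e^{-\sqrt{N/g}\,xt}u(t)\,dt$ (formally), converts powers of $x$ into $t$-derivatives of the exponential, integrates by parts in $t$, and observes that the lower boundary contributions $u(0)=N$, $u'(0)=0$ exactly cancel the inhomogeneous term $2N$, leaving $(tu)''+u'-t^3u-4Ntu=0$ under the integral. You instead integrate $e^{tx}$ against the \emph{homogeneous} density equation \eqref{GUE_densityODE} over $\mathbb{R}$ and integrate by parts in $x$; no inhomogeneity ever appears, and the boundary terms vanish because $\rho_{(1)}$ has Gaussian decay. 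The principal advantage of your approach is analytic: as the paper itself notes in the Remark following the theorem, the one-sided Laplace integral actually diverges, so the paper's derivation is formal, whereas your two-sided integral $\int_{\mathbb{R}} e^{tx}\rho_{(1)}(x)\,dx$ is absolutely convergent for every $t$ and all the manipulations are justified outright. The paper's route, on the other hand, makes the mechanism of the inhomogeneity-to-boundary-data cancellation explicit, which is the template it reuses for the GOE and GSE cases. Your identification of the closed form via the substitution $u=e^{-t^2/2}w(t^2)$ reducing to Kummer's equation $sw''+(2-s)w'-(1+N)w=0$ matches the paper's concluding observation, and your remark on the indicial exponents $0,-2$ at $t=0$ is a welcome sharpening of the uniqueness argument.
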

\begin{proof}
We start with the Laplace transform \eqref{laplaceXfm} and compute the difference of the left-hand side
and right-hand sides of \eqref{GUEresolventODE}, yielding
\begin{equation}
 0 = \sqrt{\frac{N}{g}}\int^{\infty}_{0} dt\; u e^{-\sqrt{\frac{N}{g}}xt} \left[ -\sqrt{\frac{g}{N}}t^3-(4g-x^2)\sqrt{\frac{N}{g}}t+x \right] - 2N .
\end{equation}
Now we employ the identity $ -\sqrt{\frac{N}{g}}x e^{-\sqrt{\frac{N}{g}}xt} = \partial_t e^{-\sqrt{\frac{N}{g}}xt} $
and for higher orders, and integrate by parts which gives us
\begin{equation}
 0 = \int^{\infty}_{0} dt\; e^{-\sqrt{\frac{N}{g}}xt} \left[ (tu)^{''}+u^{'}-t^3u-4Ntu \right]
     + \left[ \left( -\sqrt{\frac{N}{g}}tu-2u-tu^{'} \right)e^{-\sqrt{\frac{N}{g}}xt} \right]^{\infty}_{0} - 2N .
\end{equation}
Assuming the upper limit vanishes for each of the three terms, in some sector $ |{\rm arg}(t)| < \pi $, then
the evaluations $ u(0) = N $, $ u^{'}(0) = 0 $ lead to the cancellation of the inhomogeneous terms.
Thus we have \eqref{GUE_uODE}. In fact \eqref{GUE_uODE}, after removing the factor $ e^{-t^{2}/2} $, is
one of the standard forms of the confluent hypergeometric differential equation, see \S 13.2 of \cite{DLMF},
and only the regular part is admissible because the other solution, $ U(N+1,2,t^2) \sim t^{-2}/N! $
as $ t \to 0 $ and has $ \log(t) $ terms.
\end{proof}

\begin{remark}
As $t \rightarrow +\infty$ with $ |{\rm Arg}(t^2)| \leq \frac{\pi}{2}-\delta $ and $ \delta>0 $
\begin{equation}
   u(t) \sim N  e^{-t^{2}/2} \cdot  e^{t^{2}} \frac{t^{N-1}}{N!} =\frac{t^{N-1}}{(N-1)!} e^{t^{2}/2} ,
\label{GUE_uEXP}
\end{equation}
which implies that the Laplace transform of $ u(t) $ does not exist, unless the integral is taken along a ray
such that $ {\rm Re}(t^2)<0 $.
\end{remark}

A direct consequence of the Theorems \ref{GUEWode} or \ref{GUEu} is that the moments satisfy a linear recurrence relation.
We can give an independent derivation of such a recurrence relation.
\begin{theorem}[\cite{HZ_1986}, Theorem 4.1 of \cite{HT_2003}, Theorem 1 of \cite{Le_2009}]\label{GUErecurrence}
The moments of the GUE satisfy the linear difference equation
\begin{gather}
(p+1)m_{2p} = (4p-2) N m_{2p-2} + (p-1)(2p-1)(2p-3) m_{2p-4} ,
\label{GUE_RR}
\end{gather}
subject to the initial conditions $ m_{0}=N,\ m_{2}=N^{2} $.
\end{theorem}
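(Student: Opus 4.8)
The plan is to obtain the recurrence \eqref{GUE_RR} directly from the linear differential equation \eqref{GUE_uODE} satisfied by the exponential generating function $u(t,N)$, established in Theorem \ref{GUEu}; since that equation was derived from the orthogonal polynomial structure (via Theorem \ref{GUEWode}), this route is genuinely independent of the combinatorial argument of \cite{HZ_1986}. By the definition \eqref{EXPGF} we have the convergent series $u(t,N) = \sum_{p\ge 0} \frac{t^{2p}}{(2p)!}\,m_{2p}$, so the idea is simply to substitute this series into \eqref{GUE_uODE} and read off the vanishing of each coefficient of $t$.

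Carrying this out, term-by-term differentiation gives $u'(t,N) = \sum_{p\ge 1}\frac{t^{2p-1}}{(2p-1)!}\,m_{2p}$ and $u''(t,N) = \sum_{p\ge 1}\frac{t^{2p-2}}{(2p-2)!}\,m_{2p}$, so each of $tu''$, $3u'$, $-t^3 u$ and $-4Ntu$ is a power series supported on the odd powers of $t$. Matching the coefficient of a fixed $t^{2q+1}$ — where $tu''$ and $3u'$ both contribute their $p=q+1$ term, $4Ntu$ its $p=q$ term, and $t^3 u$ its $p=q-1$ term — the condition for $q\ge 1$ reads
\begin{equation*}
  \frac{m_{2q+2}}{(2q)!} + \frac{3\,m_{2q+2}}{(2q+1)!} - \frac{4N\,m_{2q}}{(2q)!} - \frac{m_{2q-2}}{(2q-2)!} = 0 .
\end{equation*}
Multiplying by $(2q+1)!$, using $(2q+1)+3 = 2(q+2)$, dividing by $2$ and relabelling $q = p-1$ produces exactly \eqref{GUE_RR} (noting $2N(2p-1) = (4p-2)N$). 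The initial data $m_0 = N$, $m_2 = N^2$ come from the boundary behaviour \eqref{GUE_bcODE}; the coefficient of $t^1$ already reproduces $m_2 = N^2$, and the $(p-1)$ prefactor of the $m_{2p-4}$ term shows the recurrence is consistent at $p=1$, so it determines all $m_{2p}$ with $p\ge 2$ from $m_0$ and $m_2$.

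I do not anticipate a serious obstacle: the one point needing care is the bookkeeping of the index shifts among the four series — in particular that $t^3 u$ lowers the moment index by two relative to $tu''$ and $3u'$. An alternative derivation, which also serves as a check, is to insert the large-$x$ expansion \eqref{resolventBC} into the resolvent equation \eqref{GUEresolventODE} of Theorem \ref{GUEWode} and equate coefficients of $x^{-2p}$; there the third-derivative term $W_1'''$ lowers the power of $1/x$ by four, and the same arithmetic recovers \eqref{GUE_RR}. One can further confirm the recurrence against the explicit values \eqref{moment0}--\eqref{moment12} specialised to $\kappa = 1$.
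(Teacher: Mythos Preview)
Your proof is correct. The arithmetic checks out: extracting the coefficient of $t^{2q+1}$ from $tu'' + 3u' - t^3 u - 4Ntu = 0$ gives, after multiplying by $(2q+1)!$, the relation $2(q+2)m_{2q+2} = 4N(2q+1)m_{2q} + (2q+1)(2q)(2q-1)m_{2q-2}$, and the substitution $q=p-1$ yields \eqref{GUE_RR} verbatim.

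Your route differs from the paper's. The paper integrates the homogeneous density ODE \eqref{GUE_densityODE} against $x^{2p-1}$ on $\mathbb{R}$ and performs repeated integration by parts, discarding boundary terms by appealing to the exponential decay of $\rho_{(1)}$ and its derivatives. You instead substitute the power series for $u$ into \eqref{GUE_uODE} and read off coefficients. The paper explicitly flags both routes as available (``A direct consequence of the Theorems \ref{GUEWode} or \ref{GUEu}\ldots''), but chooses the density-integration argument. Your approach is arguably cleaner: it is purely algebraic once \eqref{GUE_uODE} is in hand, with no boundary terms to justify. The paper's approach has the compensating virtue of being one step closer to the source, since Theorem \ref{GUEu} is itself derived from Theorem \ref{GUEWode} via a Laplace transform. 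Your suggested alternative --- expanding $W_1$ at infinity and inserting into \eqref{GUEresolventODE} --- is the resolvent-side counterpart of what the paper does on the density side, and would also work.
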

\begin{proof}
Taking the homogeneous form of \eqref{GUEresolventODE} for the density and integrating it against the monomial
$ x^{2p-1} $ we deduce using integration by parts
\begin{align*}
 0 =\; & \int^{\infty}_{-\infty} dx\; x^{2p-1} \left[ x \rho_{(1)}+(4g-x^2) \rho_{(1)}^{'}+\frac{g^2}{N^2} \rho_{(1)}^{'''} \right]
\\
   =\; & m^{*}_{2p} + 4g\left\{ \left[ x^{2p-1}\rho_{(1)} \right]^{\infty}_{-\infty} - (2p-1)\int^{\infty}_{-\infty} dx\;  x^{2p-2}\rho_{(1)} \right\}
\\
     & -\left\{ \left[ x^{2p+1}\rho_{(1)} \right]^{\infty}_{-\infty} - (2p+1)\int^{\infty}_{-\infty} dx\;  x^{2p}\rho_{(1)} \right\}
\\
     & \phantom{m^{*}_{2p} +}
                  + \frac{g^2}{N^2}\left\{ \left[ x^{2p-1}\rho^{''}_{(1)} \right]^{\infty}_{-\infty} - (2p-1)\int^{\infty}_{-\infty} dx\;  x^{2p-2}\rho^{''}_{(1)} \right\} .
\end{align*}
Further use of integration by parts shows
\begin{align*}
 0  = & m^{*}_{2p}-4g(2p-1)m^{*}_{2p-2}+(2p+1)m^{*}_{2p}-\frac{g^2}{N^2}(2p-1)(2p-2)(2p-3) m^{*}_{2p-4}
\\
     & \phantom{m^{*}_{2p}}
                  + \Big[ 4gx^{2p-1}\rho^{}_{(1)}-x^{2p+1}\rho^{}_{(1)}
\\
     & \phantom{m^{*}_{2p} +}
                  +\frac{g^2}{N^2}\left( x^{2p-1}\rho^{''}_{(1)}-(2p-1)x^{2p-2}\rho^{'}_{(1)}+(2p-1)(2p-2)x^{2p-3}\rho^{}_{(1)} \right) \Big]^{\infty}_{-\infty} .
\end{align*}
Now we require our density to satisfy $ x^{2p+1}\rho^{}_{(1)} $, $ x^{2p-2}\rho^{'}_{(1)} $, $ x^{2p-1}\rho^{''}_{(1)} \to 0 $
as $ x\to \pm \infty $ for all $ p \in \mathbb{N}  $, which our solution indeed satisfies.
Adjusting for \eqref{Mscaling} we have \eqref{GUE_RR}.
\end{proof}

Using \eqref{GUE_RR} one can efficiently generate the low-order moments and we record the first seven for
checking purposes
\begin{equation}
\begin{aligned}\label{GUE_moments}
m_{0}  &= N ,
\\
m_{2}  &= N^{2} ,
\\
m_{4}  &= N(2N^{2}+1) ,
\\
m_{6}  &= 5N^{2}(N^{2} + 2) ,
\\
m_{8}  &= 7N(2N^{4} + 10N^{2} + 3) ,
\\
m_{10} &= 21N^{2}(2N^{4}+20N^{2}+23) ,
\\
m_{12} &= 33 N (4N^{6}+70N^{4}+196N^{2}+45) .
\end{aligned}
\end{equation}

Ledoux \cite{Le_2009} has utilised this recurrence relation to derive the large $ N $ behaviour of the moments
up to the first non-zero correction. It is easy to extend this method to obtain more corrections.
\begin{theorem}\label{GUE_largeNmoment}
As $ N \rightarrow \infty $ for fixed $ p $, $ m_{2p}(N,1) = O(N^{p+1})$ . Furthermore
\begin{multline}
  \frac{m_{2p}(N,1)}{C_{p}N^{p+1}} = 1 + \frac{1}{12}(p+1)p(p-1)N^{-2} + \frac{1}{1440}(p+1)p(p-1)(p-2)(p-3)(5p-2)N^{-4}
\\
   + \frac{1}{362880}(p+1)p(p-1)(p-2)(p-3)(p-4)(p-5)(35p^2-77p+12)N^{-6} + O(N^{-8}) .
\label{GUE_expN}
\end{multline}
\end{theorem}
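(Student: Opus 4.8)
The plan is to prove the expansion directly from the linear difference equation \eqref{GUE_RR} of Theorem~\ref{GUErecurrence}, extending the argument Ledoux used for the first correction. The first step is a substitution that strips off the Catalan growth: write $m_{2p}(N,1) = C_p N^{p+1} f_p(N)$ with $C_p = \binom{2p}{p}/(p+1)$ the $p$-th Catalan number. Feeding this into \eqref{GUE_RR} and using the Catalan recurrences $(p+1)C_p = 2(2p-1)C_{p-1}$ and $p\,C_{p-1} = 2(2p-3)C_{p-2}$, the term $(4p-2)N m_{2p-2}$ lines up exactly with the left-hand side and the equation collapses to the remarkably simple three-term recurrence
\begin{equation*}
  f_p = f_{p-1} + \frac{p(p-1)}{4N^{2}}\,f_{p-2}, \qquad f_0 = f_1 = 1 .
\end{equation*}
A one-line induction on $p$ now shows $f_p$ is a polynomial in $N^{-2}$ (of degree $\lfloor p/2\rfloor$), whence $m_{2p}(N,1) = C_p N^{p+1}(1+O(N^{-2})) = O(N^{p+1})$; this also re-derives the termination of the moment expansion recorded in \eqref{1.15}.

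The second step is to expand $f_p = \sum_{g\ge 0}\varepsilon_g(p)N^{-2g}$ with $\varepsilon_0 \equiv 1$ and observe that the recurrence decouples across orders:
\begin{equation*}
  \varepsilon_g(p) = \varepsilon_g(p-1) + \tfrac14 p(p-1)\,\varepsilon_{g-1}(p-2), \qquad \varepsilon_g(0)=\varepsilon_g(1)=0 \quad (g\ge1),
\end{equation*}
which telescopes to the closed form $\varepsilon_g(p) = \tfrac14\sum_{j=2}^{p} j(j-1)\,\varepsilon_{g-1}(j-2)$. To keep the ensuing computation light I would first establish, by induction on $g$, two structural facts: (i) $\varepsilon_g$ is a polynomial in $p$ of degree $3g$; and (ii) $\varepsilon_g$ vanishes at the $2g+1$ consecutive integers $p = -1,0,1,\dots,2g-1$. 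Fact (ii) follows because the increment $\varepsilon_g(p)-\varepsilon_g(p-1) = \tfrac14 p(p-1)\varepsilon_{g-1}(p-2)$ vanishes for $p \in \{-1,0,1,\dots,2g-1\}$ by the inductive hypothesis on the zeros of $\varepsilon_{g-1}$ together with the factor $p(p-1)$, and combining this with $\varepsilon_g(1)=0$ forces the asserted zeros. Facts (i)--(ii) together give $\varepsilon_g(p) = \frac{(p+1)!}{(p-2g)!}\,r_g(p)$ with $r_g$ a polynomial of degree $g-1$, so the last step is merely to pin down the $g$ coefficients of $r_g$ from $g$ evaluations of the telescoped sum (say at $p = 2g,\dots,3g-1$), yielding $r_1 = \tfrac1{12}$, $r_2(p) = \tfrac{5p-2}{1440}$ and $r_3(p) = \tfrac{35p^2-77p+12}{362880}$, which is precisely the claimed expansion up to and including the $N^{-6}$ term (the remainder being the genuinely $O(N^{-8})$ tail $\sum_{g\ge4}\varepsilon_g(p)N^{-2g}$).

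There is no serious obstacle: the whole argument is elementary once the clean recurrence is in hand, and the only place demanding care is the bookkeeping in the final step, where one must keep the falling-factorial part $\frac{(p+1)!}{(p-2g)!}$ separate from $r_g$ so as never to sum a polynomial of unnecessarily high degree, and then verify that the finitely many numerical evaluations reproduce the stated polynomials. Several independent checks are available: the coefficients $5p-2$ and $35p^2-77p+12$ are exactly the $\kappa\to1$ specialisations of the factors appearing in \eqref{Mcoeff:4th} and \eqref{Mcoeff:6th}; the expansion must agree order by order with the solution \eqref{GUEresolventEXP} of the differential equation \eqref{GUEresolventODE}; and the low moments \eqref{GUE_moments} can be matched directly. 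A route via the confluent hypergeometric representation \eqref{GUE_solnODE} of the generating function is also possible, but extracting exact rational coefficients from a uniform asymptotic analysis is less convenient than the recursion, so I would not pursue it.
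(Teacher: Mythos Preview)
Your proposal is correct and follows essentially the same route as the paper: both substitute $m_{2p} = C_p N^{p+1}X_p$ to reduce \eqref{GUE_RR} to the clean recurrence $X_p = X_{p-1} + \tfrac{p(p-1)}{4N^2}X_{p-2}$ and then solve the resulting first-order inhomogeneous difference equations for the coefficient of each power of $N^{-2}$, with your structural observation about the zeros of $\varepsilon_g$ being a modest refinement of the paper's direct telescoping. One trivial slip to fix: the increment $\tfrac14 p(p-1)\varepsilon_{g-1}(p-2)$ does \emph{not} vanish at $p=-1$ (for $g=2$ it equals $-1$), but this is harmless since its vanishing at $p=0$ already propagates the zero from $\varepsilon_g(0)$ back to $\varepsilon_g(-1)$.
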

\begin{proof}
We proceed by peeling off successive terms in the large $ N $ development of
\begin{equation*}
  m_{2p} = C_p N^{p+1}\left[ 1 + a_{p}N^{-2}+b_{p}N^{-4}+c_{p}N^{-6} + \ldots \right] ,
\end{equation*}
the odd orders are not present as one can see from the substitution
$ m_{2p} = \frac{(2p)!}{p!(p+1)!}N^{p+1}X_{p} $ which yields the difference equation
$ X_{p}=X_{p-1}+\frac{p(p-1)}{4N^2}X_{p-2} $. At the $ N^{p-1} $ order we find the first order
inhomogeneous difference equation $ p-p^2-4a_{p-1}+4a_{p} = 0 $ which is solved with
$ a_{1} = 0 $ to give $  a_{p} = \frac{1}{12}(p^3-p) $. Using this solution we
find at the $ N^{p-3} $ order another first order
inhomogeneous difference equation $ p(p-1)^2(p-2)(p-3)+48b_{p-1}-48b_{p} = 0 $
subject to $ b_{3} = 0 $. Its solution is $ b_{p} = \frac{1}{1440}(p+1)p(p-1)(p-2)(p-3)(5p-2) $.
Again using these preceding solutions we have at the $ N^{p-5} $ order
$ -p(p-1)^2(p-2)(p-3)(p-4)(p-5)(5p-12)-5760c_{p-1}+5760c_{p} = 0 $ with $ c_{5} = 0 $. The
solution is $ c_{p} =\frac{1}{362880}(p+1)p(p-1)(p-2)(p-3)(p-4)(p-5)(35p^2-77p+12) $.
\end{proof}

\begin{remark} This agrees with Theorem \ref{T19}  in the case $ h=0 $, $ \kappa=1 $.
\end{remark}

An explicit formula for the moments is given in Mehta \cite{Mehta_3rd}, see \S 6.5.6, and also by Mezzadri
and Simm 2011 \cite{MS_2011}, see Theorem 2.9,
Equations (31) and (32). In their notations our moment can be written $ m_{2p}(N,1) \equiv C(p,N) $ and
$ m_{2p}(N,1) \equiv 2^{p}M_{G}^{(2)} (2p,N) $.
\begin{theorem}[\cite{Mehta_3rd},\cite{MS_2011}]
For all $ N>0 $, $ p \in \mathbb{Z}_{\geq 0} $ the GUE moments are given by Mehta's evaluation
\begin{equation}
  m_{2p}(N,1) = \frac{(2p)!}{2^{p}p!}N{}_{2}F_{1}(-p,1-N;2;2)
              = \frac{(2p)!}{2^{p}p!}\sum^{p}_{j=0}{p \choose j}{N \choose j+1}2^j ,
\label{GUEevaluation_mehta}
\end{equation}
or by Mezzadri and Simm's
\begin{equation}
  m_{2p}(N,1) =
  \begin{cases} \displaystyle
        \frac{2^{N+p} \Gamma\left( \frac{N}{2} + 1 \right) \Gamma\left( \frac{N}{2} \right) }{\pi^{1/2} (2p+1) \Gamma(N)  }
        \sum_{j=0}^{\min \left( \frac{N}{2}-1,p \right) } { p \choose j } { p+1 \choose j +1}\left( \frac{N}{2} - j \right)_{p + \frac{1}{2} }, & N \mbox{ even,} \\
  \\ \displaystyle
        \frac{2^{N+p} \Gamma\left( \frac{N+1}{2} \right)^{2} }{\pi^{1/2} (2p+1) \Gamma(N)  }
        \sum_{j=0}^{\min \left( \frac{N-1}{2},p \right) } { p \choose j } { p+1 \choose j }
        \left( \frac{N+1}{2}-j \right)_{p+\frac{1}{2} },  & N \mbox{ odd.}
  \end{cases}
\label{GUE_finitesum}
\end{equation}
\end{theorem}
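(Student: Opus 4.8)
The plan is to obtain Mehta's evaluation directly from the hypergeometric form of the exponential generating function established in Theorem~\ref{GUEu}, and then to recover the Mezzadri--Simm form from it. By \eqref{EXPGF} the moments are Taylor coefficients, $m_{2p}(N,1) = (2p)!\,[t^{2p}]\,u(t,N)$, so everything reduces to extracting the coefficient of $t^{2p}$ in $u(t,N) = N e^{-t^{2}/2}\,{}_{1}F_{1}(1+N,2;t^{2})$. The single idea that makes this immediate is Kummer's transformation ${}_{1}F_{1}(a,b;z) = e^{z}\,{}_{1}F_{1}(b-a,b;-z)$, which rewrites the generating function as $u(t,N) = N e^{t^{2}/2}\,{}_{1}F_{1}(1-N,2;-t^{2})$; multiplying the two power series, collecting the coefficient of $t^{2p}$, and using $(2)_{k} = (k+1)!$ together with the elementary identity $p!/(p-k)! = (-1)^{k}(-p)_{k}$ gives
\begin{equation*}
m_{2p}(N,1) = \frac{(2p)!}{2^{p}\,p!}\,N\sum_{k=0}^{p}\frac{(-p)_{k}\,(1-N)_{k}}{(2)_{k}\,k!}\,2^{k} = \frac{(2p)!}{2^{p}\,p!}\,N\,{}_{2}F_{1}(-p,1-N;2;2),
\end{equation*}
which is Mehta's evaluation. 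The second equality in the theorem is then only a rewriting of Pochhammer symbols: $(-p)_{k} = (-1)^{k}p!/(p-k)!$, $(1-N)_{k} = (-1)^{k}(N-1)!/(N-1-k)!$ as a polynomial identity in $N$, and $(2)_{k} = (k+1)!$, so that $N\,(-p)_{k}(1-N)_{k}2^{k}/[(2)_{k}k!] = \binom{p}{k}\binom{N}{k+1}2^{k}$.

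An instructive alternative, which also explains the appearance of the binomial coefficients, starts from the kernel representation \eqref{GUE_densitykernel}, \eqref{OPSkernel}: since $K_{N}(x,x) = e^{-x^{2}/2}\sum_{n=0}^{N-1}p_{n}(x)^{2}$ with the $p_{n}$ normalised as in \eqref{normOP} (Christoffel--Darboux together with \eqref{OPS_D}), one has $m_{2p}(N,1) = \sum_{n=0}^{N-1}\langle n|\,X^{2p}\,|n\rangle$, where $X$ is the tridiagonal Jacobi matrix of multiplication by $x$ read off from \eqref{OPS_3trr} (zero diagonal, off-diagonal entries $\sqrt{n}$). Writing $X = A + A^{*}$ with $A|n\rangle = \sqrt{n}\,|n-1\rangle$, a short count of the resulting closed walks --- equivalently the classical single-Hermite moment integral --- gives $\langle n|X^{2p}|n\rangle = \tfrac{(2p)!}{2^{p}p!}\,{}_{2}F_{1}(-p,-n;1;2) = \tfrac{(2p)!}{2^{p}p!}\sum_{j}\binom{p}{j}\binom{n}{j}2^{j}$, and summing over $n$ with the hockey-stick identity $\sum_{n=j}^{N-1}\binom{n}{j} = \binom{N}{j+1}$ reproduces Mehta's finite sum. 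As a further independent check, one may verify that the right-hand side of Mehta's formula satisfies the recurrence of Theorem~\ref{GUErecurrence} together with $m_{0} = N$, $m_{2} = N^{2}$; this reduces to a standard three-term contiguous relation for ${}_{2}F_{1}(-p,1-N;2;2)$ contiguous in the parameter $-p$.

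It remains to pass from Mehta's finite sum to the Mezzadri--Simm form, and this is where the only real work lies. Writing $\binom{N}{j+1} = \Gamma(N+1)/[\Gamma(j+2)\,\Gamma(N-j)]$ and applying the Legendre duplication formula $\Gamma(2z) = 2^{2z-1}\pi^{-1/2}\Gamma(z)\Gamma(z+\tfrac12)$ to $\Gamma(N+1)$, and after a shift of index to $\Gamma(N-j)$ as well, introduces the arguments $N/2$ and $(N\pm1)/2$ together with half-integer Pochhammers whose pattern depends on the parity of $N$ --- which is precisely why two cases appear. Reindexing the summation so that its upper limit becomes $\min(\lfloor N/2\rfloor,p)$ (respectively $\min(\lfloor (N-1)/2\rfloor,p)$), the half-integer factors coalesce into the single Pochhammer $(\tfrac{N}{2}-j)_{p+1/2}$ (respectively $(\tfrac{N+1}{2}-j)_{p+1/2}$), while the surviving integer factors assemble into $\binom{p}{j}\binom{p+1}{j+1}$ (respectively $\binom{p}{j}\binom{p+1}{j}$) and the displayed Gamma prefactor. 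The bookkeeping of the index ranges and of which binomial coefficient survives in each parity class is the fiddly point and the main obstacle of the whole proof; if one prefers, the Mezzadri--Simm form may simply be quoted from \cite{MS_2011}. All the identities above hold for general $N>0$ by analytic continuation, so positivity of $N$ is all that is really used, integrality entering only where the displayed sums visibly terminate.
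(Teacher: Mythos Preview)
The paper does not actually supply a proof of this theorem: it is stated as a cited result from Mehta's book and from Mezzadri--Simm, with no argument given. Your proposal therefore cannot be ``compared'' with the paper's proof in the usual sense --- you are providing a derivation where the paper simply quotes the literature.

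That said, your derivation of Mehta's evaluation is correct and clean. Starting from the explicit solution \eqref{GUE_solnODE} of Theorem~\ref{GUEu}, applying Kummer's transformation to turn $e^{-t^{2}/2}{}_{1}F_{1}(1+N,2;t^{2})$ into $e^{t^{2}/2}{}_{1}F_{1}(1-N,2;-t^{2})$, and then reading off the coefficient of $t^{2p}$ as a Cauchy product gives exactly the terminating ${}_{2}F_{1}$ you write down; the Pochhammer-to-binomial rewriting for the second equality is likewise routine and correct. Your alternative route via $K_{N}(x,x)=\sum_{n=0}^{N-1}e^{-x^{2}/2}p_{n}(x)^{2}$ and the hockey-stick identity is also valid and is essentially the argument one finds in Mehta.

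The one soft spot is the passage to the Mezzadri--Simm form \eqref{GUE_finitesum}. You correctly identify the mechanism (Legendre duplication on the $\Gamma$-factors, parity of $N$ dictating which half-integer Pochhammer appears), but you do not actually carry out the reindexing, and you concede as much by offering to quote \cite{MS_2011} instead. Since the paper itself does no more than cite \cite{MS_2011} for this formula, this is not a deficiency relative to the paper; but if you intend your write-up to be a self-contained proof of both displays, that step would need to be filled in.
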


\begin{remark}
In a well-known work Harer and Zagier \cite{HZ_1986} found a simple result for the
generating function (\ref{subEXPGF}) of the GUE moments
\begin{equation}
   \phi(s,N) = \frac{1}{2s^2}\left[ \left(\frac{1+s^2}{1-s^2}\right)^{N}-1 \right] ,
\end{equation}
which follows directly from the definition \eqref{subEXPGF} and the evaluation \eqref{GUEevaluation_mehta}. The
latter explicit formula agrees with the specialisation of $ \kappa=1 $ in the cases $ 0\leq p\leq 6 $ of \eqref{moment0}-\eqref{moment12}.
\end{remark}

\subsection{$\kappa = 1/2$ GOE Moments}
We revise the well-known explicit result for the density of eigenvalues in the orthogonal Gaussian ensemble,
as given in \S 4, p. 158, 9  of \cite{AFNvM_2000} (after correcting for the typographical error), and adapted
to our slightly differing conventions. From this work we deduce
\begin{multline}
   \rho_{(1)}(x,N) = \sqrt{\frac{N}{g}} \left\{
    K_{N}(x,x) + \frac{e^{-\frac{1}{4}x^2}H_{N-1}(2^{-1/2}x)}{\sqrt{\pi}2^{N+2}(N-1)!}\int^{\infty}_{-\infty}dt\;{\rm sgn}(x-t)e^{-\frac{1}{4}t^2}H_{N}(2^{-1/2}t)
                                        \right.
\\                                      \left.
               + \chi_{N\in 2\mathbb{N}+1}\frac{e^{-\frac{1}{4}x^2}H_{N-1}(2^{-1/2}x)}{\int^{\infty}_{-\infty}dt\;e^{-\frac{1}{4}t^2}H_{N-1}(2^{-1/2}t)}
                                        \right\}_{x\mapsto \sqrt{\frac{N}{g}}x} ,
\label{GOE_density}
\end{multline}
where $ K_{N}(\cdot,\cdot) $ is given by the kernel \eqref{OPSkernel}.

The orthogonal analogue of \eqref{GUEresolventODE} is a fifth order ordinary differential equation for the resolvent.
\begin{theorem}\label{GOEWode}
The resolvent $ W_{1}(x) $ for the GOE satisfies the fifth order, linear inhomogeneous ordinary differential equation
\begin{multline}
  -4\frac{g^4}{N^4}W_{1}^{(V)}+5\left[ x^2-(4N-2)\frac{g}{N} \right]\frac{g^2}{N^2}W_{1}^{\prime\prime\prime}-6\frac{g^2}{N^2}xW_{1}^{\prime\prime}
\\
  +\left[ -x^4+(8N-4)\frac{g}{N}x^2+(-16N^2+16N+2)\frac{g^2}{N^2} \right]W_{1}^{\prime}+x\left[ x^2-(4N-2)\frac{g}{N} \right]W_{1}
\\
  = 2 N (x^2-4 g)+10g ,
\label{GOEresolventODE}
\end{multline}
subject to the boundary conditions \eqref{resolventBC}, for fixed $ g, N $, and the moments are given by \eqref{GOE_moments}.
\end{theorem}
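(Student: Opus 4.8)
The plan is to follow the template of Theorem~\ref{GUEWode}: first derive the homogeneous fifth order equation for the density $\rho_{(1)}$, then convert it into the inhomogeneous equation for the resolvent by means of the Cauchy transform together with integration by parts. Throughout I would work with the unscaled version (replace $g\mapsto N$, as already done for \eqref{GUE_densityODE}) and reinstate the bulk scaling $x\mapsto\sqrt{N/g}\,x$ only at the end. The starting point is the explicit GOE density \eqref{GOE_density}, which presents $\rho_{(1)}$ as $K_{N}(x,x)$ plus a correction built from the Hermite function $\phi_{N-1}(x):=e^{-x^{2}/4}H_{N-1}(x/\sqrt2)$ and the incomplete integral $\Psi(x):=\int_{-\infty}^{\infty}{\rm sgn}(x-t)\,\phi_{N}(t)\,dt$ (and, when $N$ is odd, a further term proportional to $\phi_{N-1}$). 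I would first record the elementary facts needed to close the system: $\phi_{n}$ satisfies $4\phi_{n}''+(4n+2-x^{2})\phi_{n}=0$; the Hermite three term recurrence and derivative formula give $\phi_{N}=\tfrac{x}{\sqrt2}\phi_{N-1}-\sqrt2\,\phi_{N-1}'$; and $\Psi'=2\phi_{N}$. Using the first of these, the two exponential prefactors in $K_{N}(x,x)=\sqrt N e^{-x^{2}/2}(p_{N}'p_{N-1}-p_{N}p_{N-1}')$ cancel against each other, so that $K_{N}(x,x)$ is a polynomial combination of $\phi_{N-1}^{2}$ and $(\phi_{N-1}')^{2}$ --- the exact GOE analogue of the fact that the GUE density lies in the symmetric square of the Hermite operator.

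The structural heart of the argument is to exhibit, for each fixed $N$, a five generator $\C[x]$-module of functions that contains $\rho_{(1)}$ and is closed under $d/dx$. For $N$ even this is the module generated by $\{\phi_{N-1}^{2},\ \phi_{N-1}\phi_{N-1}',\ (\phi_{N-1}')^{2},\ \phi_{N-1}\Psi,\ \phi_{N-1}'\Psi\}$; closure follows because differentiating the last two generators produces, via $\Psi'=\sqrt2 x\phi_{N-1}-2\sqrt2\phi_{N-1}'$ and the Hermite equation, only $\C[x]$-combinations of the five. For $N$ odd the integral $\Psi$ telescopes --- repeatedly using $x\phi_{n}=-2\phi_{n}'+2\sqrt2\,n\,\phi_{n-1}$ together with $\phi_{0}=e^{-x^{2}/4}$ --- down to a polynomial combination of $\phi_{N-1}$ and $\phi_{N-1}'$, so that $\phi_{N-1}\Psi$ collapses into the quadratic part and the fifth generator is instead supplied by the explicit $\phi_{N-1}$ term, giving the (again $d/dx$-closed) module generated by $\{\phi_{N-1}^{2},\ \phi_{N-1}\phi_{N-1}',\ (\phi_{N-1}')^{2},\ \phi_{N-1},\ \phi_{N-1}'\}$. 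In either case one writes $\rho_{(1)},\rho_{(1)}',\dots,\rho_{(1)}^{(5)}$ in the five generators, eliminates them, and obtains a fifth order linear differential equation with polynomial coefficients; one then checks that this equation is the homogeneous part of \eqref{GOEresolventODE} and, in particular, that the two parity computations return the same polynomial-coefficient operator (equivalently, one may verify directly that the operator in \eqref{GOEresolventODE} annihilates each of $K_{N}(x,x)$, $\phi_{N-1}(x)\Psi(x)$ and $\phi_{N-1}(x)$ after reduction modulo the Hermite equation and $\Psi'=2\phi_{N}$). That the leading coefficient $-4g^{4}/N^{4}$ is a nonzero constant records the (pleasant) absence of spurious finite singularities.

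Finally, to pass from the density equation to the resolvent equation I would repeat the GUE step: multiply the homogeneous density equation (in its $g\mapsto N$ form) by $(z-x)^{-1}$, integrate over $x\in\R$, and integrate by parts repeatedly using $\partial_{x}(z-x)^{-1}=-\partial_{z}(z-x)^{-1}$, re-expressing the polynomial coefficients in $z$ via $4N-x^{2}=(4N-z^{2})+(z-x)(z+x)$ and so on. Since the density, built from $\phi_{N-1}$-type functions times bounded factors, decays at $\pm\infty$ faster than any power, the bulk of the integrand reassembles into the differential operator of \eqref{GOEresolventODE} applied to $W_{1}$, and the only surviving contributions are boundary terms forced by the polynomial coefficients; assembling these and reinstating the bulk scaling produces the right hand side $2N(x^{2}-4g)+10g$, while the large $x$ expansion \eqref{resolvent} supplies the boundary conditions \eqref{resolventBC}. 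The main obstacle I anticipate is the parity reconciliation in the structural step --- showing that the two a priori different five dimensional solution spaces arising for even and odd $N$ are annihilated by one and the same operator, which hinges on the telescoping simplification of $\Psi$ for odd $N$ --- together with the longer but routine bookkeeping of boundary terms in the integration by parts, which now involves quartic rather than quadratic polynomial coefficients.
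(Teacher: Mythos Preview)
Your plan is correct and follows the same architecture as the paper: exhibit a five-dimensional $\C[x]$-module, closed under $d/dx$, that contains $\rho_{(1)}$; differentiate five times and eliminate to obtain the homogeneous equation; then push this through the Cauchy transform with integration by parts to pick up the inhomogeneity. The one substantive difference is in the choice of generators and the handling of parity. You work entirely with $\phi_{N-1}$ and split into two cases, using $\{\phi_{N-1}^{2},\phi_{N-1}\phi_{N-1}',(\phi_{N-1}')^{2},\phi_{N-1}\Psi,\phi_{N-1}'\Psi\}$ for $N$ even and $\{\phi_{N-1}^{2},\phi_{N-1}\phi_{N-1}',(\phi_{N-1}')^{2},\phi_{N-1},\phi_{N-1}'\}$ for $N$ odd, which then forces the parity-reconciliation step you flag as the main obstacle. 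The paper instead absorbs the parity dependence into a single constant $A_{N}$ (equal to $-\tfrac14 D_{N}$ or $\tfrac{1}{\sqrt N}D_{N-1}^{-1}$ according to parity) and uses the uniform basis
\[
\bigl\{\,e^{-x^{2}/2}p_{N}p_{N-1},\ e^{-x^{2}/2}p_{N}'p_{N-1},\ e^{-x^{2}/2}p_{N}p_{N-1}',\ e^{-x^{2}/4}[\tfrac12 q_{N}+A_{N}]p_{N-1},\ e^{-x^{2}/4}[\tfrac12 q_{N}+A_{N}]p_{N-1}'\,\bigr\},
\]
where $q_{N}(x)=\int_{-\infty}^{x}e^{-t^{2}/4}p_{N}(t)\,dt$. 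Because $A_{N}$ is a constant it is invisible to $d/dx$, so closure and the subsequent $5\times5$ linear algebra go through once, for all $N$, with the explicit determinant $-\tfrac{9}{8}(N-1)$. Your $\phi_{N-1}$-based basis is a $\C[x]$-change of frame of the same module (via $p_{N}=\tfrac{1}{\sqrt N}(xp_{N-1}-p_{N-1}')$ and $\Psi=2q_{N}-\text{const}$), so the two computations are equivalent; the paper's framing simply sidesteps the telescoping argument for odd $N$ and the need to check that the two parity cases yield the same differential operator.
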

\begin{proof}
Our proof will be a natural extension of the methods adopted in the proof of Theorem \ref{GUEWode}.
As in the proof of that Theorem we will establish the result for the unscaled system ($ g\mapsto N $) to simplify
matters. Thus we first recast \eqref{GOE_density} in terms of the orthonormal polynomials $ \{p_n\}_{n=0}^{\infty} $
\begin{equation}
  \frac{1}{\sqrt{N}}\rho_{(1)} = e^{-\frac{1}{2}x^2} \left[ p_{N}^{\prime}p_{N-1}-p_{N}p_{N-1}^{\prime} \right]
   + e^{-\frac{1}{4}x^2}\left[ \tfrac{1}{2}q_{N}+A_{N} \right]p_{N-1} ,
\end{equation}
where the constant $ A_{n} $ is defined as
\begin{equation}
 A_{n} = -\frac{1}{4}\chi_{n\in 2\mathbb{Z}}D_{n}+\frac{1}{\sqrt{n}}\chi_{n\in 2\mathbb{Z}+1}D^{-1}_{n-1}, \quad
 D_{n} = \pi^{1/4}2^{3/4}\sqrt{\frac{(n-1)!!}{n!!}} .
\end{equation}
The new variable $ q_{n} $ is defined as
\begin{equation}
  q_{n}(x) := \int^{x}_{-\infty} dt\; e^{-\frac{1}{4}t^2}p_{n}(t) ,
\end{equation}
and clearly $ q^{\prime}_{n} = e^{-\frac{1}{4}x^2}p_{n}(x) $. The basis of bilinear products is now five dimensional
with basis
\begin{equation*}
    \{ p_{N}p_{N-1},p_{N}^{\prime}p_{N-1},p_{N}p_{N-1}^{\prime},[\tfrac{1}{2}q_{N}+A_{N}]p_{N-1},[\tfrac{1}{2}q_{N}+A_{N}]p_{N-1}^{\prime} \} .
\end{equation*}
Using the relation for $ q^{\prime}_{n} $ and \eqref{OPS_2ndODE} we compute the first four derivatives of $ \rho_{(1)} $
which we write in matrix form
\begin{multline}
  \frac{1}{\sqrt{N}}
  \begin{pmatrix}
    \rho_{(1)} \\ \rho_{(1)}^{\prime} \\ \rho_{(1)}^{\prime\prime} \\ \rho_{(1)}^{\prime\prime\prime} \\ \rho_{(1)}^{(IV)} \\
  \end{pmatrix} =
\\
  \begin{pmatrix}
   0 & 1 & -1 & 1 & 0 \\
   -\tfrac{1}{2} & 0 & 0 & -\tfrac{1}{2}x & 1 \\
   \tfrac{1}{4}x & -\tfrac{1}{2} & 0 & \tfrac{1}{4}x^2-N+\tfrac{1}{2} & 0 \\
   -\tfrac{1}{8}x^2+\tfrac{1}{2}N+\tfrac{1}{2} & -\tfrac{1}{4}x & \tfrac{1}{4}x & -\tfrac{1}{8}x^3+\tfrac{1}{2}(N+\tfrac{1}{2})x & \tfrac{1}{4}x^2-N+\tfrac{1}{2} \\
   \tfrac{1}{16}x^3-\tfrac{1}{4}(N+\tfrac{3}{2})x & -\tfrac{1}{8}x^2+\tfrac{1}{2}N+\tfrac{1}{4} & 1 & \tfrac{1}{16}x^4-\tfrac{1}{2}(N+\tfrac{1}{2})x^2+N^2-N+\tfrac{3}{4} & x
  \end{pmatrix}
\\ \cdot
  \begin{pmatrix}
   e^{-\frac{1}{2}x^2}p_{N}p_{N-1} \\ e^{-\frac{1}{2}x^2}p_{N}^{\prime}p_{N-1} \\ e^{-\frac{1}{2}x^2}p_{N}p_{N-1}^{\prime} \\ e^{-\frac{1}{4}x^2}[\tfrac{1}{2}q_{N}+A_{N}]p_{N-1} \\ e^{-\frac{1}{4}x^2}[\tfrac{1}{2}q_{N}+A_{N}]p_{N-1}^{\prime}
  \end{pmatrix} .
\end{multline}
This is invertible for $ N>1 $ as the determinant is $ -\tfrac{9}{8}(N-1) $. The fifth derivative is
\begin{multline}
 \frac{1}{\sqrt{N}}\rho_{(1)}^{(V)} =
  \left[ -\tfrac{1}{32}x^4+\tfrac{1}{4}(N+\tfrac{7}{4})x^2-\tfrac{1}{2}N^2-\tfrac{13}{4}N+1 \right]  e^{-\frac{1}{2}x^2}p_{N}p_{N-1}
\\
  + \left[ -\tfrac{1}{16}x^3+\tfrac{1}{4}(N+\tfrac{5}{2})x \right] e^{-\frac{1}{2}x^2}p_{N}^{\prime}p_{N-1}
  + \left[ \tfrac{1}{16}x^3-\tfrac{1}{4}(N-\tfrac{1}{2})x \right]  e^{-\frac{1}{2}x^2}p_{N}p_{N-1}^{\prime}
\\
  + \left[ -\tfrac{1}{32}x^5+\tfrac{1}{4}(N+\tfrac{3}{2})x^3+\tfrac{1}{2}(-N^2-3N+\tfrac{1}{4})x \right]  e^{-\frac{1}{4}x^2}[\tfrac{1}{2}q_{N}+A_{N}]p_{N-1}
\\
  + \left[ \tfrac{1}{16}x^4+\tfrac{1}{2}(-N+\tfrac{1}{2})x^2+N^2-N+\tfrac{7}{4} \right]  e^{-\frac{1}{4}x^2}[\tfrac{1}{2}q_{N}+A_{N}]p_{N-1}^{\prime} .
\end{multline}
Substituting the solution for the bilinear products into this expression we get
\begin{equation}
 -4\rho_{(1)}^{(V)}+5(x^2-4N+2)\rho_{(1)}^{\prime\prime\prime}-6x\rho_{(1)}^{\prime\prime}+[-x^4+(8N-4)x^2-16N^2+16N+2]\rho_{(1)}^{\prime}+x(x^2-4N+2)\rho_{(1)} = 0 .
\end{equation}
Restoring the bulk scaling $ x\mapsto \sqrt{\frac{N}{g}}x $ we have the homogeneous form of \eqref{GOEresolventODE}.
To find the differential equation for $ W_1 $ we repeat the methods employed in the proof of Theorem
\ref{GUEWode}, except that there are more terms to treat. Integrating the homogeneous form of the differential
equation against $ (z-x)^{-1} $ on $ \mathbb{R} $, and performing the subtractions for the $x$-dependent
coefficients we arrive at
\begin{multline}
 0 =  -4\frac{g^4}{N^4}\int^{\infty}_{-\infty}dx\;\frac{\rho_{(1)}^{(V)}}{(z-x)}
       +5\left[ z^2-(4N-2)\frac{g}{N} \right]\frac{g^2}{N^2}\int^{\infty}_{-\infty}dx\;\frac{\rho_{(1)}^{\prime\prime\prime}}{(z-x)}
\\
     -6\frac{g^2}{N^2}z\int^{\infty}_{-\infty}dx\;\frac{\rho_{(1)}^{\prime\prime}}{(z-x)}
      +\left[ -z^4+(8N-4)\frac{g}{N}z^2+(-16N^2+16N+2)\frac{g^2}{N^2} \right]\int^{\infty}_{-\infty}dx\;\frac{\rho_{(1)}^{\prime}}{(z-x)}
\\
       +z\left[ z^2-(4N-2)\frac{g}{N} \right]\int^{\infty}_{-\infty}dx\;\frac{\rho_{(1)}}{(z-x)}
        -5\frac{g^2}{N^2}\int^{\infty}_{-\infty}dx\;(x+z)\rho_{(1)}^{\prime\prime\prime}
\\
       +6\frac{g^2}{N^2}\int^{\infty}_{-\infty}dx\;\rho_{(1)}^{\prime\prime}
        +\int^{\infty}_{-\infty}dx\;[z^3+z^2x+zx^2+x^3-(8N-4)\frac{g}{N}(x+z)]\rho_{(1)}^{\prime}
\\
         +\int^{\infty}_{-\infty}dx\;[-z^2-zx-x^2+(4N-2)\frac{g}{N}]\rho_{(1)} .
\end{multline}
Making similar observations on $ \rho_{(1)} $ concerning its decay as $ x\to \pm\infty $ as we did in the
proof of Theorem \ref{GUEWode} we can conclude
$ \int dx\, (z-x)^{-1}\partial^n_x\rho_{(1)}(x) = \partial^n_z W_1(z) $ for $ 0 \leq n \leq 5 $. Of the four
final terms of the above expression only a few are non-zero and these contribute
$ [ -2z^2+(12N-6)\frac{\displaystyle g}{N}]m_{0}-4\frac{\displaystyle g}{N}m_{2} $.
From the knowledge of the first two moments we deduce the inhomogeneous term and arrive at (\ref{GOEresolventODE}).
\end{proof}
\begin{remark}
As a check we can verify that the $ 1/N $ expansion of the resolvent \eqref{resolventN} along with
coefficients \eqref{resolvent_0}-\eqref{resolvent_6}, under the specialisation $ \kappa \to 1/2 $, identically satisfies
\eqref{GOEresolventODE} up to the error term of $ {\rm O}(N^{-7}) $.
\end{remark}

In \cite{Le_2009} a linear ordinary differential equation was derived for the exponential generating function,
however we can easily re-derive this from the preceding theorem. Here Ledoux's definitions imply $ b_{p}^{N} = m_{2p}(N,1/2) $.
\begin{theorem}[\cite{Le_2009}, Equation (27)]\label{GOEu}
The GOE exponential generating function $ u(t,N) $ satisfies the fourth order linear ordinary differential equation
\begin{multline}
t u^{{\rm (IV)}} + 5 u^{\prime\prime\prime} - t( 5t^{2} + 8N -4) u^{(\prime\prime)}
 - (36 t^{2} + 20N -10) u^{(\prime)} \\
+ t \left[ 4t^{4} + (20N -10)t^{2} + 16N^{2} -16N - 44	\right] u =0 ,
\label{GOE_uODE}
\end{multline}
or equivalently if we define $ U \equiv u^{\prime \prime} - (4t^{2} + 4N -2) u $ then $ U(t) $ satisfies
\begin{equation}
  t U^{\prime  \prime} + 5U^{\prime } -t(t^{2} + 4N -2)U = 0 .
\label{GOE_odeU}
\end{equation}
The solutions are subject to the boundary conditions
\begin{equation}
    u(t) \mathop{\sim}_{t \rightarrow 0 } N + \frac{N(N+1)}{2!} t^{2} + \frac{N(2N^2+5N+5)}{4!} t^{4} + \frac{N(5N^3+22N^2+52N+41)}{6!} t^{6} \cdots .
\label{GOE_bcODE}
\end{equation}
\end{theorem}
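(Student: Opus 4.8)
The plan is to mirror the proof of Theorem~\ref{GUEu}, this time starting from the fifth order resolvent equation \eqref{GOEresolventODE} of Theorem~\ref{GOEWode} in place of \eqref{GUEresolventODE}. I would introduce the Laplace transform representation \eqref{laplaceXfm}, writing $\widetilde{W}(x):=\sqrt{g/N}\,W_{1}(\sqrt{g/N}\,x,N)=\int_{0}^{\infty}e^{-xt}u(t,N)\,dt$, and substitute it (with $x$ rescaled accordingly) into \eqref{GOEresolventODE}. Under the transform an $x$-derivative of $\widetilde{W}$ corresponds to multiplication of $u$ by $-t$, while multiplication of $\widetilde{W}$ by $x^{k}$ is converted, using $x^{k}e^{-xt}=(-1)^{k}\partial_{t}^{k}e^{-xt}$ followed by $k$ integrations by parts, into $\partial_{t}^{k}$ applied to the relevant product of a power of $t$ with $u$, plus boundary evaluations at $t=0$ and $t=\infty$. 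Carrying this out term by term --- the $W_{1}^{(V)}$ term produces $\propto t^{5}u$, the $x^{2}W_{1}^{\prime\prime\prime}$ term produces $\partial_{t}^{2}(t^{3}u)$, the $x^{4}W_{1}^{\prime}$ term produces $\partial_{t}^{4}(tu)$ and hence the top-order piece $t\,u^{(IV)}$, the $x^{3}W_{1}$ term produces $\partial_{t}^{3}u$ and contributes $5u^{\prime\prime\prime}$, and so on --- and collecting all interior contributions, I expect to recover exactly the fourth order operator on the left-hand side of \eqref{GOE_uODE} acting on $u$.

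It then remains to dispose of the boundary terms. At the upper limit I would argue, as in the remark following Theorem~\ref{GUEu} containing \eqref{GUE_uEXP}, that $u(t,N)$ grows like $t^{N-1}e^{t^{2}/2}$ (equivalently, that $\rho_{(1)}$ decays like a Gaussian), so that all boundary evaluations at $t=\infty$ vanish provided the integration is taken along a ray with $\mathrm{Re}(t^{2})<0$, i.e. in a suitable sector $|\arg t|<\pi$. At $t=0$ the surviving boundary terms are polynomials in $x$ whose coefficients are built from $u(0)$, $u^{\prime}(0)$, $u^{\prime\prime}(0)$, $u^{\prime\prime\prime}(0)$; since the GOE weight in \eqref{GbetaEpdf} is even in each eigenvalue, $u(t,N)$ is an even function of $t$, so $u^{\prime}(0)=u^{\prime\prime\prime}(0)=0$, while $u(0)=m_{0}=N$ and $u^{\prime\prime}(0)=m_{2}=N(N+1)$ follow from \eqref{moment0}--\eqref{moment2} at $\kappa=1/2$. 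Substituting these values I expect the $t=0$ boundary terms to cancel the inhomogeneous term $2N(x^{2}-4g)+10g$ of \eqref{GOEresolventODE} identically, leaving the homogeneous equation \eqref{GOE_uODE}. The boundary conditions \eqref{GOE_bcODE} are then simply the first few Taylor coefficients of \eqref{EXPGF} at $\kappa=1/2$, that is, the specialisations of \eqref{moment0}, \eqref{moment2}, \eqref{moment4}, \eqref{moment6}.

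For the equivalent second order form \eqref{GOE_odeU} I would exhibit the relevant factorisation directly. Set $\mathcal{L}_{U}[u]:=u^{\prime\prime}-(4t^{2}+4N-2)u$ and $\mathcal{L}[v]:=t\,v^{\prime\prime}+5v^{\prime}-t(t^{2}+4N-2)v$. Substituting $u^{\prime\prime}=\mathcal{L}_{U}[u]+(4t^{2}+4N-2)u$ together with its derivatives into the left-hand side of \eqref{GOE_uODE} and collecting terms, one checks that it equals $\mathcal{L}\big[\mathcal{L}_{U}[u]\big]$; consequently, if $u$ solves \eqref{GOE_uODE} then $U=\mathcal{L}_{U}[u]$ solves \eqref{GOE_odeU}, the admissible entire branch being fixed by the (entire) initial data for $u$. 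This is a finite polynomial identity and raises no conceptual difficulty; one may further note that removing $e^{-t^{2}/2}$ from \eqref{GOE_odeU} recasts it as a confluent-hypergeometric-type equation with shifted parameters, exactly as at the end of the proof of Theorem~\ref{GUEu}.

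The main obstacle I anticipate is twofold. First, the bookkeeping in the Laplace step: there are many $x$-dependent polynomial coefficients in \eqref{GOEresolventODE}, so keeping track of which power of $t$ each term generates in the interior, and of the exact boundary evaluations it induces at $t=0$, requires care --- in particular one must verify that the $N^{2}$ pieces arising from the $u^{\prime\prime}(0)=N(N+1)$ boundary evaluations cancel among themselves, so that precisely the $2N(x^{2}-4g)+10g$ inhomogeneity is reproduced. Second, as in the GUE proof, the analytic justification that the $t=\infty$ boundary contributions genuinely vanish rests on controlling the growth of $u(t,N)$ and on choosing the integration ray with $\mathrm{Re}(t^{2})<0$. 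Once these two points are handled, the interior cancellation into \eqref{GOE_uODE}, the $t=0$ cancellation against the inhomogeneity, and the operator factorisation yielding \eqref{GOE_odeU} are all routine.
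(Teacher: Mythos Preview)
Your proposal is correct and follows essentially the same approach as the paper: Laplace-transform the fifth order resolvent equation \eqref{GOEresolventODE}, integrate by parts, discard the $t=\infty$ boundary terms by the growth argument, and use $u(0)=N$, $u''(0)=N(N+1)$ so that the $t=0$ boundary contribution $\tfrac{g}{N}\bigl(4u''(0)-(12N-6)u(0)+2\tfrac{N}{g}x^{2}u(0)\bigr)$ cancels the inhomogeneity $2N(x^{2}-4g)+10g$. You actually go a step further than the paper by sketching the operator factorisation $\mathcal{L}\circ\mathcal{L}_{U}$ that yields \eqref{GOE_odeU}; the paper merely asserts this equivalence without proof.
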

\begin{proof}
We utilise the same method as given in the proof of Theorem \ref{GUEu}. In our intermediate step we find
\begin{multline}
 0 = \frac{g}{N} \int^{\infty}_{0}dt\, e^{-\sqrt{\frac{N}{g}}xt}\Big\{ \partial^4_{t}(tu)+\partial^3_{t}u-\partial^2_{t}[5t^3u+(8N-4)tu]-\partial_{t}[6t^2u+(4N-2)u]
\\
     +[4t^5+5(4N-2)t^3-(-16N^2+16N+2)t]u \Big\}
\\
     + \frac{g}{N} \left[ e^{-\sqrt{\frac{N}{g}}xt}\left( -\partial^3_{t}(tu)-\partial^2_{t}u+\partial_{t}(5t^3u+(8N-4)tu)+(6t^2+4N-2)u \right)
                   \right.
\\                 \left.
                      + \partial_{t}e^{-\sqrt{\frac{N}{g}}xt} \left( \partial^2_{t}(tu)+\partial_{t}u-(5t^3u+(8N-4)tu \right)
                      + \partial^2_{t}e^{-\sqrt{\frac{N}{g}}xt}\left( -\partial_{t}(tu)-u \right) + \partial^3_{t}e^{-\sqrt{\frac{N}{g}}xt}(tu)
                   \right]^{\infty}_{0}
\\   -2N(x^2-4g)-10 g.
\end{multline}
Again assuming the upper terminal contribution of the boundary term vanish we compute the lower terminal to be
$ \frac{\displaystyle g}{N}\left(4u^{\prime\prime}(0)-(12N-6)u(0)+2\frac{\displaystyle N}{g}x^2u(0) \right) $.
Using the data for $ m_0, m_2 $ this cancels the inhomogeneous terms from the original differential equation and
we have \eqref{GOE_uODE}.
\end{proof}

Ledoux has also shown that a linear recurrence relation for the moments follows from the above result, which can also be derived
directly from Theorem \ref{GOEWode}.
\begin{theorem}[\cite{Le_2009}, Theorem 2]\label{GOErecurrence}
The GOE moments $ m_{p}(N,1/2) $ satisfy the fourth order, linear difference equation
\begin{multline}
  (p+1)m_{2p} = (4p-1)(2N-1) m_{2p-2} + (2p-3)( 10p^{2} -9p - 8N^{2} + 8N) m_{2p-4}
\\
              - 5(2p-3)(2p-4)(2p-5)(2N-1) m_{2p-6} - 2(2p-3)(2p-4)(2p-5)(2p-6)(2p-7) m_{2p-8} ,
\label{GOE_RR}
\end{multline}
subject to the initial values $ m_{0}=N, \ m_{2}=N(N+1) $ from $ p\geq 2 $.
\end{theorem}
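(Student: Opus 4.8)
The plan is to mirror the proof of Theorem~\ref{GUErecurrence}, with the third order density equation used there replaced by the fifth order \emph{homogeneous} ordinary differential equation for $\rho_{(1)}$ that is produced inside the proof of Theorem~\ref{GOEWode} (namely the homogeneous part of \eqref{GOEresolventODE}, or equivalently its unscaled form with $g\mapsto N$). First I would multiply this equation by the monomial $x^{2p-3}$ and integrate over $\mathbb{R}$. The choice $x^{2p-3}$ is forced by the degrees of the polynomial coefficients: the term $x(x^{2}-\cdots)\rho_{(1)}$ produces $x^{2p}\rho_{(1)}$ and the term $-x^{4}\rho_{(1)}'$ produces, after a single integration by parts, a multiple of the same moment, so the highest moment appearing is $m^{*}_{2p}$; moreover $x^{2p-3}$ is a genuine non-negative-power monomial exactly when $p\ge 2$, which is the source of the restriction $p\ge 2$ in the statement.

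Next I would integrate by parts repeatedly --- up to five times on the $\rho_{(1)}^{(V)}$ term, and correspondingly fewer times on the lower order terms --- discarding all boundary contributions. The vanishing of these boundary terms, which involve $x^{j}\rho_{(1)}^{(i)}$ evaluated at $x=\pm\infty$ for $0\le i\le 4$ and suitable $j$, is justified exactly as in the proof of Theorem~\ref{GUEWode}: rewriting \eqref{GOE_density} in terms of the orthonormal polynomials and the primitive $q_{N}$ as in the proof of Theorem~\ref{GOEWode} shows that $\rho_{(1)}$ and all of its derivatives decay faster than any polynomial at $\pm\infty$, since each differentiation only brings down polynomial factors against the Gaussian weights $e^{-x^{2}/2}$ and $e^{-x^{2}/4}$ while $q_{N}$ remains bounded. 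After this step every term is a numerical multiple of one of $m^{*}_{2p}$, $m^{*}_{2p-2}$, $m^{*}_{2p-4}$, $m^{*}_{2p-6}$, $m^{*}_{2p-8}$; collecting coefficients, dividing out the overall factor of two that multiplies $m^{*}_{2p}$, and restoring the rescaling \eqref{Mscaling} between the $m^{*}$ and the $m$, gives \eqref{GOE_RR}. For $p=2$ and $p=3$ the prefactors $(2p-3)(2p-4)(2p-5)$ and $(2p-3)(2p-4)(2p-5)(2p-6)(2p-7)$ vanish, so the formally negative-index moments never actually enter, and only $m_{0}=N$ and $m_{2}=N(N+1)$ are needed as input.

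The main obstacle I expect is the purely computational bookkeeping of reassembling the several contributions produced by the repeated integrations by parts into the compact coefficients of \eqref{GOE_RR}. In particular the coefficient $(2p-3)(10p^{2}-9p-8N^{2}+8N)$ of $m_{2p-4}$ is the sum of three pieces --- one from $5x^{2}\rho_{(1)}'''$, one from $-6x\rho_{(1)}''$, and one from the $x$-independent part of the coefficient of $\rho_{(1)}'$ --- whose combination into this form must be verified carefully, and likewise the coefficient of $m_{2p-2}$ merges the contributions of $(8N-4)x^{2}\rho_{(1)}'$ and $-(4N-2)x\rho_{(1)}$. A related point demanding attention is the honest check that each of the many boundary terms created by the fivefold integration by parts really does vanish. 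An alternative route, lighter on bookkeeping, is to substitute the series $u(t,N)=\sum_{p\ge 0}\frac{t^{2p}}{(2p)!}m_{2p}$ into the fourth order equation \eqref{GOE_uODE} for the exponential generating function and equate coefficients of $t^{2p-3}$; this yields the same recurrence and could be presented instead, or used as a consistency check.
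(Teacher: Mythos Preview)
Your proposal is correct and follows essentially the same approach as the paper: multiply the homogeneous fifth order ODE for $\rho_{(1)}$ by $x^{2p-3}$, integrate over $\mathbb{R}$, repeatedly integrate by parts discarding boundary terms by Gaussian decay, and collect the resulting moment coefficients. The paper's proof is in fact terser than yours --- it simply displays the collected form and lists the boundary terms that must vanish --- but the method is identical, and your remark about the alternative route via \eqref{GOE_uODE} matches the original derivation of Ledoux cited in the statement.
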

\begin{proof}
As in the proof of Theorem \ref{GUErecurrence} we integrate $ x^{2p-3} $ against the homogeneous form of
\eqref{GOEresolventODE} for $ \rho_{(1)} $ on $ \mathbb{R} $, and after integrating by parts we find
\begin{multline}
 0 = \left(\frac{g}{N}\right)^p
     \Big\{
     (2p+2)m_{2p}-(4N-2)(4p-1)m_{2p-2}
\\
     -(2p-3)[-16N^2+16N+2+6(2p-2)+5(2p-1)(2p-2)]m_{2p-4}
\\
      +5(4N-2)(2p-3)(2p-4)(2p-5)m_{2p-6}
\\
      +4(2p-3)(2p-4)(2p-5)(2p-6)(2p-7)m_{2p-8} \Big\}
\\
      + \text{boundary terms containing $ x^{2p+1}\rho_{(1)}, \ldots ,x^{2p-3}\rho_{(1)}^{(IV)} $ as $ x \to \pm\infty $} .
\end{multline}
Clearly $ x^{2p+1}\rho_{(1)} $, $ x^{2p-2}\rho_{(1)}^{\prime} $, $ x^{2p-1}\rho_{(1)}^{\prime\prime} $, $ x^{2p-4}\rho_{(1)}^{\prime\prime\prime} $,
$ x^{2p-3}\rho_{(1)}^{(IV)} $ all vanish exponentially fast as $ x \to \pm\infty $ for all $ p\geq 2 $ and we are justified in
neglecting the boundary terms. Eq. \eqref{GOE_RR} then follows.
\end{proof}
This recurrence relation is an efficient way to generate low order moments, of which we list the first seven for
checking purposes
\begin{equation}
\begin{aligned}\label{GOE_moments}
m_{0} &= N ,
\\
m_{2} &= N^{2} + N ,
\\
m_{4} &= 2N^{3} + 5N^{2} + 5N ,
\\
m_{6} &= 5N^{4} + 22N^{3} + 52N^{2} + 41N ,
\\
m_{8} &= 14N^{5} + 93N^{4} + 374N^{3} + 690N^{2} + 509N ,
\\
m_{10} &= 42N^{6} + 386N^{5} + 2290N^{4} + 7150N^{3} + 12143N^{2} + 8229N ,
\\
m_{12} &= 132 N^{7} + 1586 N^{6} + 12798 N^{5} + 58760 N^{4} + 167148 N^{3} + 258479 N^{2} + 166377 N .
\end{aligned}
\end{equation}

Again the recurrence relation enables one to compute the large $ N $ corrections to the moments and Ledoux has
given the first correction beyond the leading order in \cite{Le_2009}. We require more terms beyond the first
correction, and these can be easily found using the recurrence relation.
\begin{theorem}\label{GOEfirst}
As $N \rightarrow \infty$ for fixed $ p $ then $ m_{2p} = O(N^{p+1}) $ and the sub-leading coefficients are given by
\begin{multline}
   \frac{m_{2p}(N,\frac{1}{2})}{N^{p+1}} = C_{p}
    + 2^{2p-1} \left[ 1-\frac{\Gamma(p+\frac{1}{2})}{\sqrt{\pi}\Gamma(p+1)} \right]N^{-1}
\\
    + \frac{1}{3}4^{p-1}p \left[ -3+\frac{(7p-1) \Gamma(p+\frac{1}{2})}{\sqrt{\pi}\Gamma(p+1)} \right]N^{-2}
\\
    + \frac{1}{3}4^{p-2}p(p-1) \left[ 8p-7-\frac{(14p-4)\Gamma(p+\frac{1}{2})}{\sqrt{\pi}\Gamma(p+1)} \right]N^{-3}
\\
    + \frac{1}{45}2^{2p-5}p(p-1)(p-2) \left[ -15(8p-9)+\frac{(185p^2-317p+6)\Gamma(p+\frac{1}{2})}{\sqrt{\pi}\Gamma(p+1)} \right]N^{-4}
\\
    + \frac{1}{45}4^{p-4}p(p-1)(p-2)(p-3) \left[  320p^2-1008p+487-\frac{4(185p^2-387p+28) \Gamma(p+\frac{1}{2})}{\sqrt{\pi}\Gamma(p+1)} \right]N^{-5}
\\
    + \frac{1}{2835}2^{2p-9} p(p-1)(p-2)(p-3)(p-4) \left[ -63(320p^2-1168p+675) \phantom{\frac{\Gamma(p+\frac{1}{2})}{\sqrt{\pi}\Gamma(p+1)}} \right.
\\ \left.
    +\frac{4(6209p^3-29106p^2+26605p-60) \Gamma(p+\frac{1}{2})}{\sqrt{\pi}\Gamma(p+1)} \right]N^{-6}
    + {\rm O}(N^{-7}) .
\label{GOE_expN}
\end{multline}
\end{theorem}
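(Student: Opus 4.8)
The plan is to extend the argument that proves Theorem~\ref{GUE_largeNmoment}, now working from the fourth-order difference equation \eqref{GOE_RR} of Theorem~\ref{GOErecurrence}. I would substitute
\[
  m_{2p}\bigl(N,\tfrac12\bigr) \;=\; C_{p}\,N^{p+1}\Bigl[\,1+\sum_{m\ge 1}Y^{(m)}_{p}\,N^{-m}\Bigr],\qquad C_{p}=\frac{(2p)!}{p!\,(p+1)!},
\]
into \eqref{GOE_RR} and divide through by $C_{p}N^{p+1}$; in contrast with the GUE case there is no substitution removing the odd powers of $N^{-1}$, so all of them are kept. Using the elementary Catalan ratios $C_{p-1}/C_{p}=(p+1)/2(2p-1)$, $C_{p-2}/C_{p}=p(p+1)/4(2p-1)(2p-3)$, and the analogous expressions for $C_{p-3}/C_{p}$ and $C_{p-4}/C_{p}$, the $O(N^{0})$ part of the identity collapses to $(2p-1)Y^{(0)}_{p}=(4p-1)Y^{(0)}_{p-1}-2p\,Y^{(0)}_{p-2}$, which is solved by $Y^{(0)}_{p}\equiv 1$, the second independent solution being $4^{p}/C_{p}$; this is just $m_{2p}=O(N^{p+1})$ with leading coefficient the $p$-th Catalan number.

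The structural point, parallel to the collapse $X_{p}=X_{p-1}+\cdots$ in the GUE proof, is that after passing to the differences $Z^{(m)}_{p}:=Y^{(m)}_{p}-Y^{(m)}_{p-1}$ the recurrence becomes first order at each order in $1/N$: its homogeneous part is $Z^{(m)}_{p}=\tfrac{2p}{2p-1}Z^{(m)}_{p-1}$, whose summation factor produces the Wallis-type product $4^{p}/\bigl[(p+1)C_{p}\bigr]$. Concretely, collecting the coefficient of $N^{-m}$ for $m=1,\dots,6$ yields at each stage a first-order inhomogeneous difference equation for $Z^{(m)}_{p}$ whose inhomogeneity is an explicit rational function of $p$ times the already determined $Y^{(1)}_{p},\dots,Y^{(m-1)}_{p}$ (and Catalan ratios); one solves it by the summation-factor method and recovers $Y^{(m)}_{p}$ by one further summation. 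The two integration constants at each stage are fixed by the polynomial structure of $m_{2p}(N,\kappa)$ (Thm~2.8 of \cite{DE_2006}): since $m_{2p}(N,\tfrac12)$ is a polynomial of degree $p+1$ in $N$ proportional to $N$, the coefficient of $N^{p+1-m}$ vanishes for $p\le m-1$, i.e. $Y^{(m)}_{p}=0$ for $0\le p\le m-1$; equivalently one simply reads the initial data off the explicit moments \eqref{GOE_moments}. As this provides more conditions than constants, the surplus is a built-in consistency check. Rewriting every $4^{p}/C_{p}$-type contribution by $\Gamma(p+\tfrac12)/\bigl(\sqrt{\pi}\,\Gamma(p+1)\bigr)={2p\choose p}4^{-p}$ then brings each coefficient into the displayed form \eqref{GOE_expN}, and the polynomial prefactors $p(p-1)\cdots$ are forced by the vanishing of $Y^{(m)}_{p}$ for $p<m$.

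The part that is routine in principle but genuinely laborious is the bookkeeping at the higher orders: the inhomogeneous term feeding the $N^{-5}$ and $N^{-6}$ equations is built from all of $Y^{(1)},\dots,Y^{(5)}$, each itself a sum of a constant-type and a $4^{p}/C_{p}$-type piece weighted by quadratic or cubic polynomials in $p$, so the summations required and the subsequent collapse into the compact $\Gamma$-function closed forms are long. No idea beyond that of Theorem~\ref{GUE_largeNmoment} is needed; cross-checking the output against the explicit moments \eqref{GOE_moments} (and the further moments computed in the paper) guards against slips, and I expect this volume of algebra, together with the correct pinning of the constants, to be the main obstacle.
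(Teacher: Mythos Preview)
Your proposal is correct and follows essentially the same route as the paper: substitute a $1/N$ ansatz into the Ledoux recurrence \eqref{GOE_RR} and solve order by order, noting that the odd orders now appear and that the resulting inhomogeneous difference equations are second order with $\Gamma$-function inhomogeneities. Your extra observation that differencing ($Z^{(m)}_p=Y^{(m)}_p-Y^{(m)}_{p-1}$) reduces these to first order, together with the summation-factor solution and the use of the polynomial structure of $m_{2p}$ to fix constants, is a natural way to carry out what the paper leaves implicit.
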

\begin{proof}
This is derived using the same methods as given in the proof of Theorem \ref{GUE_largeNmoment}. There are
three practical differences with the GUE case - the odd orders are present in addition the even ones, that the
inhomogeneous difference equations are now of second order and the inhomogeneous terms involve Gamma functions.
\end{proof}

\begin{remark}
This agrees with Theorem \ref{T19} in the case $ \kappa = 1/2 $.
\end{remark}

In Theorem 4.2 of \cite{GJ_1997} Goulden and Jackson derived an explicit formula for the GOE moments.
In addition another formula for these moments has been deduced by Mezzadri and Simm 2011 \cite{MS_2011}, see Equation (34).
Their notation is related to ours by $ m_{2p}(N,1/2) \equiv 2^{p}M_{G}^{(1)}(2p,N) $.
\begin{theorem}[\cite{GJ_1997}, \cite{MS_2011}]
For all $ p, N $ the GOE moments are
\begin{equation}
  m_{2p}(N,1/2) = m_{2p}(N-1,1) + p!\sum^{p}_{i=0}2^{2p-i}\sum^{p}_{j=0}{p-\tfrac{1}{2} \choose p-j}{i+j-1 \choose i}{\frac{N-1}{2} \choose j} .
\end{equation}
For $ N $ even the GOE moments were given by Mezzadri and Simm as
\begin{multline}
m_{2p}(N,1/2) = m_{2p}(N-1,1)
\\
    - 2^{p}\sum_{j=1}^{\min \left( \frac{N}{2} -1 , p \right) }  \sum_{i=0}^{\min \left(p, \frac{N}{2} -1 -j  \right) }{ p \choose i } { p \choose i+j}
  \frac{ \left( \frac{N}{2}-i-j \right)_{p+\frac{1}{2}} }{ \left( \frac{N}{2}-j \right)_{\frac{1}{2}}  }
+ \phi_{p}(N) ,
\label{GOE_explicit}
\end{multline}
where
\begin{align}
\phi_{p}(N) =
    \begin{cases} \displaystyle
          \begin{aligned} & \frac{(2p)! 2^{\frac{N}{2}} } {\Gamma(\frac{N}{2}) }
                        \sum_{j=0}^{p-\frac{N}{2}} \sum_{i=0}^{\frac{N}{2}-1}{N-1 \choose 2j}\frac{(-1)^{j}2^{-j-2i}}{(2j+2i+1)!(p-\frac{N}{2}-j)!} \\
                          & \qquad	+ \frac{(2p)!}{\Gamma(\frac{N}{2})}
                        \sum_{j=0}^{\frac{N}{2}-1} \sum_{i=0}^{j}\frac{(\frac{N}{2}-i-1)!}{(j-i)!(p-j)!2^{p-2j}} {N-1 \choose N-2i-1},
          \end{aligned}
          & N \le 2p ,
    \\[+40pt] \displaystyle
          (2p)! \sum_{j=0}^{p} \frac{\left( \frac{N}{2}+\frac{1}{2}-j \right)_{j}}{2^{-3j}(2j)!(p-j)!},
          & N > 2p .
    \end{cases}
\label{GOE_phi}
\end{align}
\end{theorem}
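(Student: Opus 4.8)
The plan is to obtain both displayed identities by substituting the explicit GOE density \eqref{GOE_density} into $m_{2p}(N,\tfrac12)=\int_{-\infty}^{\infty}x^{2p}\rho_{(1)}(x,N)\,dx$ and evaluating the resulting elementary Hermite integrals; the two forms are then two bookkeepings of the same finite sum (alternatively one may simply quote Theorem~4.2 of \cite{GJ_1997} and, for $N$ even, Eq.~(34) together with \eqref{GOE_phi} of \cite{MS_2011}, but a self-contained derivation is available). After undoing the bulk scaling (which for the PDF \eqref{GbetaEpdf} just amounts to $g=N$) the density splits into the reproducing-kernel term $K_N(x,x)$ of \eqref{OPSkernel}, the ${\rm sgn}$-convolution term, and the $\chi_{N\in 2\mathbb{N}+1}$ term. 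Christoffel--Darboux gives $K_N(x,x)=K_{N-1}(x,x)+e^{-x^2/2}p_{N-1}(x)^2$, and $\int x^{2p}K_{N-1}(x,x)\,dx=m_{2p}(N-1,1)$ exactly, because $K_{N-1}(x,x)$ is precisely the $\kappa=1$ GUE density on $N-1$ eigenvalues. This single identification produces the $m_{2p}(N-1,1)$ appearing on the right of both formulas; everything else -- the leftover $\int x^{2p}e^{-x^2/2}p_{N-1}^2$, the ${\rm sgn}$ term, and the odd-$N$ term -- is a combination of Gaussian moments of products of $H_{N-1}(x/\sqrt2)$ and $H_N(x/\sqrt2)$.

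I would evaluate those by the standard linearisation of products of Hermite polynomials (together with \eqref{OPS_3trr}), which already produces the binomials $\binom{(N-1)/2}{j}$ seen in the first formula. For the cross term, write $\int {\rm sgn}(x-t)(\cdots)\,dt=2\int_{-\infty}^{x}(\cdots)\,dt-\int_{-\infty}^{\infty}(\cdots)\,dt$. When $N$ is \emph{odd}, $e^{-t^2/4}H_N(t/\sqrt2)$ is an exact derivative of $e^{-t^2/4}$ times an explicit Hermite polynomial, so $\int_{-\infty}^{x}$ collapses to $e^{-x^2/4}\times(\text{a polynomial in the }H_{N-1-2j}(x/\sqrt2))$, and the double integral reduces to single Gaussian--Hermite moments of the kind already treated; the odd-$N$ term merges with it, giving the first formula. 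When $N$ is \emph{even} the primitive $\int_{-\infty}^{x}e^{-t^2/4}H_N(t/\sqrt2)\,dt$ carries, besides its $e^{-x^2/4}\times\text{polynomial}$ part, a term proportional to the incomplete Gaussian $\int_{-\infty}^{x}e^{-s^2/4}\,ds$ (forced because $\int_{-\infty}^{\infty}e^{-t^2/4}H_N(t/\sqrt2)\,dt\neq0$); integrating $x^{2p}H_{N-1}(x/\sqrt2)$ against that part, and against the polynomial part, yields the two sums in \eqref{GOE_explicit}, and the piece whose degree count runs up against $2p$ is responsible for the regime split $N\le 2p$ versus $N>2p$ in $\phi_p(N)$ of \eqref{GOE_phi}.

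The main obstacle is not any individual integral -- each is elementary -- but compressing the resulting multi-index Gaussian-moment sums into the exact closed forms stated: the half-integer binomial $\binom{p-1/2}{p-j}$ in the first identity is the signature of a terminating ${}_2F_1$, which is cleanest to expose by evaluating the generating function $\int_{-\infty}^{\infty}e^{-x^2/2+sx}H_{N-1}(x/\sqrt2)^2\,dx$ in closed form and reading off the $s^{2p}$-coefficient; likewise the truncation bounds $\min(\tfrac{N}{2}-1,p)$ and the double-sum shape of $\phi_p(N)$ require careful tracking. A convenient way to keep this organised is to work at the level of generating functions throughout: apply the Laplace/Fourier reduction \eqref{laplaceXfm}--\eqref{EXPGF} to \eqref{GOE_density} so that $u(t,N,\tfrac12)$ is exhibited as the $\kappa=1$ GUE generating function on $N-1$ points plus an explicitly integrable confluent-hypergeometric / error-function correction, and extract the $t^{2p}$-coefficient only at the end, the two displayed formulas being two expansions of that correction. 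Failing all this, a purely combinatorial proof (the route of \cite{GJ_1997}) reads $m_{2p}(N,\tfrac12)$ and $m_{2p}(N-1,1)$ as weighted counts of gluings of a $2p$-gon, the difference being the generating count of the gluings using at least one cross-cap, which evaluates directly to the stated binomial sum.
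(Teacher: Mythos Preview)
The paper gives no proof of this theorem at all: it is stated with the attribution ``[\cite{GJ_1997}, \cite{MS_2011}]'' and followed only by a remark checking agreement with \eqref{moment0}--\eqref{moment12} at $\kappa=1/2$. So there is nothing in the paper to compare your argument against; the result is simply quoted from the literature.

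Your outline is nonetheless the natural route and is essentially how Mezzadri--Simm proceed: start from the explicit GOE density \eqref{GOE_density}, peel off the $K_{N-1}$ piece via Christoffel--Darboux to produce $m_{2p}(N-1,1)$, and reduce the remainder to Gaussian--Hermite integrals. Your parity analysis of the antiderivative $\int_{-\infty}^{x}e^{-t^2/4}H_N(t/\sqrt2)\,dt$ (closed form for $N$ odd, residual error-function piece for $N$ even) correctly accounts for why the second formula is stated only for $N$ even and why $\phi_p(N)$ splits into two regimes. You are also right that the combinatorial argument of Goulden--Jackson is an independent route to the first identity. The honest caveat you raise --- that collapsing the multi-index sums into the \emph{exact} binomial/Pochhammer forms displayed is the real work --- is accurate; your proposal is a correct high-level plan rather than a finished proof, but since the paper itself offers no proof, that is not a deficiency relative to the paper.
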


\begin{remark}
For $ 0\leq p\leq 6 $ this agrees with Eqs. \eqref{moment0}-\eqref{moment12} in the case $ \kappa = 1/2 $.
\end{remark}

\subsection{$\kappa =2$ GSE Moments}
We recount the well-known explicit result for the density of eigenvalues in the symplectic Gaussian ensemble,
as say given in \S 4, p. 159  of \cite{AFNvM_2000}, but adapted to our slightly differing conventions.
From this work we deduce
\begin{equation}
   \rho_{(1)}(x,N) = \frac{1}{2}\sqrt{\frac{N}{g}} \left\{
    \sqrt{2}K_{2N}(\sqrt{2}x,\sqrt{2}x) + \frac{e^{-\frac{1}{2}x^2}H_{2N}(x)}{\sqrt{\pi}2^{2N}(2N-1)!}\int^{x}_{-\infty}dt\;e^{-\frac{1}{2}t^2}H_{2N-1}(t)
                                                   \right\}_{x\mapsto \sqrt{\frac{N}{g}}x} ,
\label{GSE_density}
\end{equation}
where $ K_{N}(\cdot,\cdot) $ is given by the kernel \eqref{OPSkernel}.

All the results we give in this subsection for the GSE case can be expressed by the {\it duality relations} with
the GOE. In \eqref{full_duality} the general expression for all $ \kappa $ was given and also details of the
implications for the generating functions in Corollary \ref{dual}, so we will refrain from repeating all of that here.
\begin{theorem}[\cite{MW_2003}, Theorem 6 of \cite{Le_2009}]
For all $ p \in \mathbb{Z} $ and $ N\ge 1 $ the moments satisfy the duality relation
\begin{equation}
  m_{2p}(N,2) = (-)^{p+1}2^{-p-1} m_{2p}(-2N,1/2) ,
\label{duality}
\end{equation}
as implied by \eqref{full_duality} with $\kappa = 2$, together with the corresponding formula of Corollary \ref{dual}.
\end{theorem}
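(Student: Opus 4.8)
The plan is to read off \eqref{duality} as the special case $\kappa = 2$ (and $l = p$) of the general moment duality \eqref{full_duality}, which has already been recorded as Thm 2.8 of \cite{DE_2006}. Substituting $\kappa = 2$ into
\[
  m_{2l}(N,\kappa) = (-1)^{l+1}\kappa^{-l-1}m_{2l}(-\kappa N, \kappa^{-1})
\]
gives $\kappa^{-l-1} = 2^{-p-1}$, $-\kappa N = -2N$, and $\kappa^{-1} = 1/2$, so that $m_{2p}(N,2) = (-1)^{p+1}2^{-p-1}m_{2p}(-2N,1/2)$, which is exactly \eqref{duality}. The remaining content of the statement --- the matching dualities for the resolvent and the exponential-type generating functions linking the symplectic and orthogonal cases --- is then simply the $\kappa = 2$ instance of Corollary \ref{dual}, itself obtained from \eqref{full_duality} by comparing coefficients in the large-$x$ expansion \eqref{resolventBC} after allowing for the scaling \eqref{Mscaling}. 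So no new argument is required: everything follows by specialisation once \eqref{full_duality} is in hand.

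As a check one may specialise the explicit polynomials \eqref{moment0}--\eqref{moment12} to $\kappa = 2$ and compare with the GOE polynomials \eqref{GOE_moments} under $N \mapsto -2N$ together with the prefactor $(-1)^{p+1}2^{-p-1}$; the palindromic/anti-palindromic structure of the $N$-coefficients (as polynomials in $\kappa^{-1}$) is precisely what makes the two sides agree term by term. If instead one wanted an argument independent of \eqref{full_duality}, the natural route would mirror Theorems \ref{GOEWode} and \ref{GOErecurrence}: starting from the GSE density \eqref{GSE_density}, set up the (five-dimensional) basis of bilinear products in the relevant Hermite polynomials, derive a fifth-order linear ODE for the GSE resolvent, extract from it a linear recurrence for $m_{2p}(N,2)$, and verify that $(-1)^{p+1}2^{-p-1}m_{2p}(-2N,1/2)$ satisfies that recurrence with the same initial data; uniqueness of the solution then yields \eqref{duality}.

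The only thing to watch is the bookkeeping of the sign $(-1)^{l+1}$, the power $\kappa^{-l-1}$, and the argument replacement $N \mapsto -\kappa N$, all of which must be carried through consistently; and one should note that, since $m_{2p}$ is a priori defined for $p \in \mathbb{Z}_{\ge 0}$, the relation is to be read for those $p$ with $N$ continued to negative (and rational) values of its argument. Given \eqref{full_duality} and Corollary \ref{dual}, there is essentially no obstacle here --- the substantive work lies in those prior results.
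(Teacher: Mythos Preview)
Your proposal is correct and matches the paper's own treatment: the theorem is stated precisely as the $\kappa=2$ specialisation of \eqref{full_duality} together with Corollary~\ref{dual}, with no further proof given beyond that. Your outline of the alternative first-principles route (via the fifth-order ODE and recurrence for the GSE) also anticipates exactly what the paper goes on to do as an independent check.
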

However, it is of independent interest to derive the results from first principles, and we will
proceed in this manner.

The symplectic analogue of \eqref{GUEresolventODE} and \eqref{GOEresolventODE} is a fifth order ordinary linear
differential equation for the density and an inhomogeneous version for the resolvent.
\begin{theorem}\label{GSEWode}
The resolvent $ W_{1}(x) $ satisfies the fifth order, linear inhomogeneous ordinary differential equation
\begin{multline}
  -\tfrac{1}{4}\frac{g^4}{N^4}W_{1}^{(V)}+5\left[ \tfrac{1}{4}x^2-\frac{g}{N}(N+\tfrac{1}{4}) \right]\frac{g^2}{N^2}W_{1}^{\prime\prime\prime}
    -\tfrac{3}{2}\frac{g^2}{N^2}xW_{1}^{\prime\prime}
\\
  +\left[ -x^4+(8N+2)\frac{g}{N}x^2+(-16N^2-8N+\tfrac{1}{2})\frac{g^2}{N^2} \right]W_{1}^{\prime}+x\left[ x^2-(4N+1)\frac{g}{N} \right]W_{1}
\\
  = 2 N (x^2-4 g)-5 g ,
\label{GSEresolventODE}
\end{multline}
subject to the boundary conditions \eqref{resolventBC}, for fixed $ g, N $, with \nonumber the moments given by \eqref{GSE_moments}.
\end{theorem}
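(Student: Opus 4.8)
The plan is to derive \eqref{GSEresolventODE} along exactly the lines used for Theorem~\ref{GOEWode}, the only structural differences being that the kernel entering the symplectic density \eqref{GSE_density} is $K_{2N}$ evaluated at the doubled argument $\sqrt2\,x$, that there is no parity-dependent additive constant (in contrast to the GOE case), and that the half-weight multiplying the correction term is $e^{-x^{2}/2}$ rather than $e^{-x^{2}/4}$. First I would pass to the unscaled system $g\mapsto N$, and it is convenient to work with the rescaled orthonormal polynomials $\tilde p_{n}(x):=p_{n}(\sqrt2\,x)$, which by \eqref{OPS_3trr}, \eqref{OPS_D} and \eqref{OPS_2ndODE} satisfy $\sqrt2\,x\,\tilde p_{n}=\sqrt{n+1}\,\tilde p_{n+1}+\sqrt n\,\tilde p_{n-1}$, $\tilde p_{n}'=\sqrt{2n}\,\tilde p_{n-1}$ and $\tilde p_{n}''-2x\tilde p_{n}'+2n\tilde p_{n}=0$, together with the primitive $\tilde q_{n}(x):=\int_{-\infty}^{x}e^{-t^{2}/2}\tilde p_{n}(t)\,dt$, for which $\tilde q_{n}'=e^{-x^{2}/2}\tilde p_{n}$ and $\tilde q_{2N-1}(\pm\infty)=0$ by parity. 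Since $H_{m}(x)\propto\tilde p_{m}(x)$, rewriting \eqref{GSE_density} displays $\rho_{(1)}/\sqrt N$ as a fixed combination, with constant and polynomial-in-$x$ coefficients, of the five bilinear objects
\[
  e^{-x^{2}}\tilde p_{2N}\tilde p_{2N-1},\ \ e^{-x^{2}}\tilde p_{2N}'\tilde p_{2N-1},\ \ e^{-x^{2}}\tilde p_{2N}\tilde p_{2N-1}',\ \ e^{-x^{2}/2}\tilde q_{2N-1}\tilde p_{2N},\ \ e^{-x^{2}/2}\tilde q_{2N-1}\tilde p_{2N}',
\]
the fourth polynomial product $\tilde p_{2N}'\tilde p_{2N-1}'$ having been reduced to the first three by the Christoffel--Darboux-type relation exactly as in Theorems~\ref{GUEWode} and \ref{GOEWode}.

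The next step is to check that this five-dimensional span is closed under $d/dx$: the derivative of either $\tilde q_{2N-1}$-term produces, via $\tilde q_{2N-1}'=e^{-x^{2}/2}\tilde p_{2N-1}$, a contribution proportional to $e^{-x^{2}}\tilde p_{2N-1}\tilde p_{2N}$ already present, while every occurrence of $\tilde p_{n}''$ is removed by the second-order equation $\tilde p_{n}''=2x\tilde p_{n}'-2n\tilde p_{n}$ and $\tilde p_{2N-1}'$ is returned to the span using the three-term recurrence. Collecting $(\rho_{(1)},\rho_{(1)}',\rho_{(1)}'',\rho_{(1)}''',\rho_{(1)}^{(IV)})$ as a $5\times5$ matrix acting on the column of these five objects, one checks that its determinant is a nonzero polynomial in $N$ of low degree (so the matrix is invertible for all but finitely many $N$), inverts it, and substitutes the resulting expressions into the separately computed $\rho_{(1)}^{(V)}$. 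Everything collapses to a single homogeneous fifth-order linear relation for $\rho_{(1)}$, which on restoring the bulk scaling $x\mapsto\sqrt{N/g}\,x$ is the homogeneous part of \eqref{GSEresolventODE}.

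To promote this to the inhomogeneous equation for $W_{1}$ I would repeat the second half of the proof of Theorem~\ref{GOEWode}: integrate the homogeneous density equation against $(z-x)^{-1}$ over $\mathbb{R}$, perform the polynomial subtractions that turn the $x$-dependent coefficients into $z$-dependent ones plus explicit polynomial remainders, and integrate those remainders by parts via $\partial_{x}(z-x)^{-1}=-\partial_{z}(z-x)^{-1}$. Because every surviving piece of $\rho_{(1)}$ and of its first four derivatives carries a Gaussian factor, all boundary terms vanish except a handful evaluating to a fixed linear combination of $m_{0}=N$ and $m_{2}$; inserting their known values from \eqref{GSE_moments} reproduces the right-hand side $2N(x^{2}-4g)-5g$, and the boundary conditions \eqref{resolventBC} follow from the large-$x$ expansion of $W_{1}$ together with \eqref{Mscaling}. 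I expect the only genuine labour, relative to the third-order GUE case, to be the bookkeeping of the $5\times5$ inversion and the fifth-derivative reduction — conceptually nothing new arises. As independent checks, the $1/N$ expansion \eqref{resolventN} with coefficients \eqref{resolvent_0}--\eqref{resolvent_6} specialised to $\kappa=2$ should satisfy \eqref{GSEresolventODE} through ${\rm O}(N^{-7})$ exactly as in the remark following Theorem~\ref{GOEWode}, and \eqref{GSEresolventODE} should be compatible with the GOE equation \eqref{GOEresolventODE} under the resolvent duality of Corollary~\ref{dual} and the moment duality \eqref{duality}.
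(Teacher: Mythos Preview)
Your proposal is correct and follows essentially the same route as the paper's own proof: a five-dimensional bilinear basis involving $H_{2N}$ (or equivalently your $\tilde p_{2N}$), its companion $H_{2N-1}$, their first derivatives, and the primitive $Q_{2N-1}$, closure under differentiation via the second-order Hermite equation, inversion of the resulting $5\times5$ system, and then the Stieltjes-transform step with polynomial subtractions to recover the inhomogeneous term from $m_{0}$ and $m_{2}$. The only cosmetic difference is that the paper works directly with the classical Hermite polynomials $H_{n}$ (explicitly to avoid stray factors of two), whereas you work with the rescaled orthonormal family $\tilde p_{n}(x)=p_{n}(\sqrt2\,x)$; the determinant of the $5\times5$ matrix comes out as $-36(2N+1)$, confirming invertibility for all $N\ge1$.
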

\begin{proof}
As in Theorems \ref{GUEWode} and \ref{GOEWode} we will derive the result for the unscaled independent
variable and make the scaling at the conclusion of the derivation.
We take as our starting point a simplified variant of \eqref{GSE_density}
\begin{equation}
   B_N \rho_{(1)}(x) = e^{-x^2}\left[ H_{2N}^{\prime}(x)H_{2N-1}(x)-H_{2N}(x)H_{2N-1}^{\prime}(x) \right] + e^{-\frac{1}{2}x^2}H_{2N}(x)Q_{2N-1}(x) ,
\end{equation}
where $ B_N = \pi^{1/2}2^{2N+1}(2N-1)! $ and the new variable is defined as
\begin{equation}
   Q_{n}(x) := \int^{x}_{-\infty} dt\; e^{-\frac{1}{2}t^2} H_{n}(t) .
\end{equation}
We will work with the Hermite polynomials instead of the $ p_n $ to avoid unnecessary factors of two appearing in the
workings, and the identities we require that correspond to the ones employed for the $ p_n $ are
\begin{gather*}
   H_{n+1} = 2xH_{n}-2nH_{n-1} ,
\\
   H_{n}^{\prime} = 2nH_{n-1} ,
\\
   H_{n}^{\prime\prime}-2xH_{n}^{\prime}+2nH_{n} = 0 ,
\\
   Q_{n}^{\prime} = e^{-\frac{1}{2}x^2}H_{n} ,
\\
   H_{n}^{\prime}H_{n-1}^{\prime} + 2nH_{n}H_{n-1} - 2xH_{n}^{\prime}H_{n-1} = 0 .
\end{gather*}
Again we successively differentiate the density and employ the above identities to reduce the expressions
to linear combinations of the independent bilinear products
\begin{equation}
   \{ H_{2N}H_{2N-1}, H_{2N}^{\prime}H_{2N-1}, H_{2N}H_{2N-1}^{\prime}, H_{2N}Q_{2N-1}, H_{2N}^{\prime}Q_{2N-1} \} .
\end{equation}
The result for the first four derivatives is
\begin{multline}
  B_{N} \begin{pmatrix}
          \rho_{(1)} \\ \rho_{(1)}^{\prime} \\ \rho_{(1)}^{\prime\prime} \\ \rho_{(1)}^{\prime\prime\prime} \\ \rho_{(1)}^{IV}
        \end{pmatrix} =
\\
     \begin{pmatrix}
      0 & 1 & -1 & 1 & 0 \\
      -1 & 0 & 0 & -x & 1 \\
      x & 0 & -1 & x^2-4N-1 & 0 \\
      -x^2+4N-2 & -x & x & -x^3+(4N+3)x & x^2-4N-1 \\
      x^3-(4N-7)x & -4 & -x^2+4N-1 & x^4-(8N+6)x^2+16N^2+8N+3 & 4x
     \end{pmatrix}
\\ \cdot
     \begin{pmatrix}
      e^{-x^2}H_{2N}H_{2N-1} \\ e^{-x^2}H_{2N}^{\prime}H_{2N-1} \\ e^{-x^2}H_{2N}H_{2N-1}^{\prime} \\ e^{-\frac{1}{2}x^2}H_{2N}Q_{2N-1} \\ e^{-\frac{1}{2}x^2}H_{2N}^{\prime}Q_{2N-1}
     \end{pmatrix} .
\end{multline}
In this case the determinant of the transformation is $ -36(2N+1) $.
The fifth order derivative is computed to be
\begin{multline}
  B_{N}\rho_{(1)}^{V}
  = \left[ -x^4+(8N-19)x^2-16N^2+52N+8 \right]e^{-x^2}H_{2N}H_{2N-1}
\\
   + \left[ -x^3+(4N+1)x \right]e^{-x^2}H_{2N}^{\prime}H_{2N-1} + \left[ x^3+(-4N+5)x \right]e^{-x^2}H_{2N}H_{2N-1}^{\prime}
\\
   + \left[ -x^5+(8N+10)x^3-(16N^2+40N+15)x \right]e^{-\frac{1}{2}x^2}H_{2N}Q_{2N-1}
\\
   + \left[ x^4-(8N+2)x^2+16N^2+8N+7 \right]e^{-\frac{1}{2}x^2}H_{2N}^{\prime}Q_{2N-1} .
\end{multline}
and after substituting for the basis products we find the homogeneous fifth order ordinary differential equation
\begin{multline}
   -\tfrac{1}{4}\rho_{(1)}^{(V)}+5\left[ \tfrac{1}{4}x^2-(N+\tfrac{1}{4}) \right]\rho_{(1)}^{\prime\prime\prime}-\tfrac{3}{2}x\rho_{(1)}^{\prime\prime}
\\
  +\left[ -x^4+(8N+2)x^2-16N^2-8N+\tfrac{1}{2} \right]\rho_{(1)}^{\prime}+x\left[ x^2-(4N+1) \right]\rho_{(1)} = 0 .
\end{multline}
Restoring the bulk scaling $ x\mapsto \sqrt{\frac{N}{g}}x $ we have the homogeneous form of \eqref{GSEresolventODE}.
To find the differential equation for $ W_1 $ we repeat the methods employed in the proof of Theorems
\ref{GUEWode} and \ref{GOEWode}. Integrating the homogeneous form of the differential
equation against $ (z-x)^{-1} $ on $ \mathbb{R} $, and performing the subtractions for the $x$-dependent
coefficients we arrive at
\begin{multline}
 0 =  -\tfrac{1}{4}\frac{g^4}{N^4}\int^{\infty}_{-\infty}dx\;\frac{\rho_{(1)}^{(V)}}{(z-x)}
       +5\left[ \tfrac{1}{4}z^2-(N+\tfrac{1}{4})\frac{g}{N} \right]\frac{g^2}{N^2}\int^{\infty}_{-\infty}dx\;\frac{\rho_{(1)}^{\prime\prime\prime}}{(z-x)}
\\
     -\tfrac{3}{2}\frac{g^2}{N^2}z\int^{\infty}_{-\infty}dx\;\frac{\rho_{(1)}^{\prime\prime}}{(z-x)}
      +\left[ -z^4+(8N+2)\frac{g}{N}z^2+(-16N^2-8N+\tfrac{1}{2})\frac{g^2}{N^2} \right]\int^{\infty}_{-\infty}dx\;\frac{\rho_{(1)}^{\prime}}{(z-x)}
\\
       +z\left[ z^2-(4N+1)\frac{g}{N} \right]\int^{\infty}_{-\infty}dx\;\frac{\rho_{(1)}}{(z-x)}
\\
      -\tfrac{5}{4}\frac{g^2}{N^2}\int^{\infty}_{-\infty}dx\;(x+z)\rho_{(1)}^{\prime\prime\prime}
       -\tfrac{3}{2}\frac{g^2}{N^2}\int^{\infty}_{-\infty}dx\;\rho_{(1)}^{\prime\prime}
        +\int^{\infty}_{-\infty}dx\;[z^3+z^2x+zx^2+x^3-(8N+2)\frac{g}{N}(x+z)]\rho_{(1)}^{\prime}
\\
         +\int^{\infty}_{-\infty}dx\;[-z^2-zx-x^2+(4N+1)\frac{g}{N}]\rho_{(1)} .
\end{multline}
Making similar observations on $ \rho_{(1)} $ concerning its decay as $ x\to \pm\infty $ as we did in the
proof of Theorems \ref{GUEWode}, \ref{GOEWode} we can use
$ \int dx\, (z-x)^{-1}\partial^n_x\rho_{(1)}(x) = \partial^n_z W_1(z) $ for $ 0 \leq n \leq 5 $. Only a few of the
remaining terms are non-zero and these contribute
$ [ -2z^2+(12N+3)\frac{\displaystyle g}{N}]m_{0}-4\frac{\displaystyle g}{N}m_{2} $.
Using $ m_{0}=N $ and $ m_{2}=N(N-\frac{1}{2}) $ we deduce the inhomogeneous term and arrive at \ref{GSEresolventODE}.
\end{proof}
\begin{remark}
As a check we can substitute the $ W_1 $ duality formula \eqref{Dual_W} into GSE ordinary differential equation
\eqref{GSEresolventODE} and easily recover its GOE equivalent \eqref{GOEresolventODE}.
Furthermore we can verify that the $ 1/N $ expansion of the resolvent \eqref{resolventN} along with
coefficients \eqref{resolvent_0}-\eqref{resolvent_6}, under the specialisation $ \kappa \to 2 $, identically satisfies
\eqref{GSEresolventODE} up to the error term of $ {\rm O}(N^{-7}) $.
\end{remark}

The result for the linear ordinary differential equation for the exponential generating function
in the GOE case Theorem \ref{GOEu} has a symplectic analogue. This was also given by Ledoux \cite{Le_2009} but
can be directly deduced from the previous result.
In the symplectic case Ledoux has defined the moments $ c^{N}_{p} = 2^{p} m_{2p}(N,2) $.
\begin{theorem}
The GSE exponential generating function $ u(t,N) $ satisfies the fourth order, linear differential equation
\begin{multline}
t u^{{\rm (IV)}} + 5 u^{\prime\prime\prime} - t( \tfrac{5}{4}t^2+8N+2 ) u^{\prime\prime} - ( 9t^2+20N+5 ) u^{\prime}
\\
+ t \left[ \tfrac{1}{4}t^4+5(N+\tfrac{1}{4})t^2+16N^2+8N-11 \right] u =0 ,
\label{GSE_uODE}
\end{multline}
subject to the boundary conditions
\begin{equation}
    u(t,N) \mathop{\sim}_{t \rightarrow 0 } N + \frac{1}{2!}N(N-\tfrac{1}{2}) t^{2} + \frac{1}{4!}N(2N^2-\tfrac{5}{2}N+\tfrac{5}{4}) t^{4}
                                              + \frac{1}{6!}N(5N^3-11N^2+13N-\tfrac{41}{8}) t^{6} \cdots .
\label{GSE_bcODE}
\end{equation}
\end{theorem}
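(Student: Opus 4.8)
The plan is to mirror the derivations of Theorems \ref{GUEu} and \ref{GOEu}, converting the fifth-order resolvent equation \eqref{GSEresolventODE} just established in Theorem \ref{GSEWode} into a differential equation for $u(t,N)$ by means of the Laplace-transform identity \eqref{laplaceXfm}. Thus I would start from the (vanishing) difference of the two sides of \eqref{GSEresolventODE}, substitute $W_{1}(x) = \sqrt{N/g}\int_{0}^{\infty} dt\, u(t)\, e^{-\sqrt{N/g}\,xt}$ so that each $W_{1}^{(k)}$ brings down powers of $t$ inside the integral, and use the identity $-\sqrt{N/g}\,x\, e^{-\sqrt{N/g}\,xt} = \partial_{t} e^{-\sqrt{N/g}\,xt}$ (iterated for the higher powers of $x$ occurring as prefactors in \eqref{GSEresolventODE}) to rewrite every polynomial-in-$x$ coefficient as a $t$-derivative acting on the exponential.

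Integrating by parts then transfers these $t$-derivatives onto the remaining factor, a polynomial in $t$ times $u(t)$, and the bulk of the integrand collapses to the fourth-order operator of \eqref{GSE_uODE} applied to $u$; the fractional coefficients $\tfrac14$, $\tfrac54$, $\tfrac32$ in \eqref{GSEresolventODE} are precisely what produce the coefficients $\tfrac14 t^{4}$, $\tfrac54 t^{2}$ and $5(N+\tfrac14)t^{2}$ in \eqref{GSE_uODE}. The integration by parts also generates boundary terms at $t = 0$ and $t = \infty$. As in the proofs of Theorems \ref{GUEu} and \ref{GOEu}, I would argue that the $t = \infty$ terms vanish in a suitable sector $|\arg t| < \pi$ --- this is the step that, because $u(t)$ in fact grows like $e^{t^{2}/2}$ (compare \eqref{GUE_uEXP}), forces the Laplace integral and all subsequent manipulations to be interpreted along rays with $\RE(t^{2}) < 0$, or equivalently as an identity between analytic continuations. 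Evaluating the $t = 0$ boundary terms with the initial data $u(0) = m_{0} = N$, $u'(0) = 0$ and $u''(0) = m_{2} = N(N - \tfrac12)$, read off from \eqref{EXPGF} and \eqref{GSE_moments}, I would check that they exactly cancel the inhomogeneous term $2N(x^{2} - 4g) - 5g$, leaving the homogeneous equation \eqref{GSE_uODE}. The boundary conditions \eqref{GSE_bcODE} are then immediate from substituting \eqref{GSE_moments} into the defining series \eqref{EXPGF}.

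A natural consistency check, and indeed an alternative route, is to feed the duality relation \eqref{Dual_u} at $\kappa = 2$, namely $u(t,N,2) = -\tfrac12\, u(it/\sqrt{2}, -2N, \tfrac12)$, into the GOE equation \eqref{GOE_uODE} and verify that \eqref{GSE_uODE} is reproduced. The part I expect to be the main obstacle is entirely computational bookkeeping: carrying the fractional coefficients of \eqref{GSEresolventODE} correctly through the repeated integration by parts and confirming the exact cancellation of the inhomogeneous term at the origin. A secondary, more conceptual point --- shared with the GUE and GOE cases --- is the justification for dropping the boundary contribution at $t = \infty$, which is what makes the argument valid only in the sectorial sense noted above.
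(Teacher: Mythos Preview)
Your proposal is correct and follows essentially the same route as the paper: the paper also converts the resolvent equation \eqref{GSEresolventODE} via the Laplace transform \eqref{laplaceXfm}, integrates by parts, assumes the upper terminal vanishes, and uses $u(0)=N$, $u''(0)=m_2$ to cancel the inhomogeneous term $2N(x^2-4g)-5g$; the duality check you propose is likewise noted in the paper as a remark following the proof.
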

\begin{proof}
The method is the same as in the case of the GUE and GOE cases so we confine ourselves to recording
the intermediate step
\begin{multline}
 0 = \frac{g}{N} \int^{\infty}_{0}dt\, \left\{ \partial^4_{t}(tu)+\partial^3_{t}u-\partial^2_{t}[\tfrac{5}{4}t^3u+(8N+2)tu]-\partial_{t}[\tfrac{3}{2}t^2u+(4N+1)u]
     \right.
\\   \left.
     +[\tfrac{1}{4}t^5+5(N+\tfrac{1}{4})t^3-(-16N^2-8N+\tfrac{1}{2})t]u \right\}
\\
     + \frac{g}{N} \left[ e^{-\sqrt{\frac{N}{g}}xt}\left( -\partial^3_{t}(tu)-\partial^2_{t}u+\partial_{t}((\tfrac{5}{4}t^2+8N+2)tu)+(\tfrac{3}{2}t^2+4N+1)u \right)
                   \right.
\\                 \left.
                      + \partial_{t}e^{-\sqrt{\frac{N}{g}}xt} \left( \partial^2_{t}(tu)+\partial_{t}u-(\tfrac{5}{4}t^2+8N+2)tu \right)
                      + \partial^2_{t}e^{-\sqrt{\frac{N}{g}}xt}\left( -\partial_{t}(tu)-u \right) + \partial^3_{t}e^{-\sqrt{\frac{N}{g}}xt}(tu)
                   \right]^{\infty}_{0}
\\   -2N(x^2-4g)+5 g.
\end{multline}
Again assuming the upper terminal contribution of the boundary term vanish we compute the lower terminal to be
$ \frac{\displaystyle g}{\displaystyle N}\left(4u^{\prime\prime}(0)-(12N+3)u(0)+2\frac{\displaystyle N}{\displaystyle g}x^2u(0) \right) $.
Using the data for $ m_0, m_2 $ this cancels the inhomogeneous terms from the original differential equation and
we have \eqref{GSE_uODE}.
\end{proof}
\begin{remark}
Employing the duality formula for $ u $, \eqref{Dual_u}, and the implied mapping of the independent variable
into the preceding symplectic formula \eqref{GSE_uODE} we recover \eqref{GOE_uODE}.
\end{remark}

Ledoux has shown that a linear recurrence relation for the moments follows from the above result. This also follows
directly from the ordinary differential equation for $ W_1 $ as given in Theorem \ref{GSEWode}.
\begin{theorem}[\cite{Le_2009}]
The GSE moments $ m_{2p}(N,2) $ satisfy the fourth order, linear difference equation
\begin{multline}
  (p+1)m_{2p} = \tfrac{1}{2}(4p-1)(4N+1)m_{2p-2}+\tfrac{1}{4}(2p-3)\left(10p^2-9p-32N^2-16N\right)m_{2p-4}
\\
              - \tfrac{5}{8}(2p-3)(2p-4)(2p-5)(4N+1)m_{2p-6} - \tfrac{1}{8}(2p-3)(2p-4)(2p-5)(2p-6)(2p-7)m_{2p-8} ,
\label{GSE_RR}
\end{multline}
subject to the initial values $ m_{0}=N $, $ m_{2}=N(N-\tfrac{1}{2}) $ for $ p\geq 2 $.
\end{theorem}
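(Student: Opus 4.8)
The plan is to follow verbatim the strategy used to prove Theorems~\ref{GUErecurrence} and~\ref{GOErecurrence}, starting from the homogeneous fifth order differential equation satisfied by the density that was established inside the proof of Theorem~\ref{GSEWode}. Concretely, I would take that equation in its unscaled ($g\mapsto N$) form, $-\tfrac14\rho_{(1)}^{(V)}+5[\tfrac14 x^2-(N+\tfrac14)]\rho_{(1)}^{\prime\prime\prime}-\tfrac32 x\rho_{(1)}^{\prime\prime}+[-x^4+(8N+2)x^2-16N^2-8N+\tfrac12]\rho_{(1)}^{\prime}+x[x^2-(4N+1)]\rho_{(1)}=0$, multiply it through by the monomial $x^{2p-3}$, and integrate over $\mathbb{R}$.

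The next step is to integrate by parts each of the five derivative terms, transferring every derivative off $\rho_{(1)}$. Because the polynomial prefactors have degree at most $4$ and combine with $x^{2p-3}$, the surviving bulk integrals are exactly the scaled moments $m^{*}_{2p},m^{*}_{2p-2},m^{*}_{2p-4},m^{*}_{2p-6},m^{*}_{2p-8}$: the top moment $m^{*}_{2p}$ comes from the $x^{4}\rho_{(1)}^{\prime}$ and $x^{3}\rho_{(1)}$ contributions, while $m^{*}_{2p-8}$ arises from the fivefold integration by parts of the $\rho_{(1)}^{(V)}$ term, producing the combinatorial factor $\tfrac14(2p-3)(2p-4)(2p-5)(2p-6)(2p-7)$. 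The boundary contributions involve $x^{2p+1}\rho_{(1)}$, $x^{2p-2}\rho_{(1)}^{\prime}$, $x^{2p-1}\rho_{(1)}^{\prime\prime}$, $x^{2p-4}\rho_{(1)}^{\prime\prime\prime}$ and $x^{2p-3}\rho_{(1)}^{(IV)}$ evaluated at $\pm\infty$; since the admissible density \eqref{GSE_density} is a polynomial times a Gaussian factor, all of these vanish for every $p\ge2$, exactly as in the GUE and GOE cases. Collecting the coefficients of the five moments, dividing out the common factor $(g/N)^{p}$ and restoring the bulk scaling via \eqref{Mscaling}, one is left with a relation whose coefficients I would simplify to the compact polynomials in $p$ and $N$ appearing in \eqref{GSE_RR}. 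The initial values $m_{0}=N$, $m_{2}=N(N-\tfrac12)$ follow from \eqref{GSE_density} (or from the already-tabulated low-order moments) and make the solution unique for $p\ge2$.

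As an independent check I would also run the duality route: by \eqref{duality}, $m_{2p}(N,2)=(-1)^{p+1}2^{-p-1}m_{2p}(-2N,\tfrac12)$, so substituting $N\mapsto-2N$ into the GOE recurrence \eqref{GOE_RR} and replacing each $m_{2k}(-2N,\tfrac12)$ by $(-1)^{k+1}2^{k+1}m_{2k}(N,2)$, the $p$-dependent prefactors telescope across the shifted moments (a uniform factor $(-1)^{p+1}2^{p+1}$ on one side and factors $(-1)^{k+1}2^{k+1}$ with $k=p,\dots,p-4$ on the other); after using $2N-1\mapsto-(4N+1)$ and $-8N^2+8N\mapsto-(32N^2+16N)$ this reproduces \eqref{GSE_RR} on the nose. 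The main obstacle in the direct approach is purely the bookkeeping: carrying out up to five successive integrations by parts on terms with degree-$\le4$ polynomial coefficients, tracking all the signs and falling-factorial factors $(2p-3)(2p-4)\cdots$, and then confirming that the assembled coefficients collapse to the stated form --- which is mechanical but error-prone, and precisely what the duality cross-check guards against.
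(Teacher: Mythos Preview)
Your proposal is correct and follows essentially the same route as the paper: integrate $x^{2p-3}$ against the homogeneous fifth order ODE for $\rho_{(1)}$ from Theorem~\ref{GSEWode}, integrate by parts up to five times, and discard the boundary terms by the Gaussian decay of the density. The duality cross-check you describe is also noted in the paper as a remark following the theorem.
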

\begin{proof}
As in the proof of Theorems \ref{GUErecurrence}, \ref{GOErecurrence} we integrate $ x^{2p-3} $ against the homogeneous form of
\eqref{GSEresolventODE} for $ \rho_{(1)} $, and after integrating by parts we find
\begin{multline}
 0 = \left(\frac{g}{N}\right)^p
     \Big\{
     (2p+2)m_{2p}-(4N+1)(4p-1)m_{2p-2}
\\
     +(2p-3)[16N^2+8N-\tfrac{1}{2}-\tfrac{3}{2}(2p-2)-\tfrac{5}{4}(2p-1)(2p-2)]m_{2p-4}
\\
      +\tfrac{5}{4}(4N+1)(2p-3)(2p-4)(2p-5)m_{2p-6}
\\
      +\tfrac{1}{4}(2p-3)(2p-4)(2p-5)(2p-6)(2p-7)m_{2p-8} \Big\}
\\
      + \text{boundary terms containing $ x^{2p+1}\rho_{(1)}, \ldots ,x^{2p-3}\rho_{(1)}^{(IV)} $ as $ x \to \pm\infty $} .
\end{multline}
Again $ x^{2p+1}\rho_{(1)} $, $ x^{2p-2}\rho_{(1)}^{\prime} $, $ x^{2p-1}\rho_{(1)}^{\prime\prime} $, $ x^{2p-4}\rho_{(1)}^{\prime\prime\prime} $,
$ x^{2p-3}\rho_{(1)}^{(IV)} $ all vanish exponentially fast as $ x \to \pm\infty $ for all $ p\geq 2 $ and we are justified in
neglecting the boundary terms. Eq. \eqref{GSE_RR} then follows.
\end{proof}

\begin{remark}
Employing the duality formula \eqref{duality} into the GSE recurrence \eqref{GSE_RR} we recover the GOE
analog \eqref{GOE_RR}.
\end{remark}

Initial data on the GSE moments are efficiently computed with this recurrence and we give the first seven cases
\begin{equation}
\begin{aligned}\label{GSE_moments}
m_{0} & = N ,
\\
2m_{2} & = 2N^{2} - N ,
\\
4m_{4} & = 8N^{3} - 10N^{2} + 5N ,
\\
8m_{6} & = 40N^{4} - 88N^{3} + 104N^{2} - 41N ,
\\
16m_{8} & = 224N^{5} - 744N^{4} +1496N^{3} - 1380N^{2} + 509N ,
\\
32m_{10} & = 1344N^{6} - 6176N^{5} + 18320N^{4} - 28600N^{3} + 24286N^{2} - 8229N ,
\\
64m_{12} & = 8448 N^{7} - 50752 N^{6} + 204768 N^{5} - 470080 N^{4} + 668592 N^{3} - 516958 N^{2} + 166377 N .
\end{aligned}
\end{equation}

In addition we can make statements about the leading terms of the general $ 2p$-th moment, in the sense of large
$ N $ using the recurrence relation.
\begin{theorem}\label{GSEfirst}
As $N \rightarrow \infty$ with $ p $ fixed then $ m_{2p} = O(N^{p+1}) $ and the leading coefficients are given by
\begin{multline}
  \frac{m_{2p}}{N^{p+1}} = C_{p} + 4^{p-1} \left[ -1+\frac{\Gamma(p+\frac{1}{2})}{\sqrt{\pi}\Gamma(p+1)} \right]N^{-1}
\\
    + \frac{1}{3} 4^{p-2}p \left[ -3+\frac{(7p-1)\Gamma(p+\frac{1}{2})}{\sqrt{\pi}\Gamma(p+1)} \right]N^{-2}
\\
    + \frac{1}{3}2^{2p-7}p(p-1) \left[ -8p+7+\frac{2(7p-2)\Gamma(p+\frac{1}{2})}{\sqrt{\pi}\Gamma(p+1)} \right]N^{-3}
\\
    + \frac{1}{45}2^{2p-9}p(p-1)(p-2)\left[ -15(8p-9) +\frac{(185p^2-317p+6)\Gamma(p+\frac{1}{2})}{\sqrt{\pi}\Gamma(p+1)} \right]N^{-4}
\\
    + \frac{1}{45}2^{2p-13}p(p-1)(p-2)(p-3) \left[ -320p^2+1008p-487+\frac{4(185p^2-387p+28)\Gamma(p+\frac{1}{2})}{\sqrt{\pi}\Gamma(p+1)} \right]N^{-5}
\\
    + \frac{1}{2835}2^{2p-15}p(p-1)(p-2)(p-3)(p-4) \left[ -63(320p^2-1168p+675) \phantom{\frac{\Gamma(p+\frac{1}{2})}{\sqrt{\pi}\Gamma(p+1)}} \right.
\\ \left.
    +\frac{4(6209p^3-29106p^2+26605p-60)\Gamma(p+\frac{1}{2})}{\sqrt{\pi}\Gamma(p+1)} \right]N^{-6}
    + {\rm O}(N^{-7}) .
\label{GSE_expN}
\end{multline}
\end{theorem}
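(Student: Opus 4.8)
The cleanest route to Theorem~\ref{GSEfirst} is to bypass the recurrence \eqref{GSE_RR} altogether and instead transport the GOE expansion of Theorem~\ref{GOEfirst} through the duality \eqref{duality}. Since $m_{2p}(N,\kappa)$ is a polynomial in $N$ (the structural result of \cite{DE_2006}), the relation $m_{2p}(N,2)=(-1)^{p+1}2^{-p-1}m_{2p}(-2N,1/2)$ is an identity of rational functions of $N$, and so may be applied term by term to the Laurent expansion in $1/N$.

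First I would record the GOE expansion in the form $m_{2p}(M,1/2)=M^{p+1}\sum_{k\ge 0}\gamma_k(p)\,M^{-k}$, where $\gamma_0(p)=C_p$ and $\gamma_1(p),\dots,\gamma_6(p)$ are the bracketed quantities displayed in \eqref{GOE_expN}, viewed as polynomials in $p$ (the sum being finite, as the moments are polynomial in $M$). Setting $M=-2N$ and inserting into \eqref{duality} gives
\[
  m_{2p}(N,2)=(-1)^{p+1}2^{-p-1}\sum_{k\ge 0}\gamma_k(p)\,(-2)^{p+1-k}N^{p+1-k}=N^{p+1}\sum_{k\ge 0}\gamma_k(p)\,(-2)^{-k}N^{-k},
\]
since the prefactor collapses: $(-1)^{p+1}2^{-p-1}(-2)^{p+1}=1$. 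Thus the coefficient of $N^{-k}$ in $m_{2p}(N,2)/N^{p+1}$ is exactly $(-2)^{-k}$ times the corresponding GOE coefficient. The one remaining task is the routine algebraic verification that $-\tfrac{1}{2}\gamma_1,\ \tfrac{1}{4}\gamma_2,\ -\tfrac{1}{8}\gamma_3,\dots$ reproduce the brackets in \eqref{GSE_expN}; for instance $-\tfrac{1}{2}\cdot 2^{2p-1}\bigl[1-\Gamma(p+\tfrac{1}{2})/(\sqrt{\pi}\,\Gamma(p+1))\bigr]=4^{p-1}\bigl[-1+\Gamma(p+\tfrac{1}{2})/(\sqrt{\pi}\,\Gamma(p+1))\bigr]$, matching the $N^{-1}$ term, with the alternating factors $(-2)^{-k}$ accounting for every sign change relative to \eqref{GOE_expN}, while the initial data are consistent because \eqref{GSE_moments} is obtained from \eqref{GOE_moments} under the same substitution $N\mapsto -2N$.

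On this route there is essentially no obstacle beyond keeping track of the powers of $-2$. Should one instead prefer a self-contained derivation in the style of Theorems~\ref{GUE_largeNmoment} and \ref{GOEfirst}, the plan would be to substitute $m_{2p}=C_pN^{p+1}[1+a_pN^{-1}+b_pN^{-2}+\cdots]$ into \eqref{GSE_RR} and peel off the orders $N^{p},N^{p-1},\dots,N^{p-5}$ in turn, at each stage obtaining a second-order inhomogeneous difference equation in $p$ whose inhomogeneity is built from $\Gamma(p+\tfrac{1}{2})/\Gamma(p+1)$, to be solved subject to the low-order data in \eqref{GSE_moments}. The only genuine difficulty there is the mounting algebraic bulk of those inhomogeneous terms at the fifth and sixth corrections --- purely mechanical, and in any event redundant given the duality shortcut, which simultaneously furnishes an internal consistency check on Theorems~\ref{GOEfirst} and \ref{GSEfirst}.
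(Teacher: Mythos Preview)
Your proof is correct but takes a different route from the paper. The paper's proof simply notes that the result can be shown ``using the methods of Theorem~\ref{GUE_largeNmoment}, with the additional features noted in proof of Theorem~\ref{GOEfirst}'' --- that is, by substituting a formal $1/N$ expansion into the GSE recurrence \eqref{GSE_RR} and solving the resulting second-order inhomogeneous difference equations order by order, precisely the self-contained method you sketch in your final paragraph. Your primary approach instead exploits the duality \eqref{duality} to pull the result back from the already-established GOE expansion, and the whole computation collapses to the observation that the coefficient of $N^{-k}$ scales by $(-2)^{-k}$ under $M\mapsto -2N$. This is cleaner and shorter; the paper deliberately opts for the direct derivation because, as stated just after \eqref{duality}, ``it is of independent interest to derive the results from first principles'', relegating duality to the role of a consistency check. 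Your route inverts the logic, taking duality as the proof and leaving the recurrence as the check --- entirely legitimate, and arguably the more efficient presentation.
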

\begin{proof}
This result can be shown using the methods of Theorem \ref{GUE_largeNmoment}, with the additional
features noted in proof of Theorem \ref{GOEfirst}.
\end{proof}

\begin{remark}
This agrees with Theorem \ref{T19} in the case $ \kappa = 2 $.
\end{remark}

Again one has an explicit formula for the moments, given by Mezzadri and Simm 2011 \cite{MS_2011}, Equation (33)
where we note $ m_{2p}(N,2) \equiv 2^{p}M_{G}^{(4)}(2p,N) $.
\begin{theorem}[\cite{MS_2011}]
For all $ N>0 $ the GSE moments are given by
\begin{multline}
m_{2p}(N,2) = 2^{-p-1} m_{2p}(2N,1) \\
                  - \frac{\Gamma(N+1)\Gamma(N)}{\pi^{1/2} 4^{1-N} \Gamma(2N)}
                    \sum_{j=1}^{\min(N,p)} \sum_{i=0}^{\min(N-j,p-j)}
                    {p \choose i} {p \choose i+j} (N-i-j+1)_{p-\frac{1}{2}} .
\label{GSE_finitesum}
\end{multline}
\end{theorem}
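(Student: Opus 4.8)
The plan is to read $m_{2p}(N,2)$ off directly from the closed form \eqref{GSE_density} for the density, working with the unscaled ensemble as elsewhere in this section. The bulk rescaling $x\mapsto\sqrt{N/g}\,x$ and the accompanying prefactor $\sqrt{N/g}$ in \eqref{GSE_density} are exactly undone when one forms $\int x^{2p}\rho_{(1)}(x)\,dx$ and multiplies by $(N/g)^{p}$ as in \eqref{Mscaling}, so it suffices to integrate $x^{2p}$ against $\tfrac12\big[\sqrt2\,K_{2N}(\sqrt2 x,\sqrt2 x)+(\sqrt\pi\,2^{2N}(2N-1)!)^{-1}e^{-x^{2}/2}H_{2N}(x)\,Q_{2N-1}(x)\big]$, where $Q_{n}(x):=\int_{-\infty}^{x}e^{-t^{2}/2}H_{n}(t)\,dt$. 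This writes the moment as a ``kernel piece'' plus a ``correction piece'', to be handled separately.

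For the kernel piece the substitution $u=\sqrt2\,x$ gives $\tfrac12\sqrt2\int x^{2p}K_{2N}(\sqrt2 x,\sqrt2 x)\,dx=2^{-p-1}\int u^{2p}K_{2N}(u,u)\,du$, and by \eqref{GUE_densitykernel} the remaining integral is the $2p$-th moment of the size-$2N$ GUE, namely $m_{2p}(2N,1)$; this produces the leading term $2^{-p-1}m_{2p}(2N,1)$ of \eqref{GSE_finitesum} and is why the GUE evaluations \eqref{GUEevaluation_mehta}, \eqref{GUE_finitesum} reappear on the right-hand side.

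For the correction piece I would evaluate $I:=\int_{-\infty}^{\infty}x^{2p}e^{-x^{2}/2}H_{2N}(x)\,Q_{2N-1}(x)\,dx$ by successive integration by parts. First expand the even polynomial $x^{2p}H_{2N}(x)$ as a finite combination of Hermite polynomials using the classical connection coefficients, then integrate by parts with $Q_{2N-1}'(x)=e^{-x^{2}/2}H_{2N-1}(x)$. The antiderivative of $e^{-x^{2}/2}H_{m}(x)$ obeys $Q_{m+1}=-2e^{-x^{2}/2}H_{m}+2m\,Q_{m-1}$ (immediate from $H_{m+1}=2xH_{m}-2mH_{m-1}$ and $H_{m}'=2mH_{m-1}$, as used in the proof of Theorem \ref{GSEWode}), so for even $m$ it is elementary (it collapses to $Q_{1}=-2e^{-x^{2}/2}$) while for odd $m$ it is an elementary multiple of Hermite functions plus a multiple of the transcendental $Q_{0}(x)=\int_{-\infty}^{x}e^{-t^{2}/2}\,dt$. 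All boundary terms vanish, since $Q_{2N-1}(\pm\infty)=0$ ($H_{2N-1}$ is odd), so a second integration by parts removes the surviving $Q_{0}$ and leaves only Gaussian integrals $\int e^{-x^{2}}H_{a}(x)H_{b}(x)\,dx=\sqrt\pi\,2^{b}b!\,\delta_{ab}$ together with the Gaussian moments $\int x^{2m}e^{-x^{2}/2}\,dx$. Collecting the surviving terms and re-indexing the binomial and factorial coefficients produces the double sum in \eqref{GSE_finitesum}; its prefactor $\Gamma(N+1)\Gamma(N)/(\pi^{1/2}4^{1-N}\Gamma(2N))$ and the half-integer Pochhammer symbol $(N-i-j+1)_{p-1/2}$ emerge after applying the Legendre duplication formula to the normalisation constants $2^{2N}(2N-1)!$ and $\sqrt\pi\,2^{b}b!$, and the ranges $1\le j\le\min(N,p)$, $0\le i\le\min(N-j,p-j)$ are forced by the Hermite degrees that can contribute, so every integral here is genuinely convergent.

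The hard part is this last step: the two integrations by parts generate a multiple sum over connection coefficients and Hermite indices, and collapsing it to the compact two-index expression \eqref{GSE_finitesum} — above all, tracking the half-integer shift in $(N-i-j+1)_{p-1/2}$ and pinning down the exact combinatorial prefactor — is where the real labour lies. Two cross-checks are available and worth carrying out: comparing \eqref{GSE_finitesum} with the explicit data \eqref{GSE_moments} for $0\le p\le 6$, and rederiving it from the GSE--GOE duality \eqref{duality} together with the GOE evaluation \eqref{GOE_explicit}--\eqref{GOE_phi}, i.e.\ analytically continuing $N\mapsto-2N$ in the GOE formula; this second route is lighter computationally but requires showing that the GOE sums truncate under the continuation and that the $\phi_{p}$ term of \eqref{GOE_phi} combines with the $m_{2p}(-2N-1,1)$ contribution to reproduce $2^{-p-1}m_{2p}(2N,1)$.
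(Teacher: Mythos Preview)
The paper does not supply a proof of this theorem at all: it is stated with the attribution \cite{MS_2011} and is followed immediately by the remark checking agreement with \eqref{moment0}--\eqref{moment12} at $\kappa=2$. So there is no ``paper's own proof'' to compare against; the result is imported from Mezzadri and Simm.

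That said, your strategy is sound as far as it goes. The kernel piece is handled correctly: the substitution $u=\sqrt2\,x$ in $\tfrac12\sqrt2\int x^{2p}K_{2N}(\sqrt2 x,\sqrt2 x)\,dx$ does give $2^{-p-1}m_{2p}(2N,1)$, and your recurrence $Q_{m+1}=-2e^{-x^{2}/2}H_{m}+2mQ_{m-1}$ and the vanishing $Q_{2N-1}(\pm\infty)=0$ are both right. The gap is exactly where you flag it: you outline two integrations by parts and a Hermite expansion but do not actually perform the collapse of the resulting multiple sum to the double sum in \eqref{GSE_finitesum} with the half-integer Pochhammer $(N-i-j+1)_{p-1/2}$ and the stated prefactor. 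That reduction is the entire content of the theorem, and a proof proposal that stops short of it is not yet a proof. Your alternative route via the duality \eqref{duality} and the GOE formula \eqref{GOE_explicit}--\eqref{GOE_phi} has the same status: plausible, but the claimed regrouping of $\phi_{p}(-2N)$ with $m_{2p}(-2N-1,1)$ into $2^{-p-1}m_{2p}(2N,1)$ is asserted, not shown, and the analytic continuation of the piecewise definition \eqref{GOE_phi} in $N$ needs care.
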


\begin{remark}
For $ 0\leq p\leq 6 $ this agrees with Eqs. \eqref{moment0}-\eqref{moment12} in the case $ \kappa = 2 $.
\end{remark}

\section{Acknowledgements}
This work was supported by the Australian Research Council through the DP `Characteristic polynomials in random matrix theory'.

\section{Appendix. Additional Data}
\setcounter{equation}{0}

In this Appendix we record the values of $ \tilde{\rho}_{(1),4}(x) $, $ \tilde{\rho}_{(1),5}(x) $ and $ \tilde{\rho}_{(1),6}(x) $
computed as for $ \tilde{\rho}_{(1),0}(x) $, $ \dots $, $ \tilde{\rho}_{(1),3}(x) $ in (\ref{density0})--(\ref{density3}). We find
{\small \begin{multline}
  \tilde{\rho}_{(1),4}(x) =
   h^4 \left\{ -\frac{1}{\pi}(37x^4+123gx^2+21g^2)(4g-x^2)^{-11/2}\;\chi_{x \in (-2\sqrt{g},2\sqrt{g})}
       \right.
       \\ \left.
               -\frac{1}{2048g^{5/2}}\epsilon^{(1)}_{(-2\sqrt{g},2\sqrt{g})}
                -\frac{1}{1024g^2}\epsilon^{(2)}_{(-2\sqrt{g},2\sqrt{g})}
               -\frac{1}{96g^{3/2}}\epsilon^{(3)}_{(-2\sqrt{g},2\sqrt{g})}
                -\frac{15}{768g}\epsilon^{(4)}_{(-2\sqrt{g},2\sqrt{g})}
          \right\}
   \\
     +
   h^2 \left\{ -\frac{1}{2\pi}(23x^4+454gx^2+176g^2)(4g-x^2)^{-11/2}\;\chi_{x \in (-2\sqrt{g},2\sqrt{g})}
       \right.
       \\ \left.
               -\frac{39}{4096g^{5/2}}\epsilon^{(1)}_{(-2\sqrt{g},2\sqrt{g})}
                -\frac{39}{2048g^2}\epsilon^{(2)}_{(-2\sqrt{g},2\sqrt{g})}
               -\frac{7}{384g^{3/2}}\epsilon^{(3)}_{(-2\sqrt{g},2\sqrt{g})}
                -\frac{17}{1536g}\epsilon^{(4)}_{(-2\sqrt{g},2\sqrt{g})}
          \right\}
   \\
     -\frac{1}{\pi}21g(x^2+g)(4g-x^2)^{-11/2}\;\chi_{x \in (-2\sqrt{g},2\sqrt{g})},
\label{density4}
\end{multline}{
{\small \begin{multline}
  \tilde{\rho}_{(1),5}(x) =
   h^5 \left\{  \frac{1}{\pi}(353x^4+1527gx^2+399g^2)(4g-x^2)^{-13/2}\;\chi_{x \in (-2\sqrt{g},2\sqrt{g})}
       \right.
       \\
               +\frac{425}{524288 g^{7/2}}\epsilon^{(1)}_{(-2\sqrt{g},2\sqrt{g})}
               +\frac{425}{262144 g^3}\epsilon^{(2)}_{(-2\sqrt{g},2\sqrt{g})}
               +\frac{159}{49152 g^{5/2}}\epsilon^{(3)}_{(-2\sqrt{g},2\sqrt{g})}
       \\ \left.
               +\frac{847}{196608 g^2}\epsilon^{(4)}_{(-2\sqrt{g},2\sqrt{g})}
               +\frac{705}{491520 g^{3/2}}\epsilon^{(5)}_{(-2\sqrt{g},2\sqrt{g})}
               -\frac{1695}{737280 g}\epsilon^{(6)}_{(-2\sqrt{g},2\sqrt{g})}
          \right\}
   \\
     +
   h^3 \left\{  \frac{1}{2\pi}(445x^4+4332gx^2+1512g^2)(4g-x^2)^{-13/2}\;\chi_{x \in (-2\sqrt{g},2\sqrt{g})}
       \right.
       \\
               +\frac{3019}{1048576 g^{7/2}}\epsilon^{(1)}_{(-2\sqrt{g},2\sqrt{g})}
               +\frac{3019}{524288 g^3}\epsilon^{(2)}_{(-2\sqrt{g},2\sqrt{g})}
               +\frac{157}{98304 g^{5/2}}\epsilon^{(3)}_{(-2\sqrt{g},2\sqrt{g})}
       \\ \left.
               -\frac{1763}{393216 g^2}\epsilon^{(4)}_{(-2\sqrt{g},2\sqrt{g})}
               -\frac{5837}{983040 g^{3/2}}\epsilon^{(5)}_{(-2\sqrt{g},2\sqrt{g})}
               -\frac{5677}{1474560 g}\epsilon^{(6)}_{(-2\sqrt{g},2\sqrt{g})}
          \right\}
   \\
     +
   h   \left\{  \frac{1}{2\pi}(21x^4+420gx^2+294g^2)(4g-x^2)^{-13/2}\;\chi_{x \in (-2\sqrt{g},2\sqrt{g})}
       \right.
       \\
               -\frac{1533}{524288 g^{7/2}}\epsilon^{(1)}_{(-2\sqrt{g},2\sqrt{g})}
               -\frac{1533}{262144 g^3}\epsilon^{(2)}_{(-2\sqrt{g},2\sqrt{g})}
               -\frac{327}{49152 g^{5/2}}\epsilon^{(3)}_{(-2\sqrt{g},2\sqrt{g})}
       \\ \left.
               -\frac{1083}{196608 g^2}\epsilon^{(4)}_{(-2\sqrt{g},2\sqrt{g})}
               -\frac{1533}{491520 g^{3/2} }\epsilon^{(5)}_{(-2\sqrt{g},2\sqrt{g})}
               -\frac{717}{737280 g}\epsilon^{(6)}_{(-2\sqrt{g},2\sqrt{g})}
          \right\} ,
\label{density5}
\end{multline}}
\begin{multline}
  \tilde{\rho}_{(1),6}(x) =
   h^6 \left\{  \frac{1}{\pi}(4081x^6+28625g x^4+26832g^2 x^2+1738g^3)(4g-x^2)^{-17/2}\;\chi_{x \in (-2\sqrt{g},2\sqrt{g})}
       \right.
       \\
               +\frac{161}{2097152 g^{9/2}}\epsilon^{(1)}_{(-2\sqrt{g},2\sqrt{g})}
               +\frac{161}{1048576 g^4}\epsilon^{(2)}_{(-2\sqrt{g},2\sqrt{g})}
               +\frac{1197}{1572864 g^{7/2}}\epsilon^{(3)}_{(-2\sqrt{g},2\sqrt{g})}
       \\ \left.
               +\frac{259}{196608 g^3}\epsilon^{(4)}_{(-2\sqrt{g},2\sqrt{g})}
               +\frac{1849}{983040 g^{5/2}}\epsilon^{(5)}_{(-2\sqrt{g},2\sqrt{g})}
               +\frac{6075}{2949120g^2}\epsilon^{(6)}_{(-2\sqrt{g},2\sqrt{g})}
               +\frac{11865}{10321920 g^{3/2}}\epsilon^{(7)}_{(-2\sqrt{g},2\sqrt{g})}
          \right\}
   \\
     +
   h^4 \left\{  \frac{1}{2\pi}\left(8567x^6+147556g x^4+243180g^2 x^2+31236g^3\right)(4g-x^2)^{-17/2}\;\chi_{x \in (-2\sqrt{g},2\sqrt{g})}
       \right.
       \\
               +\frac{7987}{4194304 g^{9/2}}\epsilon^{(1)}_{(-2\sqrt{g},2\sqrt{g})}
               +\frac{7987}{2097152 g^4}\epsilon^{(2)}_{(-2\sqrt{g},2\sqrt{g})}
               +\frac{27543}{3145728 g^{7/2}}\epsilon^{(3)}_{(-2\sqrt{g},2\sqrt{g})}
       \\ \left.
               +\frac{4889}{393216 g^3}\epsilon^{(4)}_{(-2\sqrt{g},2\sqrt{g})}
               +\frac{20683}{1966080 g^{5/2}}\epsilon^{(5)}_{(-2\sqrt{g},2\sqrt{g})}
               +\frac{34305}{5898240 g^2}\epsilon^{(6)}_{(-2\sqrt{g},2\sqrt{g})}
               +\frac{39739}{20643840 g^{3/2}}\epsilon^{(7)}_{(-2\sqrt{g},2\sqrt{g})}
          \right\}
   \\
     +
   h^2 \left\{  \frac{1}{\pi}\left(618x^6+32043g x^4+91299g^2 x^2+16834g^3\right)(4g-x^2)^{-17/2}\;\chi_{x \in (-2\sqrt{g},2\sqrt{g})}
       \right.
       \\
               +\frac{10731}{2097152 g^{9/2}}\epsilon^{(1)}_{(-2\sqrt{g},2\sqrt{g})}
               +\frac{10731}{1048576 g^4}\epsilon^{(2)}_{(-2\sqrt{g},2\sqrt{g})}
               +\frac{17679}{1572864 g^{7/2}}\epsilon^{(3)}_{(-2\sqrt{g},2\sqrt{g})}
       \\ \left.
               +\frac{1737}{196608 g^3}\epsilon^{(4)}_{(-2\sqrt{g},2\sqrt{g})}
               +\frac{5007}{983040 g^{5/2}}\epsilon^{(5)}_{(-2\sqrt{g},2\sqrt{g})}
               +\frac{6033}{2949120 g^2}\epsilon^{(6)}_{(-2\sqrt{g},2\sqrt{g})}
               +\frac{5019}{10321920 g^{3/2}}\epsilon^{(7)}_{(-2\sqrt{g},2\sqrt{g})}
          \right\}
   \\
     +\frac{1}{\pi}(1485g x^4+6138 g^2 x^2+1738 g^3)(4g-x^2)^{-17/2}\;\chi_{x \in (-2\sqrt{g},2\sqrt{g})} .
\label{density6}
\end{multline}
Furthermore we record four higher moments calculated using the MOPS software package \cite{DES_2007} instead
of the methods employed to compute \eqref{moment0}-\eqref{moment12}
\begin{align}
m_{14} = & 
429 N^8-6476 N^7 (-\kappa^{-1}+1)+28 N^6 \left(1550\kappa^{-2}-2671\kappa^{-1}+1550\right)
\label{moment14} \\
         &  -14 N^5 \left(-11865\kappa^{-3}+26521\kappa^{-2}-26521\kappa^{-1}+11865\right)
\notag\\
         &  +7 N^4 \left(55448\kappa^{-4}-143753\kappa^{-3}+186048\kappa^{-2}-143753\kappa^{-1}+55448\right)
\notag\\
         &  -14 N^3 \left(-39034\kappa^{-5}+110855\kappa^{-4}-165733\kappa^{-3}+165733\kappa^{-2}-110855\kappa^{-1}+39034\right)
\notag\\
         &  +N^2 \left(422232\kappa^{-6}-1270913\kappa^{-5}+2070257\kappa^{-4}-2386524\kappa^{-3}+2070257\kappa^{-2}-1270913\kappa^{-1}+422232\right)
\notag\\
         &  +N \left(135135\kappa^{-7}-422232\kappa^{-6}+724437\kappa^{-5}-906423\kappa^{-4}+906423\kappa^{-3}-724437\kappa^{-2}+422232\kappa^{-1}-135135\right) ,
\notag
\end{align}
\begin{align}
m_{16} = & 
1430 N^9-26333 N^8 (-\kappa^{-1}+1)+4 N^7 \left(55177\kappa^{-2}-95339\kappa^{-1}+55177\right)
\label{moment16} \\
         &  -14 N^6 \left(-78040\kappa^{-3}+175407\kappa^{-2}-175407\kappa^{-1}+78040\right)
\notag\\
         &  +N^5 \left(3463634\kappa^{-4}-9056368\kappa^{-3}+11756038\kappa^{-2}-9056368\kappa^{-1}+3463634\right)
\notag\\
         &  +N^4 \left(7123780\kappa^{-5}-20466843\kappa^{-4}+30790276\kappa^{-3}-30790276\kappa^{-2}+20466843\kappa^{-1}-7123780\right)
\notag\\
         &  +N^3 \left(9163236\kappa^{-6}-27995000\kappa^{-5}+46050702\kappa^{-4}-53268136\kappa^{-3} \right. \notag\\
         & \phantom{+12 N^3 (}\left. +46050702\kappa^{-2}-27995000\kappa^{-1}+9163236\right)
\notag\\
         &  +N^2 \left(6633360\kappa^{-7}-21117210\kappa^{-6}+36735448\kappa^{-5}-46305896\kappa^{-4} \right. \notag\\
         & \phantom{+12 N^3 (}\left. +46305896\kappa^{-3}-36735448\kappa^{-2}+21117210\kappa^{-1}-6633360\right)
\notag\\
         &  +3 N \left(675675\kappa^{-8}-2211120\kappa^{-7}+3984658\kappa^{-6}-5288076\kappa^{-5}+5752801\kappa^{-4} \right. \notag\\
         & \phantom{+12 N^3 (}\left. -5288076\kappa^{-3}+3984658\kappa^{-2}-2211120\kappa^{-1}+675675\right) ,
\notag
\end{align}
\begin{align}
m_{18} = &
4862 N^{10}-106762 N^9 (-\kappa^{-1}+1)+6 N^8 \left(181261\kappa^{-2}-313902\kappa^{-1}+181261\right)
\label{moment18} \\
         &  -60 N^7 \left(-111789\kappa^{-3}+252415\kappa^{-2}-252415\kappa^{-1}+111789\right)
\notag\\
         &  +N^6 \left(27391174\kappa^{-4}-72116946\kappa^{-3}+93841930\kappa^{-2}-72116946\kappa^{-1}+27391174\right)
\notag\\
         &  -6 N^5 \left(-12684669\kappa^{-5}+36783020\kappa^{-4}-55611546\kappa^{-3}+55611546\kappa^{-2}-36783020\kappa^{-1}+12684669\right)
\notag\\
         &  +N^4 \left(142341934\kappa^{-6}-439988319\kappa^{-5}+729284620\kappa^{-4} \right. \notag\\
         & \phantom{+12 N^3 (}\left. -845821890\kappa^{-3}+729284620\kappa^{-2}-439988319\kappa^{-1}+142341934\right)
\notag\\
         &  -10 N^3 \left(-17063718\kappa^{-7}+55103324\kappa^{-6}-96859509\kappa^{-5}+122769969\kappa^{-4} \right. \notag\\
         & \phantom{+12 N^3 (}\left. -122769969\kappa^{-3}+96859509\kappa^{-2}-55103324\kappa^{-1}+17063718\right)
\notag\\
         &  +N^2 \left(117193185\kappa^{-8}-390187530\kappa^{-7}+712745500\kappa^{-6}-954191664\kappa^{-5} \right. \notag\\
         & \phantom{+12 N^3 (}\left. +1041198895\kappa^{-4}-954191664\kappa^{-3}+712745500\kappa^{-2}-390187530\kappa^{-1}+117193185\right)
\notag\\
         &  -3 N \left(-11486475\kappa^{-9}+39064395\kappa^{-8}-73183450\kappa^{-7}+101351398\kappa^{-6}-116492293\kappa^{-5} \right. \notag\\
         & \phantom{+12 N^3 (}\left. +116492293\kappa^{-4}-101351398\kappa^{-3}+73183450\kappa^{-2}-39064395\kappa^{-1}+11486475\right) ,
\notag
\end{align}
\begin{align}
m_{20} = & 16796 N^{11}-431910 N^{10} (-\kappa^{-1}+1)+10 N^9 \left(523069\kappa^{-2}-907571\kappa^{-1}+523069\right)
\label{moment20} \\
         &  -15 N^8 \left(-2605750\kappa^{-3}+5906423\kappa^{-2}-5906423\kappa^{-1}+2605750\right)
\notag\\
         &  +8 N^7 \left(24778268\kappa^{-4}-65615565\kappa^{-3}+85554470\kappa^{-2}-65615565\kappa^{-1}+24778268\right)
\notag\\
         &  -70 N^6 \left(-10102057\kappa^{-5}+29519110\kappa^{-4}-44811613\kappa^{-3}+44811613\kappa^{-2}-29519110\kappa^{-1}+10102057\right)
\notag\\
         &  +2 N^5 \left(890196239\kappa^{-6}-2777967945\kappa^{-5} \right. \notag\\
         & \phantom{+2 N^5 (}\left. +4632873326\kappa^{-4}-5384661375\kappa^{-3}+4632873326\kappa^{-2}-2777967945\kappa^{-1}+890196239\right)
\notag\\
         &  -5 N^4 \left(-618257450\kappa^{-7}+2019452031\kappa^{-6}-3579106742\kappa^{-5} \right. \notag\\
         & \phantom{-5 N^4 (}\left. +4556290742\kappa^{-4}-4556290742\kappa^{-3}+3579106742\kappa^{-2}-2019452031\kappa^{-1}+618257450\right)
\notag\\
         &  +2 N^3 \left(1750159371\kappa^{-8}-5906104210\kappa^{-7}+10901709075\kappa^{-6}-14692250235\kappa^{-5} \right. \notag\\
         & \phantom{+12 N^3 (}\left. +16068813521\kappa^{-4}-14692250235\kappa^{-3}+10901709075\kappa^{-2}-5906104210\kappa^{-1}+1750159371\right)
\notag\\
         &  -5 N^2 \left(-460192905\kappa^{-9}+1590096591\kappa^{-8}-3017610500\kappa^{-7}+4217705240\kappa^{-6}-4871156831\kappa^{-5} \right. \notag\\
         & \phantom{-5 N^2 (}\left. +4871156831\kappa^{-4}-4217705240\kappa^{-3}+3017610500\kappa^{-2}-1590096591\kappa^{-1}+460192905\right)
\notag\\
         &  +3 N \left(218243025\kappa^{-10}-766988175\kappa^{-9}+1483388071\kappa^{-8}-2122377110\kappa^{-7}+2533991909\kappa^{-6} \right. \notag\\
         & \phantom{+3 N (}\left. -2672675165\kappa^{-5}+2533991909\kappa^{-4}-2122377110\kappa^{-3}+1483388071\kappa^{-2}-766988175\kappa^{-1}+218243025\right) .
\notag
\end{align}

\bibliographystyle{plain}
\bibliography{moment,random_matrices,nonlinear,CA}

\end{document}